\newtheorem{thm}{Theorem}[section]
\newtheorem{prop}[thm]{Proposition}
\newtheorem{lem}[thm]{Lemma}
\newtheorem{cor}[thm]{Corollary}
\theoremstyle{definition}
\newtheorem{dfn}[thm]{Definition}
\theoremstyle{remark}
\newtheorem{rmk}[thm]{Remark}
\def\bal#1\eal{\begin{align}#1\end{align}}              
\def\baln#1\ealn{\begin{align*}#1\end{align*}}          
\def\bml#1\eml{\begin{multline}#1\end{multline}}        
\def\bmln#1\emln{\begin{multline*}#1\end{multline*}}  
\def\bga#1\ega{\begin{gather}#1\end{gather}}
\def\bgan#1\egan{\begin{gather*}#1\end{gather*}}
\newcommand{\R}{\ensuremath{\mathbb R}\xspace}
\newcommand{\eps}{\ensuremath{\epsilon}\xspace}
\newcommand{\beq}{\begin{equation}}
\newcommand{\eeq}{\end{equation}}
\newcommand{\bere}{\begin{rmk}}
\newcommand{\ere}{\end{rmk}}
\title[On Finsler spacetimes with a timelike Killing vector field]{On Finsler spacetimes with a timelike \\ Killing vector field} %and \\ some of their causality properties}
\author[E. Caponio]{Erasmo Caponio}
\address{Dipartimento di Meccanica, Matematica e Management, \hfill\break\indent
Politecnico di Bari, Via Orabona 4, 70125, Bari, Italy}
\email{caponio@poliba.it}
\author[G. Stancarone]{Giuseppe Stancarone}
\address{Dipartimento di Matematica\hfill\break\indent Universit\`a degli Studi di Bari, 
Via  Orabona 4, 70125 Bari, Italy}
\email{giuseppe.stancarone@uniba.it}
\keywords{Finsler spacetime, stationary spacetime, causality}
\begin{document}
\begin{abstract}
We study Finsler spacetimes and Killing vector fields taking  care of the fact that the generalised metric tensor associated to the Lorentz-Finsler function $L$ is in general well defined only on a subset of  the slit tangent bundle. We then  introduce a new class of  Finsler spacetimes endowed with a timelike Killing vector field that we call {\em stationary splitting Finsler spacetimes}. We  characterize when a Finsler spacetime with a timelike Killing vector field is locally a stationary splitting. Finally, we show that  the  causal structure of a stationary splitting is  the same of one of two Finslerian static spacetimes naturally associated to the stationary splitting.   
\end{abstract}
\maketitle
\section{Introduction}
The main feature of Finsler geometry is that the associated {\em generalised} metric tensor $\tilde g$ (also called {\em fundamental tensor})  has a dependence on  the directions. More precisely,  at any point $p$ of a spacetime $\tilde M$ there are infinitely many scalar products $\tilde g_v$, one for each direction $v$ where $\tilde g$ is defined.  
In this respect, a gravitation theory based on Finsler geometry is a metric one and it can  include, as its  isotropic case,   general relativity.  There have indeed been several works in relativistic physics where   Finsler geometry is used.\footnote{For applications of Finsler geometry  to non-relativistic physics and  biology we recommend  \cite{AnInMa93}.} We recall here the pioneering work  of  G. Randers \cite{Rander41} about asymmetry of time intervals, where it is introduced a class of Finsler metrics,   nowadays called Randers metrics, together with  its connection  with 5-dimensional Kaluza-Klein theory; the work of G. Y. Bogoslovsky on  Lorentz symmetry violation  (see e.g. \cite{Bogosl77a, Bogosl77b, Bogosl94}) and  the finding by G.W. Gibbons et al. \cite{GiGoPo07} that general very special relativity is the group of transformations that leave invariant  a Finsler metric introduced by Bogoslovsky; the work of H.E. Brandt about maximal proper acceleration (see e.g. \cite{Brandt1999}) where generalised metric on the tangent bundle of the spacetime are used; the extension of the Fermat's principle to Finsler spacetime by V. Perlick \cite{perlick06}, motivated by optics in  anisotropic non-dispersive media; the applications to quantum gravity by F. Girelli et al. \cite{GiLiSi2007}.  More recently, mathematical models where Finsler geometry replaces Lorentzian one have been considered in  gravitation, see e.g. \cite{amir, Bar12, FusPab16, KoStSt12, LiChang, Minguz15b, PfeWol11},  cosmology \cite{HohPfe17, papa, stavac,vacaru12}  and in the so-called Standard Model Extension (see e.g. \cite{Colladay, Koste, Russell, Russel15, shreck}).

In this work we study  Finsler spacetimes endowed with a timelike Killing vector  field  and we introduce  a new class of Finsler spacetimes that  can be viewed  as a Finslerian extension  of the class of the   standard stationary Lorentzian manifolds (see, e.g., \cite{mas, FoGiMa95, JavS08}).

A Finsler spacetime is here defined as a smooth, connected, paracompact manifold $\tilde M$ of dimension $n+1$, $n\geq 1$, endowed with a generalised  metric tensor $\tilde g$,  defined on an open subset $A\subset T\tilde M$, having  index $1$ for each $v\in A$    and which is the Hessian w.r.t. to the velocities of a Lorentz-Finsler function $L$ (see Definition~\ref{fst} below). This is a function 
$L:T\tilde M\rightarrow\mathbb{R}$ which is positively homogeneous of degree $2$ in the velocities, i.e. $L(p,\lambda v)=\lambda^{2}L(p,v)$, $\forall\lambda>0$.
The domain $A$ where $\tilde g$ is well defined and has index $1$  is in general, a smooth,  cone subset of $T\tilde M\setminus 0$, where $0$ denotes the zero section in $T\tilde M$.  By ``smooth cone subset'' we mean that  $\tilde\pi(A)=\tilde M$, where $\tilde\pi:T\tilde M\rightarrow \tilde M$ is the natural projection from the tangent bundle $T\tilde M$ to $\tilde M$, and, for every $p\in M$, $A_p:=A\cap T_p\tilde M$ is an open linear cone (without the vertex $\{0\}$) of the tangent space $T_{p}\tilde M$,  i.e. if $v\in A_p$ then $\lambda v\in A_p$ for each $\Lambda>0$. Moreover $A_p$ varies smoothly with $p\in M$ in the sense that $A_p$ is defined by the union of the solutions of a finite number of of systems of inequalities  
\[
\begin{cases}E_{1,k}(p,v)>0\\
\ldots\\
E_{m_k,k}(p,v)>0\end{cases}\]
where, for each $k\in\{1,\ldots,l\}$, $E_{1,k}, \ldots, E_{m_k,k}\colon T\tilde M\to \R$ are $m_k$ smooth functions on $T\tilde M$, positively homogeneous of degree $1$ in $v$.  

In our paper, below Section~\ref{kvf},    $A_p$ will be equal to $T^+_p\tilde M\setminus \mathcal T_p$ or, in some cases, to  $T_p\tilde M\setminus \mathcal T_p$, where $T^+_p\tilde M$ is a half-space in  $T_p\tilde M$ whose boundary is an hyperplane passing through  $\{0\}$ and $\mathcal T_p$ is a one-dimensional  subspace intersecting $T^+_p\tilde M$. We will denote the set $A=\cup_{p\in\tilde M}A_p$ by $T^+\tilde M\setminus \mathcal T$ in the former case and with $T\tilde M\setminus \mathcal T$ in the latter. Indeed, the cone subsets $T^+_p\tilde M\setminus \mathcal T_p$ and $T\tilde M\setminus \mathcal T$ are the  natural candidate domains for a generalised metric tensor $\tilde g$ if one asks for  a Lorentz-Finsler function $L$ on a product manifold $\tilde M=\R\times M$ such that, on $TM$,  $L$ reduces to the square of a classical Finsler metric. In fact, $L$ cannot be smoothly extended %(where smooth here means  at least $C^2$) 
to  vectors which project on $0$ in  $TM$, because  the square of a classical Finsler metric is not twice differentiable  on zero vectors. 

More generally, as suggested in \cite{LPH},  $L$ could be  smooth only on $T\tilde M\setminus \mathcal Z$ where $\mathcal Z$ is  a zero measure subset in $T\tilde M$.  It is worth to recall that it would be possible  to define $L$ on a cone subset $A$ where $L$  is  negative and, at each point $p\in \tilde M$, $A_p$ is a  convex salient cone  and $L$ is  extendible and smooth on a  cone subset around the set of lightlike vectors $\{v\in T\tilde M\setminus 0:L(v)=0\}$ which defines the boundary of $A$ in $T\tilde M\setminus 0$,  \cite{amir} ($A$ is then the cone subset of the Finslerian future pointing timelike vectors).

Let  us now give some further details about the generalised metric tensor $\tilde g$. Let $A\subset T\tilde M$ be a cone subset as above and
%$\mathcal T \subset T\tilde M$ be a smooth real  line vector bundle on $\tilde M$ and $\mathcal T_p$ the fibre of $\mathcal T$ over $p\in \tilde M$. 
let $\pi\colon A\to \tilde M$ be the restriction of the canonical projection, $\tilde \pi:T\tilde M\to \tilde M$, to  $A$. Moreover, let $\pi^*(T^*\tilde M)$ the pull-back cotangent bundle over $A$. We consider the tensor product bundle $\pi^*(T^*\tilde M)\otimes\pi^*(T^*\tilde M)$  over $A$ and a section $\tilde g:v\in A\mapsto \tilde g_v\in T^*_{\pi(v)}\tilde M\otimes T^*_{\pi(v)}\tilde M$. We say that $\tilde  g$ is {\em symmetric} if $\tilde g_v$ is symmetric for all $v\in A$. Analogously, $\tilde g$ is said {\em non-degenerate} if $\tilde g_v$ is non-degenerate  for each $v\in A$ and its index will be the common index of the symmetric bilinear forms $\tilde g_v$; moreover,  $\tilde g$ will be said homogeneous if, for all $\lambda>0$ and $v\in A$,  $\tilde g_{\lambda v}=\tilde g_{v}$. A a smooth, symmetric, homogeneous, non-degenerate   section $\tilde g$ of the tensor bundle $\pi^*(T^*\tilde M)\otimes \pi^*(T^*\tilde M)$ over $A$ will be said a {\em generalised metric tensor}.
\begin{dfn}\label{fst}
A {\em Finsler spacetime} is a smooth $(n+1)$-dimensional manifold $\tilde M$, $n\geq 1$, endowed with  a generalised metric tensor $\tilde g$, defined on a (maximal) cone subset $A\subset T\tilde M\setminus 0$,  such that  $\tilde g_{(p,v)}$ has index $1$, for each $(p,v)\in A$, and it is the fiberwise Hessian  of a {\em Lorentz-Finsler function} $L$:

(i) $L:T\tilde M \rightarrow\mathbb{R}$,\quad  $L\in C^0(T\tilde M)\cap C^3(A)$, 

(ii) $L(p,\lambda v)=\lambda L(p, v)$, for all $(p,v)\in T\tilde M$, 

(iii)
\begin{equation}\label{tildeg}
\tilde g_{(p,v)}(u_1,u_2):=\frac 1 2 \frac{ \partial^{2}L}{\partial s_1\partial s_2}(p,v+s_1u_1+s_2u_2)|_{(s_1,s_2)=(0,0)},
\end{equation}
for all $(p,v)\in A$. Moreover, there exists a smooth vector field $Y$ such that  $Y_p\in  \bar{A_p}$ and $L(p,Y_p)<0$, for all $p\in \tilde M$, where $\bar{A_p}$ is the closure of $A_p$ in $T_p\tilde M\setminus \{0\}$. 

We denote a Finsler spacetime by  $(\tilde M, L)$; in some circumstances,  to emphasize that  $\tilde g$ is defined and has index $1$ only on $A\subset T\tilde M\setminus 0$, we denote it by $(\tilde M,L,A)$.      
\end{dfn}
\bere\label{good}
Observe  that, 
whenever $\tilde g$ is defined on  $T\tilde M\setminus 0$, Definition~\ref{fst} coincides with that in \cite[Def. 3]{Minguzzi}.
We emphasize that $A$ has to be intended as the maximal open domain in $T\tilde M\setminus 0$ where $\tilde g$ is well defined and has index $1$. We do not assume a priori that the connected component of $\bar A_p$ that contains $Y_p$  is convex and that  all the lightlike vectors ($v\in T\tilde M\setminus 0$, such that $L(v)=0$) in such a component belong also to $A_p$; anyway both these properties should hold for obtaining reasonable  local and global causality properties  (see \cite{Minguzzi, amir, Minguz15b}) and indeed they are satisfied by the class of stationary  Finsler spacetimes that we introduce  below. On the other hand,
some Finslerian models do not satisfy the above second  requirement: for example, in (deformed) Very Special Relativity minus the square of the line element  (see \cite{GiGoPo07, FusPab16}) is given by
\[L(v):=-(-g(v,v))^{1-b}(-\omega(v))^{2b},\]
 where $g$ is a Lorentzian metric on $\tilde M$ admitting a global smooth timelike vector field $Y$, which gives to $(\tilde M,g)$ a time orientation,  and $\omega$ is a one-form on $\tilde M$ which is  equivalent, w.r.t the metric $g$, to a future-pointing lightlike vector $u$. According to the value of the parameter $b\neq 1$, the  fundamental tensor $\tilde g$ of $L$ is   not defined or vanishing at  $u$, which is also lightlike for $L$, while for all  the timelike future-pointing vectors $v$ of $g$, we have that $L(v)<0$. 
\ere
\bere
We could allow more generality by not prescribing the existence of  a Lorentz-Finsler function.
This is a quite popular generalisation of classical Finsler geometry, see, e.g.,   \cite[\S 3.4.2]{AnInMa93} or \cite{Lovas04,MeSzTo03}, and the references therein, where   such  structures  are indeed called  generalised metrics. 
Anyway as observed above and in \cite[Remark 2.11]{CapSta16} the existence of  a {\em good} (in the sense explained in Remark~\ref{good}) Lorentz-Finsler function avoids the occurrence of some causality issues.
\ere  
Henceforth we will omit the dependence from the point on the manifold $\tilde M$, writing simply  $L(v)$, $\tilde g_{v}$, etc., unless to reintroduce it in necessary cases (as in the statement of Theorem~\ref{charstat} where we use the notation $L(z,\cdot)$ to denote the map from $T_z\tilde M\to \R$ obtained from $L$ by fixing $z\in \tilde M$).

%\begin{rmk}\label{sign}
%The requirement about the sign of $\tilde g_w(w,w)$ for $w$ in a neighborhood of $\mathcal T$ could be weakened by allowing the existence of an open subset $\tilde M_l\subset \tilde M$ where the reverse inequality holds for any $w$ in a punctured neighborhood of $\mathcal T_x$, $x\in \tilde M_l$ (and then the existence of a ``critical region'' where, in each punctured neighborhood of $\mathcal T$, there exist vectors $w_1$ and $w_2$ such that $\tilde g_{w_1}(w_1,w_1)<0$ and $\tilde g_{w_2}(w_2,w_2)>0$). This, for example,  would be the case for a Finslerian modification of a class of spacetime called {\em $\mathrm{SSTK}$ splitting}, studied in \cite{CJS2}.
%\end{rmk}

\section{Killing vector fields}\label{kvf}
%In a semi-Riemannian manifold, Killing vector fields are the infinitesimal generators of local isometries. 
% For example  the Poincare group's transformations are the isometries of Minkowski spacetime, that is the generators of Lorentz rotations and rigid translations are Killing vectors for Minkowski metric. In General Relativity, a timelike Killing vector field allows us to choose a chart with the temporal coordinate aligned to it getting a simplified Killing condition on the chart. 
In this section we extend the notion of  Killing vector field to Finsler spacetimes following the approach of \cite{Lovas04}, with the difference  that the base space in our setting is the open subset  $A\subset T\tilde M$, while in \cite{Lovas04} it is the standard one for Finsler geometry, i.e. the slit tangent bundle. We will only consider  Killing vector fields that are vector fields on $\tilde M$  by passing to their complete lifts on $T\tilde M$ and then restricting them to the open base space $A$. Clearly a more general approach is possible by considering {\em generalised vector field}, i.e. sections of $\pi^*(T\tilde M)$, as in \cite{OoYaIs17}. Another interesting approach is given in \cite[\S 2.9]{java}.  
%and comparing it with the one in \cite{Anisotropic}. 

Let us give some preliminary notions.
%considering the violation of equivalence principle in Finslerian setting.
Let $f\colon \tilde M\to \R$ be a smooth function. The {\em complete lift of $f$ on $T\tilde M$} is the function $f^c$ defined as $f^{c}(v):=v(f)$ for any $v\in T\tilde M$. 
Let now $X$ be a  vector field on $\tilde M$ and set $X^c$ the \emph{complete lift of $X$ to $T\tilde M$}, defined by   
\baln 
&X^c(f \circ \pi):=X(f),\text{ for all smooth functions $f$ on $\tilde M$,}\\
&X^c(f^c):=(X(f))^c.
\ealn
Observe that  if $(x^0,\ldots,x^n)$ are local coordinates on $\tilde M$ and $(x^0,\ldots, x^n,y^0,\ldots, y^n)$  are the induced ones on  $T\tilde M$ (by an abuse of notation we denote the induced coordinate $x^i\circ\tilde \pi$ again by  $x^i$), then $(x^i)^c=y^i$, for all $i=0,\ldots, n$; so it is easy to check that  in local coordinates $(X^c)_{(x,y)}$ is given by
\beq\label{completelift}X^{h}(x)\frac{\partial }{\partial x^{h}}+\frac{\partial X^{h}}{\partial x^{i}}(x)y^{i}\frac{\partial}{\partial y^{h}},\eeq
where we have used the Einstein summation convention; here   $(x,y)\in T\tilde M$ has coordinates $(x^0,\ldots,x^{n},y^0,\ldots, y^{n})$,  and $X^h(x)$, $h=0,\ldots, n$,  are the components of $X$ w.r.t.  $\left(\frac{\partial}{\partial x^h}\right)_{h\in\{0,\ldots,n\}}$.

\begin{rmk}\label{restrictions}
	It is worth to observe that complete lifts on $A$ are well defined by   restricting  functions and fields to the open subset $A$ and, in the following, we will  consider such restrictions and we will denote them, with an abuse of notation, always by $f^c$ and $X^c$.
\end{rmk}
The canonical vertical bundle map between $\tilde \pi^*(T\tilde M)$ and $T(T\tilde M)$ induces  an injective  bundle map $i:\pi^*(T\tilde M)\rightarrow T(A)$;  in local coordinate $(x^i,y^i)$ of $T\tilde M$, if $z_y= z^{i}(x,y)\frac{\partial}{\partial x^i}|_{(x,y)}$, $(x,y)\in A$, it holds
\begin{equation*}
i(z_y)=z^i(x,y)\frac{\partial}{\partial y^i}|_{(x,y)}.
\end{equation*}
Observe that the map $i$ induces also a injective homomorphism between $\mathfrak{X}(\pi)$ and $\mathfrak{X}(A)$, denoted always by $i$, where $\mathfrak{X}(\pi)$ and $\mathfrak{X}(A)$ are the sets of smooth sections of $\pi^*(T\tilde M)$ (over $A$)  and, respectively, of $T(A)$. In analogous way,  a map $j:T(A)\rightarrow\pi^{*}(T\tilde M)$ can be defined as $j(w):=d\pi_y(w)$, for every $w\in T_{y}A$. Observe that $ i(\pi^*(T\tilde M))=\ker j$ and we have the following exact sequence 
\begin{equation*}
0\rightarrow\pi^*(T\tilde M)\xrightarrow{i}T(A)\xrightarrow{j}\pi^{*}(T\tilde M)\rightarrow 0.
\end{equation*}
Thus another homomorphism between $\mathfrak{X}(A)$ and $\mathfrak{X}(\pi)$, denoted always by $j$, is defined and it holds the following exact sequence 
\begin{equation*}
0\rightarrow\mathfrak{X}(\pi)\xrightarrow{i}\mathfrak{X}(A)\xrightarrow{j}\mathfrak{X}(\pi)\rightarrow 0.
\end{equation*}
The \emph{vertical vectors fields } are the elements of $ i(\mathfrak X(\pi))$.
We define the \emph{Lie derivative} relative to any smooth vector field $X$ on $\tilde M$ on the tensor product bundles  of the pull-back bundles $\pi^*(T\tilde M)$ and $\pi^*(T^*\tilde M)$ over $A$ such that:
\begin{equation}\label{LD}
\mathcal{L}_{X}f:=X^c(f),\quad\mathcal{L}_{X} Y:=i^{-1}([X^c,i(Y)]),
\end{equation}
for any smooth function $f$ on  $A$ and any $Y\in \mathfrak X(\pi)$, where  $[\cdot, \cdot]$ is  the Lie bracket on $A$ (recall Remark~\ref{restrictions}).
Then $\mathcal L_X$ is extended to any section of  the tensor product bundles  of the pull-back bundles $\pi^*(T\tilde M)$ and $\pi^*(T^*\tilde M)$ over $A$ by the 
 generalised Willmore's theorem for tensor derivations (see, e.g. 
\cite[\S 1.32]{Szilas03}). Observe that the second equation in (\ref{LD}) is well posed, namely $[X^c,i( Y)]$ is vertical. In fact, it is almost immediate to see that the Lie bracket of any vector field $X^c$ and any vertical vector field is vertical; in local coordinates $(x^i,y^i)$ of $T\tilde M$, if $Y=Y^k(x,y)\frac{\partial}{\partial x^k}$ and $X=X^h(x)\frac{\partial}{\partial x^h}$, we have indeed 
\begin{equation}\label{vLie}
[X^c,i(Y)]=\left(X^h\frac{\partial Y^k}{\partial x^h}+\frac{\partial X^h}{\partial x^i}y^i\frac{\partial Y^k}{\partial y^h}-Y^{h}\frac{\partial X^k}{\partial x^h}\right)\frac{\partial}{\partial y^k}.
\end{equation}
The Lie derivative $\mathcal L_X$ on $\pi^*(T^*\tilde M)\otimes\pi^*(T^*\tilde M)$ is then, 
\begin{equation}\label{Lie}
\mathcal L_{X}\tilde g(Y,Z):=X^c(\tilde g(Y,Z))-\tilde g(\mathcal L_X Y,Z)-\tilde g(Y,\mathcal L_X Z),
\end{equation}
for any $\tilde g\in \pi^*(T^*\tilde M)\otimes\pi^*(T^*\tilde M)$ and for every $Y, Z\in \mathfrak{X}(\pi)$.
Observe that in a local base $\left(\widehat{\frac{\partial }{\partial x^0}},\ldots,\widehat{\frac{\partial }{\partial x^n}}\right)$ of $\mathfrak X(\pi)$,
$\widehat{\frac{\partial }{\partial x^i}}:=\frac{\partial }{\partial x^i}\circ\pi$, for each $i\in\{0,\ldots,n\}$,  we have: 
\begin{eqnarray}\label{jjj}
\nonumber\mathcal L_{X}\tilde g\left(\widehat{\frac{\partial}{\partial x^l}},\widehat{\frac{\partial}{\partial x^j}}\right)&=&\nonumber X^{c}(\tilde g_{lj})-\tilde g\left(\mathcal L_X\widehat{\frac{\partial}{\partial x^l}},\widehat{\frac{\partial}{\partial x^j}}\right)-\tilde g\left(\widehat{\frac{\partial}{\partial x^l}},\mathcal L_{X}\widehat{\frac{\partial}{\partial x^j}}\right)\\&=&X^{c}(\tilde g_{lj})+\tilde g\left(\frac{\partial X^h}{\partial x^l}\widehat{\frac{\partial}{\partial x^h}},\widehat{\frac{\partial}{\partial x^j}}\right)+\tilde g\left(\widehat{\frac{\partial}{\partial x^l}},\frac{\partial X^h}{\partial x^j}\widehat{\frac{\partial}{\partial x^h}}\right)\nonumber\\&=& X^{c}(\tilde g_{lj})+\frac{\partial X^h}{\partial x^l}\tilde g_{hj}+\frac{\partial X^h}{\partial x^j}\tilde g_{lh},
\end{eqnarray}
where $\tilde g_{ij}:=\tilde g\left(\widehat{\frac{\partial }{\partial x^i}},\widehat{\frac{\partial }{\partial x^j}}\right)$, for all $i,j\in \{0,\ldots,n\}$; here, in the second equality, we have used (\ref{vLie}) and the fact that  $i(\widehat{\frac{\partial }{\partial x^i}})=\frac{\partial}{\partial y^i}$. 
\begin{dfn}
Let $(\tilde M, L, A)$ be a Finsler spacetime, $K$ be a smooth vector field on $\tilde M$ and $\psi$ its flow. We say that $K$ is a {\em Killing vector field of $(\tilde M,L,A)$} if $\mathcal L_K\tilde g=0$. 
\end{dfn}
The following characterization of Killing vector fields holds:
\begin{prop}\label{Linvariant}
	Let $(\tilde M, L, A)$ be a Finsler spacetime (hence $L\in C^0(T\tilde M)\cap C^3(A)$, according to Definition~\ref{fst}), then   $K$ is a Killing vector field, if and only if  $K^c(L)|_A=0$. 
\end{prop}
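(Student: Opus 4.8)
The plan is to reduce the statement to a pointwise identity in local coordinates, namely that $\mathcal L_K\tilde g$ is exactly the fiberwise Hessian, in the sense of \eqref{tildeg}, of the function $K^c(L)|_A$; the equivalence then follows at once, together with Euler's theorem on positively homogeneous functions. Concretely, I would fix local coordinates $(x^i,y^i)$ on $T\tilde M$ as in the text, recall that on $A$ one has $\tilde g_{ij}=\frac12\frac{\partial^2 L}{\partial y^i\partial y^j}$, and compute the iterated vertical derivative $\frac12\frac{\partial^2}{\partial y^l\partial y^j}\bigl(K^c(L)\bigr)$ using the coordinate expression \eqref{completelift} of $K^c$. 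Since the components $K^h$ depend only on $x$ and the coordinate fields $\frac{\partial}{\partial x^i}$, $\frac{\partial}{\partial y^i}$ commute, the differentiation yields four terms: the ``horizontal'' term $K^h\frac{\partial\tilde g_{lj}}{\partial x^h}$; the two terms $\frac{\partial K^h}{\partial x^l}\tilde g_{hj}$ and $\frac{\partial K^h}{\partial x^j}\tilde g_{lh}$, arising when a vertical derivative hits the factor $y^i$ in \eqref{completelift}; and a third-order term equal to $\frac{\partial K^h}{\partial x^i}y^i\cdot\frac12\frac{\partial^3 L}{\partial y^h\partial y^l\partial y^j}=\frac{\partial K^h}{\partial x^i}y^i\frac{\partial\tilde g_{lj}}{\partial y^h}$. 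The first and the fourth term together are precisely $K^c(\tilde g_{lj})=K^h\frac{\partial\tilde g_{lj}}{\partial x^h}+\frac{\partial K^h}{\partial x^i}y^i\frac{\partial\tilde g_{lj}}{\partial y^h}$, so comparing with \eqref{jjj} gives
\begin{equation*}
\frac12\frac{\partial^2}{\partial y^l\partial y^j}\bigl(K^c(L)\bigr)\Big|_A=\mathcal L_K\tilde g\left(\widehat{\frac{\partial}{\partial x^l}},\widehat{\frac{\partial}{\partial x^j}}\right),\qquad l,j\in\{0,\dots,n\}.
\end{equation*}
As both $\mathcal L_K\tilde g$ and the function $K^c(L)$ are globally defined on $A$, it suffices to carry out this check in a chart around each point of $A$; thus $\mathcal L_K\tilde g$ is the fiberwise Hessian of $K^c(L)|_A$.

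With this identity the two implications are short. If $K^c(L)|_A=0$, its fiberwise Hessian vanishes, hence $\mathcal L_K\tilde g=0$, i.e.\ $K$ is Killing. Conversely, assume $\mathcal L_K\tilde g=0$, so that the fiberwise Hessian of $h:=K^c(L)|_A$ vanishes on $A$. Since $L$ is positively homogeneous of degree two in the velocities, so is $h$ (indeed $K^h\frac{\partial L}{\partial x^h}$ is $2$-homogeneous in $y$, and so is $\frac{\partial K^h}{\partial x^i}y^i\frac{\partial L}{\partial y^h}$, being the product of a $1$-homogeneous and a $1$-homogeneous factor), and $h\in C^2(A)$ because $L\in C^3(A)$. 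Euler's theorem then gives, at every $(x,y)\in A$, $h(x,y)=\frac12\frac{\partial^2 h}{\partial y^i\partial y^j}(x,y)\,y^iy^j$, and the right-hand side vanishes, being a contraction of the (vanishing) fiberwise Hessian of $h$; hence $K^c(L)|_A=0$. No convexity or connectedness of $A_p$ is needed here, since Euler's identity is used pointwise along the ray through $(x,y)$, which stays in $A_p$ because $A_p$ is a cone.

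The main obstacle is the bookkeeping in the computation of $\frac12\frac{\partial^2}{\partial y^l\partial y^j}(K^c(L))$: one must recognize that the third-order term $\frac{\partial K^h}{\partial x^i}y^i\frac{\partial^3 L}{\partial y^h\partial y^l\partial y^j}$, which at first sight might seem to obstruct the identity, is exactly the vertical part of $K^c(\tilde g_{lj})$, so that it recombines with the horizontal part to rebuild $K^c(\tilde g_{lj})$ and match \eqref{jjj} term by term. Everything else is routine: the commutation of $\partial/\partial x$ and $\partial/\partial y$, the fact that $K^h$ depends only on $x$, and a single application of Euler's theorem.
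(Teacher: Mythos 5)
Your proposal is correct, and the route is genuinely different from the paper's in the nontrivial direction, so a comparison is worthwhile. The paper only establishes the contracted identity $K^{c}(L)(x,y)=\bigl(\mathcal L_{K}\tilde g\bigr)_{(x,y)}(y,y)$ on $A$, which settles ``Killing $\Rightarrow K^c(L)|_A=0$'' at once; but this identity cannot simply be inverted for the converse, since the quadratic form is only evaluated at the base direction $y$ itself, so the paper argues separately: near points where $K\neq 0$ it takes adapted coordinates with $K=\partial_{x^0}$, reads $\partial_{x^0}\tilde g_{ij}=0$ off $\partial_{x^0}L=0$ by differentiating twice in $y$, and then concludes via \eqref{jjj} together with a continuity argument on the closure of $\{K\neq 0\}$ and the trivial case on open sets where $K$ vanishes. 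You instead prove the stronger, uncontracted statement that $(\mathcal L_K\tilde g)_{lj}=\tfrac12\,\partial^2_{y^ly^j}\bigl(K^c(L)\bigr)$ on $A$ (your bookkeeping is right: the two $\delta$-terms produced when a vertical derivative hits the factor $y^i$ in \eqref{completelift} give $\partial_{x^l}K^h\tilde g_{hj}+\partial_{x^j}K^h\tilde g_{lh}$, and the third-order term recombines with the horizontal one into $K^c(\tilde g_{lj})$, matching \eqref{jjj}). This makes ``$K^c(L)|_A=0\Rightarrow$ Killing'' immediate, with no adapted coordinates and no density argument at zeros of $K$, and you recover the other implication from Euler's theorem applied to the fiberwise $2$-homogeneous, $C^2$ function $K^c(L)$ (equivalently, contracting your identity with $y^ly^j$ reproduces the paper's formula). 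Both arguments use only $L\in C^3(A)$ and the fact that each $A_p$ is an open cone; yours buys uniformity, since both implications flow from a single pointwise identity, at the cost of slightly heavier vertical differentiation, while the paper's is computationally lighter in the forward direction but needs the extra coordinate and continuity step for the converse.
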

\begin{proof}
Observe that from \eqref{completelift} we have 
\begin{eqnarray*}
K^{c}(L)(x,y)&=&K^{h}(x)\frac{\partial L}{\partial x^{h}}(x,y)+\frac{\partial K^{h}}{\partial x^{i}}(x)y^{i}\frac{\partial L}{\partial y^{h}}(x,y)\\&=&K^{h}(x)\frac{\partial }{\partial x^{h}}\big(\tilde g_{lj}(x,y)y^l y^j\big)+\frac{\partial K^{h}}{\partial x^{i}}(x)y^{i}\frac{\partial}{\partial y^{h}}\big(\tilde g_{lj}(x,y)y^l y^j\big)\\&=&\left(K^{c}(\tilde g_{lj})(x,y)+\frac{\partial K^h}{\partial x^l}(x)\tilde g_{hj}(x,y)+\frac{\partial K^h}{\partial x^j}(x)\tilde g_{lh}(x,y)
\right)y^{l}y^j
\end{eqnarray*}
for every $(x,y)\in A$.
Thus, if  $K$ is Killing, by (\ref{jjj}),  $K^{c}(L)(x,y)= \big(\mathcal L_{K}\tilde g\big)_{(x,y)}(y,y)=0$, for every $(x,y)\in A$. 
%Then  by continuity $K^c(L)(x,y)=0$ for every $(x,y)\in A$. 

Let us now assume that $K^c(L)|_A=0$. Observe that if there exists an open subset $U$ of $\tilde M$ where $K$ vanishes then also $K^c|_U\equiv 0$ and, from \eqref{jjj}, $(\mathcal L_K\tilde g)_{(x,y)}=0$ for all $(x,y)\in TU\cap A$.    Let, then, $p\in \tilde M$ such that $K_p\neq 0$ and let us consider, in a neighborhood $V\subset \tilde M$ of $p$, a coordinate system $(x^0,\ldots, x^n)$ such that $\frac{\partial}{\partial x^0}=K$. In the induced  coordinates on $T\tilde M$  we have that  $K^c=\frac{\partial}{\partial x^0}\circ\tilde{\pi}$  (recall \eqref{completelift}) and, so, $K^c(L)(x,y)=\frac{\partial L}{\partial x^0}(x,y)=0$ for all $(x,y)\in TV\cap A$. Hence,
\[\frac{\partial^3 L}{\partial y^i\partial y^j \partial x^0}(x,y)=\frac{\partial^3 L}{\partial x^0\partial y^i\partial y^j }(x,y)=2\frac{\partial\tilde g_{ij}}{\partial x^0}(x,y)=0,\] 
for all $(x,y)\in TV\cap A$ and for all $i,j\in \{0,\ldots,n\}$; from \eqref{jjj}, $(\mathcal L_K\tilde g)_{(x,y)}=0$ for each $(x,y)\in TV\cap A$. Thus, $(\mathcal L_K\tilde g)_{(p,y)}=0$ for any  $(p,y)\in A$ such that $K_p\neq 0$. By continuity, $(\mathcal L_K\tilde g)_{(q,y)}=0$ for any $(q,y)\in A$ such that $q$ belongs to the closure of $\{p\in \tilde M:K_p\neq 0\}$ and then $\mathcal L_K\tilde g=0$ everywhere in $A$. 
 \end{proof}
Let us now see  how the flow of $K^c$ behaves w.r.t. $A$.   
\begin{lem}\label{Ainvariant}
Let $\tilde M$  be a manifold and $A$ a cone subset of $T\tilde M$ (according to the definition in the Introduction). Let $X$ be a smooth vector field on $\tilde M$. Then for each $\bar p\in \tilde M$ there exists an interval  $I_{\bar p}$, $0\in I_{\bar p}$, and a neighborhood $U$ of $\bar p$ in $\tilde M$ such that the flow $\tilde \psi$ of $X^c$ is well defined on $I_{\bar p}\times TU$ and $\tilde \psi\big(I_{\bar p}\times (TU\cap A)\big)\subset A$.   
\end{lem}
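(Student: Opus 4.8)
The plan is to identify the flow $\tilde\psi$ of $X^c$ with the differential of the local flow $\varphi$ of $X$ on $\tilde M$, and then to deduce the invariance $\tilde\psi(I_{\bar p}\times(TU\cap A))\subset A$ from the homogeneity of the functions cutting out $A$ together with a compactness argument. Reading off \eqref{completelift}, an integral curve $t\mapsto(x(t),y(t))$ of $X^c$ satisfies $\dot x^h=X^h(x)$ and $\dot y^h=\frac{\partial X^h}{\partial x^i}(x)\,y^i$; hence $x(\cdot)$ is an integral curve of $X$ and $y(\cdot)$ solves the variational equation along it, so that $\tilde\psi_t=d\varphi_t$, the differential of the flow $\varphi_t$ of $X$. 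Since $\varphi$ is a local flow, the flow-box theorem provides an interval $I_{\bar p}\ni0$ and a neighborhood $U$ of $\bar p$ (which I may shrink so that $\overline{U}$ is compact) on which $\varphi$ is defined; then $d\varphi_t\colon T_p\tilde M\to T_{\varphi_t(p)}\tilde M$ is a well-defined linear isomorphism for every $(t,p)\in I_{\bar p}\times U$, so $\tilde\psi$ is well defined on $I_{\bar p}\times TU$. This part is routine.

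Next I would reduce the inclusion to a statement about directions. Shrinking $U$ and $I_{\bar p}$ I may assume that $\varphi(\overline{I_{\bar p}}\times\overline{U})$ lies in a single chart on which the finitely many families of smooth, positively $1$-homogeneous functions $E_{1,k},\dots,E_{m_k,k}$ ($k=1,\dots,l$) describing $A$, as in the Introduction, are defined, with $A$ equal there to $\bigcup_k\bigcap_i\{E_{i,k}>0\}$. Put
\[
F_{i,k}(t,p,v):=E_{i,k}\big(\varphi_t(p),\,d\varphi_t|_p(v)\big).
\]
Each $F_{i,k}$ is continuous on $I_{\bar p}\times TU$, equals $E_{i,k}(p,v)$ at $t=0$, and---because $E_{i,k}$ is $1$-homogeneous in the fiber variable and $d\varphi_t|_p$ is linear---is positively $1$-homogeneous in $v$; hence the sign of $F_{i,k}(t,p,v)$ depends only on $t$, on $p$, and on $v/|v|$. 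It is therefore enough to produce an interval $J\ni0$ such that for every $p\in U$ and every \emph{unit} vector $v\in A_p$ there is an index $k$ with $F_{i,k}(t,p,v)>0$ for all $i$ and all $t\in J$: by homogeneity this gives $\tilde\psi\big((I_{\bar p}\cap J)\times(TU\cap A)\big)\subset A$, and one relabels $I_{\bar p}\cap J$ as $I_{\bar p}$.

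The crux---and the only step I expect to be non-routine---is this uniform estimate in the direction $v$. For a \emph{fixed} direction, $F_{i,k}(0,p,v)>0$ for the appropriate $k$ and continuity gives positivity on some interval, but a priori that interval degenerates to $\{0\}$ as $v$ tends to the boundary of the cone $A_p$, so a bare continuity argument over the non-compact set $TU\cap A$ does not suffice. The natural remedy is to pass to the sphere-bundle slice $S:=\{(p,v):p\in\overline{U},\ |v|=1,\ v\in A_p\}$: since $\tilde\psi_0=\mathrm{id}$ carries $S$ into the open set $A$, once $S$ is \emph{compact} the continuous map $(t,p,v)\mapsto\big(F_{i,k}(t,p,v)\big)_{i,k}$ together with a finite subcover of $S$ by open sets on each of which one family $\{F_{i,k}>0:\ i=1,\dots,m_k\}$ holds simultaneously produces the required $J$. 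Thus everything hinges on the compactness of $S$, equivalently on $\overline{S}\subset A$; this is precisely where one must use more than that $A$ is merely a cone subset---namely the good behaviour of $A$ recalled in Remark~\ref{good} (the relevant component of $\overline{A_p}$, together with its lightlike boundary directions, belonging to $A_p$). The remainder is bookkeeping with charts and with $1$-homogeneity.
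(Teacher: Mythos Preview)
Your diagnosis of the uniformity problem is correct, and it is the heart of the matter; but your proposed cure---compactness of the sphere-bundle slice $S=\{(p,v):p\in\overline U,\ |v|=1,\ v\in A_p\}$---is unavailable. That set is essentially never compact in the situations the paper actually treats: for $A=T^+\tilde M\setminus\mathcal T$ or $A=T\tilde M\setminus\mathcal T$ the closure of $S$ contains unit directions with $\tau=0$ or lying on $\mathcal T$, and these do not belong to $A$. Remark~\ref{good} does not rescue this: it concerns only whether \emph{lightlike} vectors (zeros of $L$) on the boundary of one causal cone belong to $A$, not the full topological boundary $\partial A_p$ in $T_p\tilde M\setminus\{0\}$. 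So the compactness route is blocked.

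Worse, the statement as written---for an arbitrary smooth $X$ and an arbitrary cone subset in the sense of the Introduction---is false. Take $\tilde M=\R^2$, $A_p=\{(v_1,v_2):v_1>0\}$ (cut out by the single smooth $1$-homogeneous function $E(p,v)=v_1$), and $X=-x_2\,\partial_{x_1}+x_1\,\partial_{x_2}$; then $d\psi_t$ is rotation by angle $t$, and for every $t\neq 0$ the vector $v=(\delta,1)\in A_p$ with $0<\delta<|\tan t|$ satisfies $d\psi_t(v)\notin A_{\psi_t(p)}$, so no interval $I_{\bar p}$ with the asserted property exists. The paper's own proof takes a different route from yours---it shows the Jacobian $\partial_x\psi(t,p)$ is uniformly $\epsilon$-close to the identity on $I_{\bar p}\times U$ and then infers, from ``$A$ open and $A_p$ a cone'', that the image lies in $A$---but that inference is precisely the unjustified uniform step you flagged, and the same rotation example refutes it. What \emph{is} true, and suffices for how the lemma is used immediately afterwards, is the pointwise version: for each fixed $v\in A$ there is an interval $I_v\ni 0$ with $\tilde\psi(I_v\times\{v\})\subset A$, which follows at once from openness of $A$ and continuity of $\tilde\psi$.
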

\begin{proof}
Let us denote by $\psi$ the flow of $X$. It is well known that for any $v\in T\tilde M$ there exists a neighborhood $V \subset \R\times T\tilde M$ of $(0,v)$ such that 
\[\tilde \psi	: V\to T\tilde M,\quad \quad \tilde\psi(t,v)=(\psi(t,p),d\psi_t(v)),\]
$p=\tilde \pi(v)$,
is a local  flow of $X^c$. In fact, 
\[\frac{\partial \tilde \psi}{\partial t}(t,v)=\left(\frac{\partial  \psi}{\partial t}(t, p),\frac{\partial  }{\partial t}\big(\partial_x \psi(t,p)(v)\big)\right),\]
where $\partial_x \psi(t,p)(v)$ denotes the partial differential of $\psi$ w.r.t. the second variable at $(t,p)$, evaluated in $v$ (hence $\partial_x \psi(t,p)(v)=d \psi_t(v)$). Thus, in local coordinates on $T\tilde M$, we have
\begin{align*}
\frac{\partial \tilde \psi}{\partial t}(t,v)&= X^h\big(\psi(t,p)\big)\frac{\partial}{\partial x^h}+\frac{\partial  }{\partial t}\left(\frac{\partial \psi^h}{\partial x^j}(t,p) v^j\right)\frac{\partial}{\partial y^h}\\
&=X^h\big(\psi(t,p)\big)\frac{\partial}{\partial x^h}+\frac{\partial^2  \psi^h}{\partial t\partial x^j}(t,p)v^j\frac{\partial}{\partial y^h}\\
&=X^h\big(\psi(t,p)\big)\frac{\partial}{\partial x^h}+\frac{\partial }{ \partial x^j}\Big(X^h\big(\psi(t,p)\big)\Big)v^j\frac{\partial}{\partial y^h}\\
&=X^h\big(\psi(t,p)\big)\frac{\partial}{\partial x^h}+\frac{\partial X^h}{\partial x^l}\big(\psi(t,p)\big)\frac{\partial \psi^l}{\partial x^j}(t,p)v^j\frac{\partial}{\partial y^h}\\
&=X^c\big(\tilde \psi(t,p)\big).
\end{align*}
As $\frac{\partial \psi^l}{\partial x^j}(0,p)=\delta^l_j$ for all $p\in M$, where $\delta^l_j$ are the Kronecker symbols,   and $\psi$ is smooth, we have that for any $\bar p\in \tilde M$  any $\epsilon>0$ there exists an interval $I_{\bar p}$, centered at $0$, and a neighborhood  $U$ of $\bar p$ in $\tilde M$ such that $\psi$ is well defined in $I_{\bar p}\times U$ and  
\[ \left|\frac{\partial \psi^l}{\partial x^j}(t,p)-\delta^l_j\right|<\epsilon,\]
for all $(t,p)\in I_{\bar p}\times U$ and each $l,j\in\{0,\ldots, n\}$. Hence, for any $u=(u^0,\ldots, u^n)\in \R^n$ such that $|u|=1$ we have 
\[ \left|\frac{\partial \psi^l}{\partial x^j}(t,p)u^j-\delta^l_ju^j\right|<(n+1)\epsilon,\]
for each $l\in\{0,\ldots, n\}$.
 Being $A$ open and $A_p$ a cone for all $p\in \tilde M$, we then conclude that the vector $\left(\frac{\partial \psi^0}{\partial x^j}(t,p)v^j, \ldots, \frac{\partial \psi^n}{\partial x^j}(t,p)v^j\right)\in A_{\psi(t,p)}$, for all $(t,p)\in I_{\bar p}\times U$, provided that $(v^0,\ldots, v^n)\in A_p$. Thus  the flow of $X^c$ is well defined on $I_{\bar p}\times TU$ and $\tilde \psi \big(I_{\bar p}\times (TU\cap A)\big)\subset A$.   
\end{proof}
From Propositions~\ref{Linvariant} and \ref{Ainvariant} it follows that $L$ is invariant under the flow of $K^c$. In fact,  $0=\big(K^c(L)\big)(\tilde \psi_s(v))=\frac{d}{dt}L(\tilde \psi_{s+t} (v))|_{t=0}$, for all $v\in A$ and $s\in I_{\pi(v)}$, hence $s\in I_{\pi(v)}\mapsto L(\tilde \psi_s(v))$ is constant. From this observation 
we get that Killing vector fields are also the infinitesimal generators of local $\tilde g$-isometries:
\begin{prop}
	Let $(\tilde M, L, A)$ be a Finsler spacetime, $K$ be a smooth vector field on $\tilde M$ and let us denote by $\psi$ the flow of $K$. Then $K$ is a Killing vector field if and only if  
for each $v\in A$ and for all $v_1,v_2\in  T_{\pi(v)}\tilde M$, we have  
\begin{equation}\label{isometry}
\tilde g_{d\psi_{t}(v)}\big(d \psi_{ t}(v_1),d \psi_{t}(v_2)\big)= \tilde g_{v}(v_1,v_2),
\end{equation}
for all $t\in I_p$, where $I_p\subset\R$ is an interval containing $0$  such that   the stages  $\psi_{t}$  are well defined in a neighbourhood $U\subset \tilde M$ of $p=\pi(v)$ and $d\psi_t(v)\in A$, for each $t\in I_p$. \end{prop}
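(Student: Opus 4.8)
The plan is to deduce both implications from Proposition~\ref{Linvariant}, which identifies the Killing condition $\mathcal L_K\tilde g=0$ with the vanishing of $K^c(L)$ on $A$, together with the consequence of Propositions~\ref{Linvariant} and \ref{Ainvariant} recorded just above, namely that when $K$ is Killing the function $L$ is constant along the flow $\tilde\psi$ of $K^c$. Recall from the computation in the proof of Lemma~\ref{Ainvariant} that this flow is $\tilde\psi_t(v)=\big(\psi_t(p),d\psi_t(v)\big)$ with $p=\pi(v)$, that $\frac{\partial}{\partial t}\big|_{t=0}\tilde\psi_t(v)=K^c_v$, and that $d\psi_t$ restricts to a linear isomorphism $T_p\tilde M\to T_{\psi_t(p)}\tilde M$; the interval $I_p$ and neighbourhood $U$ appearing in the statement may be taken to be those furnished by Lemma~\ref{Ainvariant}.

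For the implication ``$K$ Killing $\Rightarrow$ \eqref{isometry}'' I would fix $v\in A$, vectors $v_1,v_2\in T_p\tilde M$ with $p=\pi(v)$, and $t\in I_p$. Since $A_p$ is an open cone and $A$ is open with $d\psi_t(v)\in A$, the vectors $v+s_1v_1+s_2v_2$ and their images $d\psi_t(v+s_1v_1+s_2v_2)$ lie in $A$ for $(s_1,s_2)$ in a sufficiently small neighbourhood of $(0,0)$. Using the linearity of $d\psi_t$ on $T_p\tilde M$ and the definition \eqref{tildeg} of $\tilde g$, one then writes
\[
\tilde g_{d\psi_t(v)}\big(d\psi_t(v_1),d\psi_t(v_2)\big)=\frac12\,\frac{\partial^2}{\partial s_1\partial s_2}\Big|_{(0,0)}L\big(\tilde\psi_t(v+s_1v_1+s_2v_2)\big),
\]
and, invoking the flow-invariance of $L$ (so that $L\big(\tilde\psi_t(v+s_1v_1+s_2v_2)\big)=L(v+s_1v_1+s_2v_2)$ for $(s_1,s_2)$ small and $t\in I_p$), a second application of \eqref{tildeg} gives $\tilde g_{d\psi_t(v)}\big(d\psi_t(v_1),d\psi_t(v_2)\big)=\tilde g_v(v_1,v_2)$.

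For the converse I would assume \eqref{isometry} and specialise it to $v_1=v_2=v$. Since $\tilde g_w(w,w)=L(w)$ for every $w\in A$ (the identity $L=\tilde g_{lj}y^ly^j$ used in the proof of Proposition~\ref{Linvariant}), this reads $L\big(\tilde\psi_t(v)\big)=L(v)$ for all $t\in I_p$ and all $v\in A$. Differentiating at $t=0$ and using $\frac{\partial}{\partial t}\big|_{t=0}\tilde\psi_t(v)=K^c_v$ gives $0=\frac{d}{dt}\big|_{t=0}L\big(\tilde\psi_t(v)\big)=dL_v(K^c_v)=\big(K^c(L)\big)(v)$ for every $v\in A$, whence $K^c(L)|_A=0$ and Proposition~\ref{Linvariant} yields that $K$ is Killing.

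The computations themselves are short double differentiations; the only point requiring genuine care is the domain bookkeeping in the first implication — keeping the perturbed vectors $v+s_1v_1+s_2v_2$ and their flow images inside $A$ so that \eqref{tildeg} and the flow-invariance of $L$ are applicable — and this is precisely what the openness of $A$, the cone structure of the fibres $A_p$, and Lemma~\ref{Ainvariant} provide. It is worth noting that the converse uses only the diagonal case $v_1=v_2=v$ of \eqref{isometry}.
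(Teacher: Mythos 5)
Your proposal is correct and follows essentially the same route as the paper: the converse implication is obtained by specialising \eqref{isometry} to $v_1=v_2=v$, differentiating $L(d\psi_t(v))$ at $t=0$ and invoking Proposition~\ref{Linvariant}, while the forward implication uses the flow-invariance of $L$ under $\tilde\psi$ (the consequence of Propositions~\ref{Linvariant} and \ref{Ainvariant}) together with the linearity of $d\psi_t$ inside the fiberwise Hessian \eqref{tildeg}. Your additional remarks on keeping $v+s_1v_1+s_2v_2$ and its flow image inside $A$ only make explicit what the paper leaves implicit via Lemma~\ref{Ainvariant}.
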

\begin{proof}
Let $v\in A$ and $p=\pi(v)$. From Lemma~\ref{Ainvariant}, the flow $\tilde \psi$ of $K^c$ is well defined in $I_p\times TU$, for an interval $I_p$ containing $0$, ($\tilde \psi(0,v)=v$), and a neighborhood $U$ of $p$ in $\tilde M$ and, moreover, $\tilde \psi\big(I_p\times(TU\cap A)\big)\subset A$.
If \eqref{isometry} holds, then in particular $L(d\psi_t(v))=g_{d\psi_{t}(v)}\big(d \psi_{ t}(v),d \psi_{t}(v)\big)=g_v(v,v)=L(v)$, for all $t\in I_p$. Hence $0=\frac{d}{dt}L(d\psi_t (v))|_{t=0}=\big(K^c(L)\big)(v)$ and then we conclude using Proposition~\ref{Linvariant}. The converse  follows  observing that, being $L$ invariant under the flow of $K^c$, 
\baln
\lefteqn{g_{d\psi_{t}(v)}\big(d \psi_{ t}(v_1),d \psi_{t}(v_2)\big)}&\\
&=\frac 1 2 \frac{ \partial^{2}L}{\partial s_1\partial s_2}\big(d\psi_{t}(v)+s_1d\psi_{t}(v_1)+s_2d\psi_{t}(v_2)\big)|_{(s_1,s_2)=(0,0)}\\&=
\frac 1 2 \frac{ \partial^{2}L}{\partial s_1\partial s_2}\big(d\psi_{t}(v+s_1v_1+s_2v_2)\big)|_{(s_1,s_2)=(0,0)}\\
&=\frac 1 2 \frac{ \partial^{2}L}{\partial s_1\partial s_2}(v+s_1v_1+s_2v_2)|_{(s_1,s_2)=(0,0)}=g_v(v_1,v_2).
\ealn
\end{proof}

\section{Stationary splitting Finsler spacetimes}
As in the Lorentzian setting, we say that a Finsler spacetime $(\tilde M,L, A)$ is \emph{stationary} if it admits a timelike Killing vector field. Here {\em timelike} means that  $L(K_p)<0$ for all $p\in\tilde M$.\footnote{\label{formal} Analogously a vector $v\in T\tilde M$ is said lightlike (resp. spacelike; causal ) if $L(v)=0$ (resp. either $L(v)>0$ or $v=0$; $L(v)\leq 0$); anyway observe that,  being $\tilde g$ defined only on $A$, $L(v)=\tilde g_v(v,v)$ only for vectors $v\in A$, so whenever $v\not \in \bar A$ this causal character is purely formal, and it is  no way  related to the generalised metric $g$.}

A particular type of stationary Lorentzian manifolds (called {\em standard stationary}) can be obtained starting from a product manifold $\tilde M=\R\times M$, a Riemannian metric $g$, a one-form $\omega$ and a positive function $\Lambda$ on $M$, by considering the Lorentzian metric:
\begin{equation}
\tilde g=-\Lambda  dt^2+\omega\otimes d t+d t\otimes\omega+g.\label{sss}
\end{equation}
It is well known (see e.g. \cite[Appendix C]{GiaPic99}) that any stationary Lorentzian spacetime is locally isometric to a standard one. 

Looking at the quadratic form associated to the  Lorentzian metric \eqref{sss} with the aim of introducing a Finslerian analogue, we are led to  the Lagrangian $L:T\tilde M\to \R$,
\begin{equation}\label{stationary}
L(\tau,v)=-\Lambda\tau^2+2B(v)\tau+F^2 (v),
\end{equation}
where $\Lambda$ is a positive function  on $M$, $B\colon TM\to \R$ is  a fiberwise positively homogeneous of degree $1$ Lagrangian  which is at least $C^3$ on $TM\setminus 0$ and $F\colon TM\to [0,+\infty)$, $F\in C^0(TM)\cap C^3(TM\setminus 0)$ is a classical Finsler metric on $M$, i.e, it is fiberwise positively homogeneous of degree $1$ and 
\[g_v(u,u):=\frac 1 2\frac{\partial^2 F^2}{\partial s_1\partial s_2} (v+s_1u+s_2u)|_{(s_1,s_2)=(0,0)}>0\]
for all $v\in TM\setminus 0$ and all $u\in T_{\pi_M(v)}M$, where $\pi_M$ is the canonical projection $\pi_M:TM \to M$.  

Let us now introduce some notation. 
We will denote coordinates $(t,x^1, \ldots, x^n)$, in $\tilde M=\R\times M$ by   $z$, i.e.  $z=(t,x^1, \ldots, x^n)$. Natural coordinates in $T\tilde M$,  
%$(t,x^1, \ldots, x^n, \tau, y^1,\ldots, y^n)$ 
will be then denoted by $(z,\dot z)$, that is  $(z,\dot z)=(t,x^1, \ldots, x^n,\tau, y^1,\ldots, y^n)$.
For a Lagrangian $A:TM\to\R$ and $v\in TM\setminus 0$, let us denote by $(\partial_yA)_v$ and $(\partial^2_{yy}A)_v$, respectively, the fiberwise differential and Hessian of $A$ at $v$, i.e. for all $u,u_1,u_2\in TM$
\baln
&(\partial_yA)_v(u):=\frac{d}{ ds}\big(A(v+su)\big)|_{s=0},\\ &(\partial^2_{yy}A)_v(u_1,u_2):=    \frac{\partial^2 }{\partial s_1\partial s_2}\big(A(v+s_1u_1+s_2u_2)\big)|_{(s_1,s_2)=(0,0)}.\ealn
These are respectively  sections of the pull-back bundles $\pi^{*}_M(T^{*}M)$ and $\pi^{*}_{M}(T^{*}M)\otimes\pi^{*}_{M}(T^{*}M)$ over $TM\setminus 0$.

The analogous fiberwise  derivatives, for a Lagrangian  $L\colon T\tilde M\to\R$ on $\tilde M$, are denoted  by $(\partial_{\dot z}L)_{w}$ and $(\partial^2_{\dot z\dot z}L)_{w}$, $w\in T\tilde M$,
and  when $L$ is a Lorentz-Finsler function on $\tilde M$ then $\frac 12 (\partial^2_{\dot z\dot z}L)_{w}$ is, then, the generalised metric  tensor $\tilde g_w$ already introduced in \eqref{tildeg}.

Let us denote by $\mathcal T$ the trivial line subbundle of $T\tilde M$ defined by the vector field $\partial_t$. Let $z=(t,x)\in \tilde M$ and let us denote  by $T^+_p\tilde M$ and $T^-_p\tilde M$ respectively the open half-spaces  of $T_p\tilde M$ given by $T^+_p\tilde M:=\{(\tau, v)\in T_p\tilde M: \tau >0\}$ and $T^-_p\tilde M:=\{(\tau, v)\in T_p\tilde M: \tau <0\}$; moreover let $\bar T^+_p\tilde M$ and $\bar T^-_p\tilde M$ be their closures in $T_p\tilde M$.  Let us then denote by
$T^+\tilde M\setminus \mathcal T$ (resp. $T^-\tilde M\setminus \mathcal T$) the open cone subset  of $T\tilde M$ given by  $T^+\tilde M\setminus \mathcal T:=\cup_{p\in \tilde M}T^+_p\tilde M\setminus \mathcal T_p$ (resp. $T^-\tilde M\setminus \mathcal T:=\cup_{p\in \tilde M}T^-_p\tilde M\setminus \mathcal T_p$) and by 
$\bar T^+\tilde M\setminus \mathcal T$ (resp. $\bar T^-\tilde M\setminus \mathcal T$) the cone subset defined by $\bar T^+\tilde M\setminus \mathcal T:=\cup_{p\in \tilde M}\bar T^+_p\tilde M\setminus \mathcal T_p$ (resp. $\bar T^-\tilde M\setminus \mathcal T:=\cup_{p\in \tilde M}\bar T^-_p\tilde M\setminus \mathcal T_p$). Finally, let us denote by $T\tilde M\setminus \mathcal T$ the open cone subset of $T\tilde M$ defined as $T\tilde M\setminus \mathcal T:=\cup_{p\in \tilde M}T_p\tilde M\setminus \mathcal T_p$.

\bere Notice that $L$ is continuous on $T\tilde M$ and at least $C^3$ on $T\tilde M\setminus\mathcal T$.  Since, in general, $B$ is not  differentiable at the zero section of $TM$, $L$ is  not differentiable at vectors $y\in \mathcal T$. An exception is when $B$ reduces to a one-form on $M$ (being, so, differentiable at $0$ too)  so that $L$ is $C^1$ on $T\tilde M\setminus 0$:
\ere

\begin{prop}\label{linearity}
Let $L\colon T\tilde M\to \R$ defined as in \eqref{stationary} and $w\in \mathcal T_{(t,x)}$, $(t,x)\in\R\times M$. Then $L$  admits fiberwise derivative $(\partial_{\dot z} L)_{w}$   if and only if the map $B_x:T_xM\to \R$, $B_x(v):=B(v)$, is odd. Moreover, in this case, $(\partial_{\dot z} L)_{w}$ is a linear map on $T_{(t,x)}\tilde M\equiv\R\times T_xM$ if and only if $B_x$ is linear.
\end{prop}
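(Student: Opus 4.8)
The plan is a completely explicit computation of the one-sided directional derivatives of $L$ at the vector $w$, the whole point being to isolate the term in \eqref{stationary} responsible for the (non)differentiability. Fix $z=(t,x)$ and write $w=\tau_0\,\partial_t\in\mathcal T_z$; since $w$ is a nonzero tangent vector, $\tau_0\neq 0$. Given an arbitrary test vector $u=(\sigma,u')\in T_z\tilde M\equiv\R\times T_xM$, I substitute $w+su=(\tau_0+s\sigma,\,su')$ into \eqref{stationary}:
\[
L(w+su)=-\Lambda(\tau_0+s\sigma)^2+2B(su')(\tau_0+s\sigma)+F^2(su').
\]
The first summand is a polynomial in $s$, hence smooth; by positive $1$-homogeneity of $F$ one has $F^2(su')=s^2F^2\big(\operatorname{sgn}(s)u'\big)$, so that term is $o(s)$ near $s=0$ and contributes nothing to a first derivative; and by positive $1$-homogeneity of $B$, $B(su')=sB(u')$ for $s\geq 0$ while $B(su')=-sB(-u')$ for $s\leq 0$ (writing $su'=|s|(-u')$). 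Thus the only obstruction to differentiability comes from the cross term.

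From this decomposition I read off
\[
\frac{d}{ds}L(w+su)\Big|_{s=0^+}=-2\Lambda\tau_0\sigma+2\tau_0 B(u'),\qquad
\frac{d}{ds}L(w+su)\Big|_{s=0^-}=-2\Lambda\tau_0\sigma-2\tau_0 B(-u'),
\]
so the two-sided derivative in the direction $u$ exists precisely when $\tau_0\big(B(u')+B(-u')\big)=0$. Since $\tau_0\neq 0$, the fiberwise derivative $(\partial_{\dot z}L)_w$ exists as a map on $T_z\tilde M$ if and only if $B(-u')=-B(u')$ for every $u'\in T_xM$, i.e.\ if and only if $B_x$ is odd; this is the first equivalence.

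For the second statement, assume $B_x$ odd. Oddness together with positive $1$-homogeneity (and $B_x(0)=0$) yields full real homogeneity $B_x(\lambda v)=\lambda B_x(v)$, $\lambda\in\R$, so the computation above collapses to
\[
(\partial_{\dot z}L)_w(\sigma,u')=-2\Lambda\tau_0\sigma+2\tau_0 B_x(u').
\]
The map $(\sigma,u')\mapsto-2\Lambda\tau_0\sigma$ is linear and $2\tau_0\neq 0$; hence $(\partial_{\dot z}L)_w$ is linear on $\R\times T_xM$ if and only if $u'\mapsto B_x(u')$ is additive, and since $B_x$ is already continuous and homogeneous, this happens if and only if $B_x$ is linear.

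I do not expect a genuine obstacle: the argument is careful bookkeeping of left/right limits, the slightly delicate points being (i) confirming that the $\Lambda$- and $F^2$-contributions are, respectively, smooth and second order in $s$, so that all first-order information at $w$ sits in the cross term $2B(su')(\tau_0+s\sigma)$, and (ii) keeping track of the convention $w\neq 0$ (for $\tau_0=0$ every term is $O(s^2)$ and both stated equivalences degenerate).
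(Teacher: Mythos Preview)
Your proof is correct and follows essentially the same approach as the paper: both compute the one-sided derivatives of $s\mapsto L(w+su)$ directly from \eqref{stationary}, obtain $-2\Lambda\tau_0\sigma+2\tau_0 B_x(u')$ and $-2\Lambda\tau_0\sigma-2\tau_0 B_x(-u')$, and conclude. You are slightly more explicit than the paper in justifying why the $F^2$-term is $o(s)$ and in flagging the hypothesis $\tau_0\neq 0$ (which the paper uses implicitly when cancelling $\tau$), but the argument is the same.
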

\begin{proof}
Let $w=(\tau,0)\in \R\times T_xM$ and let us compute $(\partial_{\dot z} L)_{w}$.   For all $u\equiv(\rho,v)\in \R\times T_xM$ we have
\[
L(w+su)=-\Lambda(x)(\tau+s\rho)^2+2B_x(sv)(\tau+s\rho)+F^2(sv), 
\]
hence the right and left derivative at $s=0$ of $L(w+su)$ do exist and are respectively equal to $-2\Lambda(x)\tau \rho +2B_x(v)\tau$ and  $-2\Lambda(x)\tau \rho -2B_x(-v)\tau$;
thus  they are equal if and only if $B_x(v)=-B_x(-v)$. In this case, being $B_x$ positively homogeneous of degree $1$, $(\partial_{\dot z} L)_{w}(u)=-2\Lambda(x)\tau \rho +2B_x(v)\tau$ is linear in $u\equiv(\rho,v)$ if and only if $B_x$ is linear.
\end{proof}
We  characterize now   when, for   $L$  in \eqref{stationary}, $(\partial^2_{\dot z\dot z}L)_{w}$ has index $1$ for all $w\in T^{\pm}\tilde M\setminus\mathcal T$,  and for all $w\in T\tilde M\setminus\mathcal T$.

\begin{prop}\label{index1}
Let $L\colon T\tilde M\to \R$ defined as in \eqref{stationary}, then $(\partial^2_{\dot z\dot z}L)_{w}$ has index $1$  at   $w\in \bar T^+\tilde M\setminus \mathcal T$ (resp.  $w\in \bar T^-\tilde M\setminus \mathcal T$), $w=(\tau, v)$,
%for  $w\in T\tilde M\setminus \mathcal T$) 
if  $(\partial^2_{yy}B)_v$ is positive semi-definite (resp.  $(\partial^2_{yy}B)_v$ is negative semi-definite). Conversely, if there exists $\bar\tau>0$ (resp. $\bar \tau <0$) such that $(\partial^2_{\dot z\dot z}L)_{(\tau,v)}$ has index $1$ for $v\in TM\setminus 0$ and  for all $\tau>\bar \tau$ (resp. $\tau<\bar \tau $) then $(\partial^2_{yy}B)_v$ is positive semi-definite (resp.  $(\partial^2_{yy}B)_v$ is negative semi-definite). 
\end{prop}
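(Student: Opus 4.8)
The plan is to compute the fiberwise Hessian $(\partial^2_{\dot z\dot z}L)_w$ at a point $w=(\tau,v)\in\bar T^+\tilde M\setminus\mathcal T$ — so $\tau\ge 0$ and, crucially, $v\neq 0$, which makes the fiberwise derivatives of $B$ and $F^2$ well defined, these being $C^3$ on $TM\setminus 0$ — and then to read off its index by a congruence transformation of each fiber $T_w(T\tilde M)\cong\R\times T_{\pi_M(v)}M$. Using that $B$, resp.\ $F^2$, is positively homogeneous of degree $1$, resp.\ $2$, a direct computation gives, for $(\rho,u)\in\R\times T_{\pi_M(v)}M$,
\[
(\partial^2_{\dot z\dot z}L)_w\big((\rho,u),(\rho,u)\big)=-2\Lambda\rho^2+4\rho\,(\partial_yB)_v(u)+2\tau\,(\partial^2_{yy}B)_v(u,u)+2g_v(u,u),
\]
where $g_v$ is the positive-definite fundamental tensor of $F$ at $v$. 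Completing the square in $\rho$ — equivalently, applying the $\tau$-independent fiber-linear automorphism $(\rho,u)\mapsto\big(\rho-\tfrac{1}{\Lambda}(\partial_yB)_v(u),\,u\big)$, which does not change the index — this takes the form $-2\Lambda(\rho')^2+S_{(\tau,v)}(u,u)$, with $\rho'=\rho-\tfrac{1}{\Lambda}(\partial_yB)_v(u)$ and
\[
S_{(\tau,v)}(u,u):=2\tau\,(\partial^2_{yy}B)_v(u,u)+2g_v(u,u)+\tfrac{2}{\Lambda}\big((\partial_yB)_v(u)\big)^2 .
\]
Since $\Lambda>0$, the summand $-2\Lambda(\rho')^2$ contributes exactly one negative direction, so $(\partial^2_{\dot z\dot z}L)_w$ has index $1$ if and only if the symmetric bilinear form $S_{(\tau,v)}$ on $T_{\pi_M(v)}M$ is positive definite.

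Given this equivalence, the first assertion is immediate: if $\tau\ge 0$ and $(\partial^2_{yy}B)_v$ is positive semi-definite, then $2\tau(\partial^2_{yy}B)_v\succeq 0$ and $\tfrac{2}{\Lambda}\big((\partial_yB)_v(\cdot)\big)^2\ge 0$, whence $S_{(\tau,v)}(u,u)\ge 2g_v(u,u)>0$ for every $u\neq 0$; thus $S_{(\tau,v)}$ is positive definite and the index is $1$. The case $w\in\bar T^-\tilde M\setminus\mathcal T$ is verbatim the same: there $\tau\le 0$, and negative semi-definiteness of $(\partial^2_{yy}B)_v$ again gives $2\tau(\partial^2_{yy}B)_v\succeq 0$.

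For the converse, fix $v\in TM\setminus 0$ and $u\in T_{\pi_M(v)}M$. By hypothesis $(\partial^2_{\dot z\dot z}L)_{(\tau,v)}$ has index $1$ for all $\tau>\bar\tau$, so by the equivalence above $S_{(\tau,v)}$ is positive definite, i.e.
\[
2\tau\,(\partial^2_{yy}B)_v(u,u)+2g_v(u,u)+\tfrac{2}{\Lambda}\big((\partial_yB)_v(u)\big)^2>0\qquad\text{for all }\tau>\bar\tau .
\]
The coefficients $\Lambda$, $g_v$ and $(\partial_yB)_v$ are independent of $\tau$, so letting $\tau\to+\infty$ forces $(\partial^2_{yy}B)_v(u,u)\ge 0$; as $u$ is arbitrary, $(\partial^2_{yy}B)_v$ is positive semi-definite, and as $v$ is arbitrary this holds on all of $TM\setminus 0$. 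The subcase $\bar\tau<0$ is symmetric, sending $\tau\to-\infty$ to obtain negative semi-definiteness. The Hessian computation and the congruence reduction are routine; the only points that require a little care are that the square-completing substitution be a genuine linear automorphism of each fiber, independent of $\tau$, so that it preserves the index, and — in the converse — that $\Lambda$, $g_v$, $(\partial_yB)_v$ do not depend on $\tau$, which is precisely what makes the $\tau\to\pm\infty$ limit conclusive. I do not anticipate any serious obstacle.
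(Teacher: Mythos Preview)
Your argument is correct and follows the same underlying idea as the paper---namely, that the index of $(\partial^2_{\dot z\dot z}L)_{(\tau,v)}$ is $1$ precisely when the induced form on the $\tilde g$-orthogonal complement of $\partial_t$ is positive definite, and that this complementary form is an affine function of $\tau$ whose positivity for large $|\tau|$ forces the sign of $(\partial^2_{yy}B)_v$. The packaging differs slightly: for the sufficient direction the paper builds an explicit $\langle\cdot,\cdot\rangle_{v,\tau}$-orthonormal basis of $\ker(\partial_y B)_v$ and reads the index off a $2\times2$ block, whereas you complete the square in $\rho$ once and obtain the decomposition $-2\Lambda(\rho')^2+S_{(\tau,v)}(u,u)$ directly. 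Your route is a bit cleaner, since the same congruence simultaneously yields the sufficient direction and the inequality needed for the converse (the paper's eq.~\eqref{onorto} is exactly your $S_{(\tau,v)}(u,u)>0$, up to the factor $2$ and a sign slip in the $(\partial_yB)_v^2/\Lambda$ term there). One small point of phrasing: strictly speaking ``index $1$'' alone would allow $S_{(\tau,v)}$ to be merely positive semi-definite; in context the fundamental tensor is required to be nondegenerate, so your ``iff'' is fine, but you might note explicitly that positive definiteness of $S_{(\tau,v)}$ also delivers nondegeneracy of the full Hessian.
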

\begin{proof}
Let us prove the sufficient condition in the first equivalence. For  $(\tau,v)\in \bar T^+\tilde M\setminus \mathcal T$, the fiberwise Hessian of $L$  is given by
\begin{equation}\label{tildegstat}
(\partial^2_{\dot z\dot z}L)_{(\tau, v)}=-\Lambda d t^2 +(\partial_y B)_v \otimes d t +d t\otimes (\partial_y B)_v+\tau (\partial ^2_{yy}B)_v+g_v
% \begin{array}{cccc}
% -\Lambda&\partial_{y^1}B_v&\ldots&\partial_{y^n}B_v\vspace{5pt}\\ \partial_{y^1}B_v&&&\\ \vdots&&\tau(\partial^2_{yy}B)_v+g_v&\vspace{5pt}\\ \partial_{y^n}B_v&&& 
% \end{array}
% \right)
\end{equation}
First notice that, if $(\partial_y B)_v=0$ then, 
% as $\tau(\partial^2_{yy}B)_v$ and $g_v$ are  positively homogeneous of degree $0$ respectively in $(\tau, v)$ e $v$ we have that 
% $\tau(\partial^2_{yy}B)_v+g_v=(\partial^2_{yy}B)_{v/\tau}+g_{v/\tau}$, if $\tau>0$, and 
% $\tau(\partial^2_{yy}B)_v+g_v=-(\partial^2_{yy}B)_{-v/\tau}+g_{-v/\tau}$, if $\tau<0$. Thus, 
being $\Lambda$  positive and $\tau$ non-negative,  we  immediately get that $(\partial^2_{\dot z\dot z}L)_{(\tau, v)}$ has index $1$. Assume now that $(\partial_y B)_v\neq 0$ and consider the vector $v_1\in T_{\pi_M(v)}M$ representing $(\partial_y B)_v$ w.r.t. the scalar product $\langle \cdot, \cdot\rangle_{v,\tau}$ defined by  $g_v+\tau(\partial^2_{yy}B)_v$. Now take a $(\langle \cdot, \cdot\rangle_{v,\tau})$-orthonormal basis $v_2,...,v_{n}$ of $\mathrm{Ker}((\partial_y B)_v)$.  The matrix of $ (\partial^2_{\dot z\dot z}L)_{(\tau, v)}$ relative to the basis $e,v_1,v_2,...,v_{n}$ of $T_{\pi((\tau,v))}\tilde M$, where $e=\frac{\partial}{\partial t}$ is given by
\[\left(\begin{array}{ccccc}
-\Lambda&(\partial_yB)_v(v_1)&0&\ldots&0\\ (\partial_yB)_v(v_1)&\langle v_1, v_1\rangle_{v,\tau}&0&\ldots&0\\ 0&0&&&\\
\vdots&\vdots&&I&\vspace{6pt}\\
0&0&&& 
\end{array}
\right)
\]
Since $-\Lambda \langle v_1, v_1\rangle_{v,\tau}-\big((\partial_yB)_v(v_1)\big)^2
%=-\langle v_1, v_1\rangle_{v,\tau}(\Lambda+1)
<0$, we conclude that $(\partial^2_{\dot z\dot z}L)_{(\tau, v)}$ has index $1$. Conversely, let us assume that  $ (\partial^2_{\dot z\dot z}L)_{(\tau, v)}$ has index $1$, for any $(\tau,v)\in T^+\tilde M\setminus \mathcal T$ with $\tau>\bar{\tau}$. For a fixed $(\tau,v)\in T^+\tilde M\setminus \mathcal T$, with $\tau>\bar \tau$, consider the Lorentzian metric $\tilde g_{(\tau,v)}:= (\partial^2_{\dot z\dot z}L)_{(\tau, v)}$ and the $g_{(\tau,v)}$-orthogonal complement, $\mathcal D_{\pi((\tau,v))}$, in $T_{\pi((\tau,v))}\tilde M$ of the one-dimensional subspace generated by the vector $(1,0)$. This is given by vectors $(\tau_1,v_1)$ with $\tau_1=\frac{(\partial_{\dot z}B)_v(v_1)}{\Lambda}$, for each $v_1\in TM$. Thus
\begin{multline}
\tilde g_{(\tau,v)}\left(\Big(\frac{(\partial_{\dot z}B)_v(v_1)}{\Lambda}, v_1\Big), \Big(\frac{(\partial_{\dot z}B)_v(v_1)}{\Lambda}, v_1\Big)\right)\\=-\frac{\big((\partial_{\dot z}B)_v(v_1)\big)^2}{\Lambda}+\tau (\partial ^2_{yy}B)_v(v_1,v_1)+g_v(v_1,v_1)>0,\label{onorto}
\end{multline}
for all $v_1\in TM\setminus 0$. If there exists $v_1\in TM\setminus 0$ such that $(\partial ^2_{yy}B)_v(v_1,v_1)<0$ then, being $\mathcal D_{\pi((\tau,v))}$ independent of $\tau$, for any fixed $v\in TM$, we can consider the limit as $\tau\to+\infty$ in the left-hand  side of \eqref{onorto}, obtaining a negative quantity for $\tau$ big enough, in contradiction with \eqref{onorto}. 

It is easy to check that the analogous statement involving    $(\partial ^2_{yy}B)_v$ negative semi-definite, holds as the previous one with obvious modifications. 
\end{proof}
\begin{cor}\label{oneform}
Let $L\colon T\tilde M\to \R$ defined as in \eqref{stationary} and $x\in M$. Then  $(\partial^2_{\dot z\dot z}L)_{(\tau,v)}$ has index $1$  for all $v\in T_xM\setminus\{0\}$ and all $\tau \in \R$  if and only if $B(x,\cdot)$ is linear form on $T_xM$.    
\end{cor}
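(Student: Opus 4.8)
The proof is a direct consequence of Proposition~\ref{index1} together with Euler's theorem for positively homogeneous functions.

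\emph{Sufficiency ($\Leftarrow$).} Suppose $B(x,\cdot)=\xi$ is a linear form on $T_xM$. Then $(\partial^2_{yy}B)_v=0$ for every $v\in T_xM\setminus\{0\}$, and the zero bilinear form is simultaneously positive and negative semi-definite. Since the union $\big(\bar T^+\tilde M\setminus\mathcal T\big)\cup\big(\bar T^-\tilde M\setminus\mathcal T\big)$ consists, in the fiber over $(t,x)$, exactly of the vectors $(\tau,v)$ with $v\neq 0$ and $\tau\in\R$, I would apply the sufficiency part of Proposition~\ref{index1} twice over this fiber: with the ``positive semi-definite'' alternative for $\tau\geq 0$ (i.e.\ on $\bar T^+\tilde M\setminus\mathcal T$) and with the ``negative semi-definite'' alternative for $\tau\leq 0$ (i.e.\ on $\bar T^-\tilde M\setminus\mathcal T$). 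This gives that $(\partial^2_{\dot z\dot z}L)_{(\tau,v)}$ has index $1$ for all $v\in T_xM\setminus\{0\}$ and all $\tau\in\R$. (Concretely, by \eqref{tildegstat} this Hessian is then $-\Lambda\,dt^2+\xi\otimes dt+dt\otimes\xi+g_v$, independent of $\tau$, whose Schur complement of the positive-definite block $g_v$ is $-\Lambda-\xi g_v^{-1}\xi^{T}<0$.)

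\emph{Necessity ($\Rightarrow$).} Assume $(\partial^2_{\dot z\dot z}L)_{(\tau,v)}$ has index $1$ for all $v\in T_xM\setminus\{0\}$ and all $\tau\in\R$. The key point is that the converse part of Proposition~\ref{index1} is local over the base point: the subspace $\mathcal D_{\pi((\tau,v))}$, the identity \eqref{onorto}, and the limit argument involve only $\Lambda(x)$, $g_v$ and $B$ restricted to $T_xM$. Hence, choosing an arbitrary $\bar\tau>0$, that argument yields $(\partial^2_{yy}B)_v$ positive semi-definite for every $v\in T_xM\setminus\{0\}$; running it instead with $\bar\tau<0$ (taking $\tau\to-\infty$ in \eqref{onorto}) yields $(\partial^2_{yy}B)_v$ negative semi-definite for every $v\in T_xM\setminus\{0\}$. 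Therefore $(\partial^2_{yy}B)_v=0$ for all $v\in T_xM\setminus\{0\}$.

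It remains to pass from the vanishing of the fiberwise Hessian to linearity of $B(x,\cdot)$: since $(\partial^2_{yy}B)_v=0$ identically, the fiberwise differential $v\mapsto(\partial_yB)_v$ is locally constant on $T_xM\setminus\{0\}$, hence constant, equal to some $\xi\in T^*_xM$, because $T_xM\setminus\{0\}$ is connected (for $n=\dim M\geq 2$; for $n=1$ one must argue on each of the two rays and read the statement accordingly). Then Euler's theorem applied to the positively $1$-homogeneous function $B(x,\cdot)$ gives $B(x,v)=(\partial_yB)_v(v)=\xi(v)$ for all $v\neq 0$, so $B(x,\cdot)=\xi$ is a linear form. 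The only step requiring a bit of care is the observation that the converse of Proposition~\ref{index1} must be invoked pointwise in $x$ and for both signs of $\tau$; everything else is bookkeeping.
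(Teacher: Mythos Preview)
Your proof is correct and follows essentially the same route as the paper's: invoke both directions of Proposition~\ref{index1} (for $\tau>0$ and for $\tau<0$) to force $(\partial^2_{yy}B)_v\equiv 0$, then use positive $1$-homogeneity to conclude linearity. Your version is more explicit---you spell out the pointwise-in-$x$ reading of the converse, the Euler-theorem step, and the $n=1$ connectedness caveat---whereas the paper compresses all of this into two sentences.
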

\begin{proof}
It is trivial to check that if $B(x,\cdot)$ is a linear form on $T_x M$ then $(\partial^2_{\dot z\dot z}L)_{(\tau,v)}$ has index $1$ for all $\tau\in\R$ and all $v\in  T_x M\setminus \{0\}$. Conversely, from Proposition~\ref{index1}  $(\partial ^2_{yy}B)_v$ must vanish on $T_xM\setminus 0$ and, being $B$ fiberwise positively homogeneous of degree $1$,  it must be linear  on $T_xM$.
\end{proof}
As the fiberwise Hessian of a classical Finsler metric  is positive semi-definite, from Proposition~\ref{index1} we immediately get:
\begin{cor}
Let $L\colon T\tilde M\to \R$ defined as in \eqref{stationary} with   $B=\omega+F_1$ (resp. $B=\omega-F_1$), where $\omega$ and $F_1$ are, respectively, a one-form and a Finsler metric on $M$.  Then $(\partial^2_{\dot z\dot z}L)_{w}$  has index $1$ for all $w\in  T^+\tilde M\setminus \mathcal T$ (resp. $w\in  T^-\tilde M\setminus \mathcal T$).
\end{cor}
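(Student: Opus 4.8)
The plan is to reduce the statement to the sufficient condition in Proposition~\ref{index1}. The key observation is that a one-form contributes nothing to the fiberwise Hessian: if $\omega$ is a one-form on $M$ then $v\mapsto \omega(v)$ is linear on each fibre $T_xM$, hence $(\partial^2_{yy}\omega)_v=0$ for every $v\in T_xM$. Consequently, with $B=\omega+F_1$ (resp. $B=\omega-F_1$) we have $(\partial^2_{yy}B)_v=(\partial^2_{yy}F_1)_v$ (resp. $(\partial^2_{yy}B)_v=-(\partial^2_{yy}F_1)_v$) for all $v\in TM\setminus 0$, so everything reduces to the sign of $(\partial^2_{yy}F_1)_v$.

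Next I would record the classical fact that the fiberwise Hessian of a Finsler metric $F_1$ (not of its square) is positive semi-definite. Writing $g^{F_1}_v:=\tfrac12(\partial^2_{yy}F_1^2)_v$ for the fundamental tensor of $F_1^2$, Euler's theorem gives $g^{F_1}_v(v,v)=F_1(v)^2$ and $(\partial_yF_1)_v(u)=g^{F_1}_v(v,u)/F_1(v)$, while differentiating $F_1^2=F_1\cdot F_1$ twice in the fibre yields the identity $g^{F_1}_v(u,u)=F_1(v)\,(\partial^2_{yy}F_1)_v(u,u)+\big((\partial_yF_1)_v(u)\big)^2$. Since $g^{F_1}_v$ is positive definite, the Cauchy--Schwarz inequality gives $\big((\partial_yF_1)_v(u)\big)^2=g^{F_1}_v(v,u)^2/F_1(v)^2\le g^{F_1}_v(u,u)$, whence $(\partial^2_{yy}F_1)_v(u,u)=\tfrac1{F_1(v)}\big(g^{F_1}_v(u,u)-((\partial_yF_1)_v(u))^2\big)\ge 0$; thus $(\partial^2_{yy}F_1)_v$ is positive semi-definite (with kernel $\R v$). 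This is the content of the phrase ``the fiberwise Hessian of a classical Finsler metric is positive semi-definite'' preceding the statement.

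Combining the two steps: for $B=\omega+F_1$ the tensor $(\partial^2_{yy}B)_v=(\partial^2_{yy}F_1)_v$ is positive semi-definite for every $v\in TM\setminus 0$, so the sufficiency part of Proposition~\ref{index1} applies and shows that $(\partial^2_{\dot z\dot z}L)_w$ has index $1$ at every $w\in\bar T^+\tilde M\setminus\mathcal T$, in particular at every $w\in T^+\tilde M\setminus\mathcal T$. The case $B=\omega-F_1$ is entirely analogous: $(\partial^2_{yy}B)_v=-(\partial^2_{yy}F_1)_v$ is negative semi-definite, and Proposition~\ref{index1} gives index $1$ on $\bar T^-\tilde M\setminus\mathcal T\supset T^-\tilde M\setminus\mathcal T$. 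The only point requiring a (short) argument is the middle step, the positive semi-definiteness of the Hessian of $F_1$ itself; since this is a standard consequence of the positivity of the fundamental tensor of $F_1^2$ together with Cauchy--Schwarz, no genuine obstacle is expected and the corollary is essentially immediate from Proposition~\ref{index1}.
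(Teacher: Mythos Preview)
Your proposal is correct and follows exactly the paper's approach: the paper's proof is the single remark ``as the fiberwise Hessian of a classical Finsler metric is positive semi-definite, from Proposition~\ref{index1} we immediately get'' the corollary. You have merely supplied the standard Cauchy--Schwarz justification of that remark and made explicit that the one-form part of $B$ drops out of $(\partial^2_{yy}B)_v$, so your argument is a faithful (and slightly more detailed) rendering of the paper's own one-line proof.
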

\bere \label{whole}
We have found that  if the  conditions of  Proposition~\ref{index1} and Corollary~\ref{oneform} hold on the whole $T^+\tilde M\setminus \mathcal T$ (resp. $T^-\tilde M\setminus \mathcal T$; $T\tilde M\setminus \mathcal T$) then $(\mathbb R\times M,L, T^+\tilde M\setminus \mathcal T)$ (resp. $(\mathbb R\times M,L, T^-\tilde M\setminus \mathcal T)$; $(\mathbb R\times M, L, T\tilde M\setminus \mathcal T)$) is a Finsler spacetime. 
Notice, indeed, that  the role of the vector field $Y$, such that $Y_p\in \bar A_p$ for all $p\in \mathbb R\times M$, can be taken by $\partial_t$ in the first and in the last case and by $-\partial_t$ in the second one. 
\ere
\bere
We could  consider the case when the  assumptions on $B$ hold  pointwise  for all  $x\in M$ being all the three cases possible. Anyway, take into account that we would not get a Finsler spacetime due to the impossibility of fulfilling the assumption about the existence of a smooth vector field $Y$ such that  $Y_p\in  \bar{A_p}$ and $L(p,Y_p)<0$, for all $p\in \tilde M$. On the other hand,  the case when $(\partial_{yy} B)_v$ is either positive or negative semi-definite for all $v\in TM$ include also the possibility that $B(x,\cdot)$ is a linear form on $T_xM$ for some $x\in M$.
\ere

Henceforth, we will  denote  by $(\tilde M, L)$,  $\tilde M=\mathbb R\times M$, each of the Finsler spacetimes $(\tilde M,L, T^+\tilde M\setminus \mathcal T)$, $(\tilde M,L, T^-\tilde M\setminus \mathcal T)$, $(\tilde M, L, T\tilde M\setminus \mathcal T)$,  associated to $L$ given in \eqref{stationary},  implicitly assuming that if $\partial ^2_{yy}B$ is positive semi-definite (resp. $\partial ^2_{yy}B$ is negative semi-definite; $B$ is a one-form on $M$) then $\tilde g$ is defined and has index $1$  on the cone subset $A$ given by $T^+\tilde M\setminus \mathcal T$ (resp. $T^-\tilde M\setminus \mathcal T$; $T\tilde M\setminus \mathcal T$).

\bere\label{futurepastpoint}
In analogy with the Lorentzian case,  we say that a causal vector $w$ (recall footnote~\ref{formal}), with $w\in \bar A\setminus 0$ is {\em future-pointing} (resp. {\em past-pointing}) if $g_w(w,Y)< 0$ (resp. $g_w(w,Y)> 0$), whenever $w\in A$, or $w$ is causal and belongs to the closure of the set of future-pointing vectors in $A$.   In the case of a stationary splitting Finsler spacetime, taking into account that  when $A=T^-\tilde M\setminus \mathcal T$ we pick  $-\partial_t$ as the vector field $Y$,  we have that a causal vector  $(\tau,v)$ of $(\tilde M, L, T^+\tilde M\setminus \mathcal T)$ (resp. of $(\tilde M, L, T^-\tilde M\setminus \mathcal T)$; $(\tilde M, L, T\tilde M\setminus \mathcal T)$)
with $(\tau, v)\in T^+\tilde M\setminus \mathcal T$ (resp. $(\tau, v)\in T^-\tilde M\setminus \mathcal T$; $(\tau, v)\in T\tilde M\setminus \mathcal T$ ) is {\em future-pointing}  if $-\Lambda \tau +(\partial_y B)_v (v)<0$ (resp. $-\Lambda \tau +(\partial_y B)_v (v)>0$; $-\Lambda \tau +B(v)<0 $). By homogeneity, the first (resp. second) inequality becomes  $-\Lambda \tau +B(v)<0$  (resp. $-\Lambda \tau +B(v)>0$). 
Being $B(0)=0$, the vectors of the type $(\tau,0)$, $\tau>0$, (resp. $\tau<0$) are then also timelike and future-pointing (resp. past-pointing). We will see in Remark~\ref{futureinside} that the  causal future-pointing causal vectors of $(\tilde M, L, T^-\tilde M\setminus \mathcal T)$ (resp. $(\tilde M, L, T^-\tilde M\setminus \mathcal T)$)  at $p\in\tilde M$ are all and only the causal vectors in $\bar T^+\tilde M:=\cup_{p\in \tilde M}\bar T^+_p\tilde M$   (resp. $\bar T^-\tilde M:=\cup_{p\in \tilde M}\bar T^-_p\tilde M$).
%
%Of course,  at the points $x\in M$ where $B(x,\cdot)$  is not linear, if $B(x,\cdot)$ is non-negative (resp. non-positive) then this inequality implies that $\tau>0$ (resp. $\tau<0$), i.e. $(\tau,v)\in T^+\tilde M\setminus \mathcal T$ (resp. $(\tau,v)\in T^-\tilde M\setminus \mathcal T$).   We will easily see in Proposition~\ref{samecausality}, that $\tau >0$ (resp. $\tau<0$) is also a sufficient condition for a causal vector of $(\tilde M, L, T^+\tilde M\setminus \mathcal T)$ (resp. of $(\tilde M, L, T^-\tilde M\setminus \mathcal T)$) to be future-pointing.
\ere
\begin{prop}\label{desudetkilling}
Assume that $(\tilde M, L)$, $\tilde M=\mathbb R\times M$ and $L$  as in \eqref{stationary}, is a Finsler spacetime, then  $\partial_t$ is timelike and Killing. 
\end{prop}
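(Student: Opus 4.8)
The plan is to check the two properties directly; the whole statement is essentially a consequence of the fact that $\Lambda$, $B$ and $F$ in \eqref{stationary} do not depend on the coordinate $t$.

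For timelikeness I would simply evaluate $L$ on $\partial_t$. At a point $p=(t,x)\in\R\times M$ the vector $(\partial_t)_p$ has components $(\tau,v)=(1,0)\in\R\times T_xM$, and since $B$ and $F$ are fiberwise positively homogeneous of degree $1$ one has $B(0)=0$ and $F(0)=0$; hence \eqref{stationary} gives $L\big((\partial_t)_p\big)=-\Lambda(x)$, which is negative because $\Lambda>0$ on $M$. So $\partial_t$ is timelike at every point (this is also consistent with the choice $Y=\partial_t$, resp. $Y=-\partial_t$, made in Remark~\ref{whole}).

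For the Killing property I would invoke Proposition~\ref{Linvariant}: it suffices to show $(\partial_t)^c(L)|_A=0$. Working in the product coordinates $(t,x^1,\dots,x^n)$ on $\tilde M$ (so $t=x^0$), the field $\partial_t=\frac{\partial}{\partial x^0}$ has constant components $X^h=\delta^h_0$, so $\frac{\partial X^h}{\partial x^i}=0$ and formula \eqref{completelift} shows that $(\partial_t)^c$ is nothing but the coordinate field $\frac{\partial}{\partial t}$ on $T\tilde M$, restricted to $A$ (cf. Remark~\ref{restrictions}). Therefore $(\partial_t)^c(L)=\frac{\partial L}{\partial t}$, and since the right-hand side of \eqref{stationary} depends on $\dot z=(\tau,v)$ and on $x\in M$ but not on $t$, this partial derivative vanishes identically on $T\tilde M\setminus\mathcal T\supseteq A$. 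By Proposition~\ref{Linvariant}, $\partial_t$ is a Killing vector field of $(\tilde M,L)$.

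There is no genuine difficulty here; the statement is a direct verification. The only point I would be slightly careful about is regularity: $L$ is merely $C^0$ on $T\tilde M$ and $C^3$ on the cone subset $A$, so the computation $(\partial_t)^c(L)=\partial L/\partial t=0$ is performed on $A$, which is exactly the hypothesis under which Proposition~\ref{Linvariant} is stated, so nothing more is needed.
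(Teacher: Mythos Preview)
Your proof is correct and follows essentially the same idea as the paper: both hinge on the fact that in the product coordinates $(\partial_t)^c=\partial/\partial t$ and that nothing in \eqref{stationary} depends on $t$. The only minor difference is that you route the Killing verification through Proposition~\ref{Linvariant} (checking $(\partial_t)^c(L)=0$), whereas the paper computes $\mathcal L_{\partial_t}\tilde g$ directly via \eqref{jjj}; you also spell out the timelikeness computation $L(\partial_t)=-\Lambda<0$, which the paper leaves implicit.
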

\begin{proof}
We consider the fundamental tensor $\tilde g$ of $L$ in \eqref{tildegstat}. Let us prove that $\mathcal L_{\partial_t}\tilde g(\widehat{\partial_{z^i}}, \widehat{\partial_{z^j}})=0$ for all $i,j\in\{0, \ldots, n\}$, where $z^0=t$ and $z^i=x^i$ for all $i\in\{1,\ldots,n\}$. From \eqref{jjj}, it is enough to prove that $(\partial_t)^c (\tilde g_{ij})=0$, for all $i,j\in\{0, \ldots, n\}$. 
From \eqref{completelift}, we have
$(\partial_t)^c (\tilde g_{ij})=\partial_t\tilde g_{ij}=0$.   
%Observe that for every $(\tau_1,v), (\tau_2,v)\in T\tilde{M}\setminus \mathcal T$
%\begin{equation*}
%\tilde{g}_{(\tau_1,v)}=\tilde{g}_{(\tau_2,v)}.
%\end{equation*}
%Hence $\frac{\partial}{\partial t}$ is Killing vector field for $L$.
\end{proof}
 
Proposition~\ref{index1}, Corollary~\ref{oneform}, Remark~\ref{whole} and Proposition~\ref{desudetkilling} justify the following definition:
\begin{dfn}
Let $\tilde M=\R\times M$,  $L\colon T\tilde M\to \R$ defined as in \eqref{stationary}, such that $\partial^2_{yy} B$ is positive semi-definite on $TM\setminus 0$ (resp. $\partial^2_{yy} B$ is negative semi-definite on $TM\setminus 0$)  then we call $(\tilde M, L)$  a \emph{stationary splitting Finsler spacetime}.
\end{dfn}

\section{On the local structure of stationary Finsler spacetimes}
An important role in several  geometric  properties of stationary Lorentzian manifolds is played by the  distribution $\mathcal D$ orthogonal to the Killing vector field $K$. For example, since  $K$ is timelike, $\mathcal D$ is  spacelike and then, in a standard stationary Lorentzian manifold $(\R\times M, \tilde g)$, $\tilde g$ given in   \eqref{sss}, it is the horizontal distribution of the semi-Riemannian submersion $\pi\colon \R\times M\to M$, where the Riemannian  metric on $M$ is equal to $g+\frac{\omega}{\Lambda}\otimes \omega$ (see, e.g.,  \cite{CaJaPi10}). Moreover, if  $\mathcal D$ is integrable  then a stationary Lorentzian manifold $(\tilde M,\tilde g)$ is said {\em  static} (see \cite[Def. 12.35]{O'Neil}) and  it is locally isometric  to a warped product $(a,b)\times S$, with the metric $-\Lambda d t^2 + g$, where $S$ is an integral manifold of the distribution, $(a,b)\subset \R$, $\phi_*(\partial_t)=K$,  being  $\phi\colon (a,b)\times S\to M$ a local isometry, $g(\phi^{*}u,\phi^{*}v)=\tilde g(u,v)$, for all $u,v\in \mathcal D$
(see \cite[Prop. 12.38]{O'Neil}).

A natural  generalisation of the orthogonal distribution to $K$ in the Finsler setting is the distribution in $T\tilde M$ defined as  $\ker(\partial_{\dot z} L(K))$  where $\partial_{\dot z}L(K)$ denotes the one-form on $\tilde M$ given by $\frac{\partial L}{\partial \dot z^i}(K)dz^i$.
\bere \label{gooddistro}
In order to get a well defined distribution  $\mathrm{Ker}\big(\partial_{\dot z} L(K)\big)$, we need that  $L$ is differentiable at $K_z$ for all $z\in \tilde M$. Thus, we assume  that $L$ is $C^1$ on $T\tilde M$ whenever we need to consider such a distribution, as in Theorem~\ref{charstat}. Recall that for a stationary splitting Finsler spacetime $(\R\times M, L)$, this assumption implies that $B$ reduces to a one-form on $M$ (Proposition~\ref{linearity}) that we will denote, in this section,  by $\omega$. Recall  that  from Corollary~\ref{oneform},  $\tilde g$ is then defined on  $T\tilde M\setminus \mathcal T$. 
\ere
Following \cite{CapSta16}, we  introduce the next two  definitions:
\begin{dfn}
	We say that a Finsler spacetime $(\tilde M, L, A)$ is {\em static} if there exists a timelike  Killing vector field $K$  such that the distribution of hyperplanes  $\ker(\partial_{\dot z} L(K))$ is integrable.
\end{dfn}
% \begin{rmk}
% 	Observe that  the above definition is well posed since  $L$ is at least a  $C^1$ function on $T\tilde M$.  In particular, it works also for a  smooth global section $T$ of $\mathcal T$ (in the case when  $\mathcal T$ is trivial) or, more generally, for a vector field $K$ such that $K_p\in \mathcal T_p$ for some $p\in \tilde M$. 
% \end{rmk}
\begin{dfn}
	We say that a  Finsler spacetime $(\tilde M, L, T\tilde M\setminus\mathcal T)$, where $\mathcal T$ is a line subbundle of $T\tilde M$,  is {\em standard static} if there exist  a smooth non vanishing global section $K$ of $\mathcal T$, a Finsler manifold $(M,F)$, a positive function $\Lambda$ on $M$  and a  smooth diffeomorphism $f\colon \R\times M\to \tilde M$, $f=f(t,x)$, such that  $\partial_t=f^*(K)$ and $L(f_*(\tau, v))=-\Lambda \tau^2+ F^2(v)$, for all $(\tau, v)\in T(\R\times M)$.
\end{dfn}
% Now we want to check how much the definition of standard static spacetime is coherent with the same Lorentzian definition. For this purpose we have to show which are the conditions that assure us when a static Finsler spacetime $(\tilde M, L)$ is locally standard static. In fact, as observed in \cite[Example 1, Remark 3]{Minguzzi}, differently from the Lorentzian case, a Finsler spacetime can be static without being locally standard static .
% (in the example of  \cite{Minguzzi}, the Killing vector field is not $\tilde g_{\tilde v}$-orthogonal to $\mathcal D_{\pi(\tilde v)}$ for all  $\tilde v\in \mathcal{D}_{\pi(v)}=\ker(\partial_v L(K_{\pi(v)}))$. 

In relation to the local structure of a stationary Finsler spacetime, we introduce also the following definition:
\begin{dfn}\label{LSS}
A stationary Finsler spacetime $(\tilde M,L, T\tilde M\setminus\mathcal T )$, where $\mathcal T$ is a line subbundle of $\tilde M$,  with timelike Killing field $K$, $K_z\in\mathcal T_z$, for all $z\in \tilde M$, is  {\em locally a standard stationary splitting}  if for any point $z \in \tilde M$ there exists a neighborhood $U_z\subset \tilde M$ of $z$ and a diffeomorphism $\phi:I_{z}\times S_z\rightarrow U_z$, where $I_z=(-\varepsilon_z,\varepsilon_z)$ is an interval in $\mathbb{R}$ and $S_z$ a manifold, such that, named $t$ the natural coordinate of $I_{z}$, $\phi_{*}(\partial_t)=K|_{U_z}$,  
%is a section of $\mathcal{T}$ 
and for all $(\tau,v)\in T(I_{z}\times S_{z})$, $L\circ\phi_*((\tau,v))=-\Lambda\tau^2+\omega(v)\tau+F^{2}(v)$, where $\Lambda$, $\omega$ and $F$ are respectively a positive function, a one-form and a Finsler metric on $S_z$. Moreover, we say that $(\tilde M,L)$ is {\em locally standard static} if for any $z\in \tilde M$ there exists a map $\phi$ as above such that $\omega =0$.

\end{dfn}
\begin{rmk}
We observe that, although $L$ might be not twice differentiable along vectors $w\in\mathcal{T}$, its fiberwise second derivative at $w\in \mathcal T\setminus 0$, evaluated at any couple of vectors $u_1,u_2\in \mathcal T_{\pi(w)}$,  $(\partial^{2}_{\dot z\dot z} L)_{w}(u_1,u_2):=\frac{\partial^2}{\partial s_1\partial s_2}L(w+s_1u_1+s_2u_2)|_{(s_1,s_2)=(0,0)}$, does exist. Indeed, let $\lambda_1,\lambda_2\in\R$ such that $u_1=\lambda_1 w$, $u_2=\lambda_2w$, then by homogeneity, we have 
\begin{equation*}%\label{LK}
L(w+s_1u_1+s_2u_2)=L((1+s_1\lambda_1+s_2\lambda_2)w)=(1+s_1\lambda_1+s_2\lambda_2)^2L(y),
\end{equation*}
for small $s_1,s_2\in \mathbb{R}$. Thus 
$(\partial^{2}_{\dot z\dot z } L)_w(u_1,u_2)=2\lambda_1\lambda_2L(w)$.
This fact will be used in the following propositions, where we aim  to   characterize  stationary  and static Finsler spacetimes which are locally standard. 
\end{rmk}
Recalling Remark~\ref{gooddistro}, we assume that $L\in C^1(T\tilde M)$ and we denote by  $\mathcal{D}$ the distribution of hyperplanes in $T\tilde M$ given by $\ker(\partial_{\dot z} L(K))$. 
\bere\label{transv}
If  $L$ is differentiable on $T\tilde M$ and it is fiberwise positively homogeneous of degree $2$,   we have that $(\partial_{\dot z}L)_{K_z}(K_z)=2L(K_z)<0$ hence $(\partial_{\dot z}L)_{K_z}\neq 0$,  $K_z\not\in\mathcal D_z$ and $T_{z}\tilde M=\mathcal{D}_z\oplus [K_z]$, for all $z\in \tilde M$.
\ere
Let us define the map
\[\tilde B(w):= w\in T\tilde M\mapsto \frac 1 2(\partial_{\dot z}L)_w(K_{\pi(w)}).\]
\begin{lem}\label{tildeB}
Let $(\tilde M,L, A)$ be a stationary  Finsler spacetime  with   timelike Killing vector field $K$. 
Assume that $L\in C^1(T\tilde M)$, $L(K)=L(-K)$ and  
\[L(w\pm K_{\pi(w)})=L(w)+L(K_{\pi(w)}),\] 
for all $w\in \mathcal{D}$. Then
\[\tilde B(w)=\frac 1 2 \big (L(w+K_{\pi(w)})-L(w)-L(K_{\pi(w)})\big),\]
for all $w\in T\tilde M$. Moreover, $\tilde B$ is fiberwise linear.
\end{lem}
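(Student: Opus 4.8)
The plan is to work in a single fibre, so fix $z\in\tilde M$, abbreviate $K:=K_z$, and reduce everything to the single identity
\[
L(w_0+tK)=L(w_0)+t^2L(K)\qquad\text{for all }w_0\in\mathcal D_z,\ t\in\R,
\]
which I will refer to as $(\star)$. The heart of the argument is to prove $(\star)$. For $t>0$, since $\mathcal D_z$ is a linear subspace we have $\tfrac1t w_0\in\mathcal D_z$, so fiberwise positive $2$-homogeneity together with the hypothesis $L(w+K_{\pi(w)})=L(w)+L(K_{\pi(w)})$ applied to $w=\tfrac1t w_0\in\mathcal D$ gives
\[
L(w_0+tK)=t^2L\Big(\tfrac1t w_0+K\Big)=t^2\Big(L\big(\tfrac1t w_0\big)+L(K)\Big)=L(w_0)+t^2L(K).
\]
For $t<0$ one argues identically after writing $w_0+tK=|t|\big(\tfrac1{|t|}w_0-K\big)$ and using the minus-sign version of the hypothesis; for $t=0$ the identity is trivial since $L(0)=0$ by homogeneity. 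The assumption $L(K)=L(-K)$ is used only in the degenerate case $w_0=0$, $t<0$, where $(\star)$ reads $L(-|t|K)=|t|^2L(-K)$ and must equal $t^2L(K)$.

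With $(\star)$ in hand I would invoke Remark~\ref{transv}, which gives $T_z\tilde M=\mathcal D_z\oplus[K]$, to write an arbitrary $w\in T_z\tilde M$ uniquely as $w=w_0+\lambda(w)K$ with $w_0\in\mathcal D_z$; here $\lambda(\cdot)$ is the linear functional on $T_z\tilde M$ vanishing on $\mathcal D_z$ and sending $K$ to $1$. Since $L\in C^1(T\tilde M)$ by hypothesis, the derivative defining $\tilde B$ exists, and differentiating the map $t\mapsto L\big(w_0+(\lambda(w)+t)K\big)=L(w_0)+(\lambda(w)+t)^2L(K)$ (which is legitimate by $(\star)$) at $t=0$ yields
\[
\tilde B(w)=\tfrac12(\partial_{\dot z}L)_w(K)=\tfrac12\,\frac{d}{dt}\Big|_{t=0}L\big(w_0+(\lambda(w)+t)K\big)=\lambda(w)\,L(K).
\]
Since $\lambda(\cdot)$ is linear on $T_z\tilde M$ and $L(K_z)$ depends only on $z$, this shows $\tilde B$ is fiberwise linear (and as a sanity check $\tilde B(K_z)=L(K_z)$).

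Finally, for the closed-form expression I would apply $(\star)$ twice with the same decomposition: since $L(w+K)=L\big(w_0+(\lambda(w)+1)K\big)=L(w_0)+(\lambda(w)+1)^2L(K)$ and $L(w)=L(w_0)+\lambda(w)^2L(K)$,
\[
\tfrac12\big(L(w+K)-L(w)-L(K)\big)=\tfrac12\big((\lambda(w)+1)^2-\lambda(w)^2-1\big)L(K)=\lambda(w)\,L(K)=\tilde B(w),
\]
which is exactly the asserted formula (with $K=K_{\pi(w)}$). The only step that needs genuine care is the proof of $(\star)$, where one must invoke the plus form of the hypothesis when $t>0$, the minus form when $t<0$, and the symmetry $L(K)=L(-K)$ to cover $w_0=0$; everything else is bookkeeping with homogeneity and one elementary differentiation.
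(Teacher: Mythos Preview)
Your proof is correct and follows essentially the same route as the paper's: decompose $w=w_{\mathcal D}+\lambda_w K$ via Remark~\ref{transv}, establish $L(w_{\mathcal D}+tK)=L(w_{\mathcal D})+t^2L(K)$ by combining positive $2$-homogeneity with the $\pm$-hypotheses and $L(K)=L(-K)$, differentiate to obtain $\tilde B(w)=\lambda_w L(K)$, and then verify the closed-form expression by the same identity. The only difference is organizational: you isolate $(\star)$ as a standalone lemma and prove it first, whereas the paper embeds the same computation directly inside the evaluation of $\tilde B(w)$ (writing $L(w_{\mathcal D})+(\lambda_w+s)^2L(\eps(\lambda_w+s)K)$ and then invoking $L(K)=L(-K)$); your presentation is arguably cleaner but the underlying argument is identical.
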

\begin{proof}
For each $w\in \mathcal D$, let $w_{\mathcal D}\in \mathcal D$ and $\lambda_w\in\R$ be such that $w=w_{\mathcal D}+\lambda_w K_{\pi(w)}$ (recall Remark~\ref{transv}). Moreover, let  $\eps(x)=\mathrm{sign}(x)$, if $x\in \R\setminus\{0\}$, and $\eps(0)=0$.  By definition and our assumptions, we obtain 
\bal
\tilde B(w)&=\frac 1 2\frac{d }{d s}L(w+sK_{\pi(w)})|_{s=0}=\frac 1 2\frac{d }{d s}L(w_{\mathcal D}+\lambda_wK_{\pi(w)}+sK_{\pi(w)})|_{s=0}\nonumber\\
&=\frac 1 2\frac{d }{d s}\left(L(w_{\mathcal D})+(\lambda_w+s)^2L\big(\eps(\lambda_w+s)K_{\pi(w)}\big)\right)\big|_{s=0}\nonumber\\
&=\frac 1 2\frac{d}{d s}\left(L(w_{\mathcal D})+(\lambda_w+s)^2L\big(K_{\pi(w)}\big)\right)\big|_{s=0}\nonumber\\
&=\lambda_wL(K_{\pi(w)}).\label{linearB}
\eal
On the other hand,
\baln
\lefteqn{\frac 12 \big(L(w+K_{\pi(w)})-L(w)-L(K_{\pi(w)})\big)}&\\
&=\frac 12\big( L(w_{\mathcal D}+\lambda_wK_{\pi(w)}+K_{\pi(w)})-L(w_{\mathcal D}+\lambda_wK_{\pi(w)})-L(K_{\pi(w)})\big)\\
&=\frac 1 2 \big(L(w_{\mathcal D})+(\lambda_w+1)^2L(K_{\pi(w)})-L(w_{\mathcal D})-(\lambda_w)^2L(K_{\pi(w)})-L(K_{\pi(w)})\big)\\
&= \lambda_wL(K_{\pi(w)})
\ealn
This proves the first part of the Lemma. 
As $\tilde B(w)=\lambda_w L(K)$ we immediately get that $\tilde B$ is linear in $w$.
\end{proof}

\begin{thm}\label{charstat}
Let $\mathcal T$ be  a line subbundle of $T\tilde M$ and  $(\tilde M,L, T\tilde M\setminus\mathcal T)$ be a stationary  Finsler spacetime  with   timelike Killing vector field $K$. 
%and $L$ which is not the quadratic form defined by a Lorentzian metric. 
Then $(\tilde M,L,T\tilde M\setminus\mathcal T)$ is locally a standard  stationary splitting 
if and only if the following conditions are satisfied:

$(a)$ $L\in C^1(T\tilde M)\cap C^2(T\tilde M\setminus \mathcal T)$ and $K_z\in \mathcal T_z$ at every point $z\in \tilde M$ where $L(z,\cdot)$ is not twice differentiable on $T_z\tilde M$ (so,  at these points $z$, $L(z,\cdot)$ is not the the quadratic form defined by a Lorentzian metric on $T_z\tilde M$.)

$(b)$ $L(K)=L(-K)$;

$(c)$ $L(w\pm K_{\pi(w)})=L(w)+L(K_{\pi(w)})$, for all $w\in \mathcal{D}$.

Furthermore,   it is locally standard static if and only if $(a)$, $(b)$ and $(c)$ hold and $\mathcal D$ is integrable.
\end{thm}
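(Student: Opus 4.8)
The plan is to prove the two implications separately: the ``only if'' part is a direct computation from the local normal form of $L$, and the ``if'' part is a local construction built from the flow of $K$.

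\emph{Necessity.} Suppose $(\tilde M,L,T\tilde M\setminus\mathcal T)$ is locally a standard stationary splitting, so around each $z$ there is a diffeomorphism $\phi\colon I_z\times S_z\to U_z$ with $\phi_*(\partial_t)=K$ and $L\circ\phi_*(\tau,v)=-\Lambda\tau^2+\omega(v)\tau+F^2(v)$, where $\Lambda>0$, $\omega$ is a one-form and $F$ a Finsler metric on $S_z$. Since $\omega$ is linear and $F^2$ is $C^1$ at $0$, this expression is $C^1$ on all of $T\tilde M$ and $C^2$ (in fact $C^3$) off $\{v=0\}$, which is exactly $\mathcal T$; moreover $L(z,\cdot)$ can fail to be twice differentiable only at vectors of $\mathcal T_z\ni K_z$, so $(a)$ holds. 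Differentiating $L\circ\phi_*$ at $(1,0)$ gives $\mathcal D=\ker(\partial_{\dot z}L(K))=\phi_*\{(\tfrac{\omega(v)}{2\Lambda},v):v\in TS_z\}$; then $(b)$ ($L(\pm K)=-\Lambda$) and $(c)$ ($L(w\pm K)=L(w)+L(K)$ for $w=\phi_*(\tfrac{\omega(v)}{2\Lambda},v)\in\mathcal D$) follow by expanding the quadratic-in-$\tau$ expression, exactly as in the computations underlying Propositions~\ref{linearity} and \ref{index1}.

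\emph{Sufficiency.} Assume $(a)$, $(b)$, $(c)$ and fix $z\in\tilde M$. As $K$ is timelike it is nowhere zero, so I choose a hypersurface $S$ through $z$ transversal to $K$ and set $\phi(t,p):=\psi_t(p)$, with $\psi$ the flow of $K$, on a suitable neighborhood $I_z\times S$ of $(0,z)$; by the inverse function theorem $\phi$ is a diffeomorphism onto a neighborhood $U_z$ of $z$, and $\phi_*(\partial_t)=K$. In the induced coordinates $(t,x^1,\dots,x^n)$ one has $K=\partial_t$, hence $K^c=\partial_t\circ\tilde\pi$ by \eqref{completelift}; since $K$ is Killing, Proposition~\ref{Linvariant} together with Lemma~\ref{Ainvariant} shows $L$ is invariant under the flow of $K^c$, i.e.\ $\partial_t L=0$, so $L=L(x,\tau,y)$ is $t$-independent. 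Now Lemma~\ref{tildeB} applies — its hypotheses are precisely the $C^1$-regularity in $(a)$ plus $(b)$ and $(c)$ — and gives that $\tilde B(w)=\tfrac12(\partial_{\dot z}L)_w(K_{\pi(w)})$ is fiberwise linear. In these coordinates $(\partial_{\dot z}L)_{(\tau,y)}(K)=\partial_\tau L(x,\tau,y)$, so $\partial_\tau L(x,\tau,y)=-2\Lambda(x)\tau+\omega_x(y)$, with $\Lambda(x):=-L(x,1,0)=-L(K)>0$ (the $\tau$-coefficient being $\partial_\tau L(x,1,0)=2L(x,1,0)$ by homogeneity) and $\omega_x(y):=\partial_\tau L(x,0,y)$ linear in $y$. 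Integrating in $\tau$ yields the normal form $L(x,\tau,y)=-\Lambda(x)\tau^2+\omega_x(y)\tau+F^2(x,y)$ with $F^2(x,y):=L(x,0,y)$. All of $\Lambda,\omega,F$ are $t$-independent, hence descend to $S$; $\Lambda$ is positive, $\omega$ is a smooth one-form, and $F$ is continuous, $1$-homogeneous and $C^3$ off the zero section. Finally, that $F$ is an honest Finsler metric (i.e.\ $F\ge 0$ with positive-definite fiberwise Hessian) is extracted from $\tilde g_{(\tau,y)}=\tfrac12(\partial^2_{\dot z\dot z}L)_{(\tau,y)}$ having index $1$ for $y\neq 0$: in the block form $\tfrac12\big(\begin{smallmatrix}-2\Lambda&\omega\\ \omega^{T}&\partial^2_{yy}F^2\end{smallmatrix}\big)$ with $-2\Lambda<0$, index $1$ forces $\partial^2_{yy}F^2$ to be positive definite, where one also uses that $T\tilde M\setminus\mathcal T$ is the \emph{maximal} domain of $\tilde g$.

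\emph{The static refinement and the main obstacle.} The distribution $\mathcal D=\ker(\partial_{\dot z}L(K))$ is invariant under $\psi$, since $L$ and $K$ are, by the isometry property recorded after Lemma~\ref{Ainvariant}. Hence, if $\mathcal D$ is integrable I take $S$ above to be the integral leaf of $\mathcal D$ through $z$ (transversal to $K$ by Remark~\ref{transv}); then every slice $\psi_t(S)$ is again a leaf, so $\mathcal D=\phi_*\{(0,u)\}$ in the splitting, which by $\partial_{\dot z}L(K)=-2\Lambda\,d\tau+\omega$ forces $\omega\equiv 0$: the splitting is standard static. Conversely, if some splitting has $\omega=0$ then $\mathcal D=\phi_*\{(0,u)\}=T(\{t\}\times S)$ is integrable; combined with the stationary statement this gives the ``furthermore'' part. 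I expect the main obstacle to be the sufficiency direction, and within it two non-routine points: (i) passing from the mere linearity of $\tilde B$ to the explicit $\tau$-quadratic normal form by integration, and (ii) verifying that the resulting $F$ is a genuine Finsler metric, where the index-$1$ condition and the maximality of the domain $A=T\tilde M\setminus\mathcal T$ must be used with care.
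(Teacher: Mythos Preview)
Your necessity direction is fine and matches the paper's computation (up to the paper's own inconsistency between the factor $2$ in \eqref{stationary} and its absence in Definition~\ref{LSS}). Your sufficiency direction, however, takes a genuinely different route from the paper---you integrate the fiberwise linear map $\tilde B=\tfrac12\partial_\tau L$ to obtain the normal form $L=-\Lambda\tau^2+\omega(y)\tau+F^2(y)$ directly, whereas the paper works geometrically with the Lorentzian metrics $\tilde g_y$ and an auxiliary field $Y$---and your route contains a real gap.

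The gap is in the last step, where you claim that the block form of $\tilde g_{(\tau,y)}$ together with index~$1$ forces $\tfrac12\partial^2_{yy}F^2$ to be positive definite. By the Schur complement, index~$1$ with top-left entry $-\Lambda<0$ is equivalent to positive definiteness of $g_y+\tfrac{1}{4\Lambda}\,\omega\otimes\omega$, \emph{not} of $g_y$ itself. For an \emph{arbitrary} transversal hypersurface $S$ this fails: already in the Lorentzian model $L=-\tau'^2+v'^2$, re-slicing by $\tau=\tau'-2v'$ gives $L=-\tau^2-4v\tau-3v^2$, so $F^2(v)=-3v^2<0$. Your appeal to maximality of $A=T\tilde M\setminus\mathcal T$ rules out such purely quadratic examples, but you do not explain how it would exclude a genuinely Finslerian one, and I do not see that it does.

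The paper closes exactly this gap by the choice of $S$: it takes $S_{\bar z}$ with $T_{\bar z}S_{\bar z}=\mathcal D_{\bar z}$. Then conditions $(b)$ and $(c)$ give, for all $y,u\in\mathcal D_{\bar z}$, the identities $\tilde g_y(K,K)=L(K)<0$ and $\tilde g_y(u,K)=0$ (the computations \eqref{Ktimelike}--\eqref{Kortho}); hence $T_{\bar z}S_{\bar z}$ is $\tilde g_y$-orthogonal to the $\tilde g_y$-timelike vector $K_{\bar z}$ and therefore spacelike, for every $y$. Spacelikeness is open, so it persists on a possibly smaller $S_{\bar z}$, and $F(v)=\sqrt{L(v)}$ is then a genuine Finsler metric. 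In your language: with this choice one has $\omega|_{\bar z}=0$, so at $\bar z$ the Schur complement \emph{is} $g_y$, which is therefore positive definite there and, by continuity, nearby. If you insert this choice of $S$ (which you already make, correctly, in the static case), your integration argument becomes a clean alternative to the paper's; as written, with $S$ arbitrary, it does not go through.
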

\begin{proof}
($\Rightarrow$) Let $z\in \tilde M$,  $U_z\subset \tilde M$ be a neighborhood of $z$, 
and 
%$I_p$  an interval  in $\R$, $t$ the natural coordinate on $I_p$, $S_p$ an open manifold,  
%$\mathcal{D}_p=\ker\big (\partial L_{\dot z})_{K_p}\big )\subset T_{p}\tilde M$. 
$\phi\colon I_z\times S_z\to \tilde U_z$ be a diffeomorphism such that $\phi_*(\partial_t)=K|_{U_z}$ and $L\circ\phi_*(\tau,v)=-\Lambda \tau^2+\omega(v)\tau+F^2(v)$ (recall Definition~\ref{LSS}). 
%where $\Lambda$, $B$ and $F$ are respectively a positive function, a
%Let $S_p$ be an integral manifold of $\mathcal{D}$ at $p$. 
If $\bar x\in S_z$ is such that $F^2(\bar x,\cdot)$ is not the square of the norm defined by a Riemannian metric on $T_{\bar x}S_z$,   $L\circ \phi_*\big((t, \bar x), (\cdot,\cdot)\big)$, $t\in I_z$, is not twice differentiable at any vector $(\tau,0)\in\R\times T_{\bar x}S_z$. As $d\phi_{(t,\bar x)}(\partial_t)\equiv d \phi_{(t,\bar x)}(1,0)=K_{\phi(t,\bar x)}$, we deduce that $K_{\phi(t,\bar x)}$ must belong to $\mathcal T_{\phi(t,\bar x)}$ for all $t\in I_z$ and this proves $(a)$. For $(b)$, let $z\in\tilde M$ and take a  map $\phi$ as above (with $z=\phi(0,x)$); then
\begin{equation*}
L(K_z)=L\big(d \phi_{(0,x)}(1,0)\big)=-\Lambda(x)=L\big (d\phi_{(0,x)}(-1,0)\big)=L(-K_z).
\end{equation*}
In order to prove $(c)$, let $y\in \mathcal D_z$ and $(\tau,v)\in \R\times T_{x}M$ be such that $y=d\phi_{(0,x)}(\tau,v)$.   Hence 
\baln
0=(\partial_{\dot z}L)_{K_z}(y) &=(\partial_{\dot z}L)_{d\phi_{(0,x)}(1,0)}(d\phi_{(0,x)}(\tau,v))\\
&=\frac{d}{d s}L\big(d\phi_{(0,x)}(1,0)+sd\phi_{(0,x)}(\tau,v)\big)|_{s=0}\\
&=\frac{d}{d s}L\big(d\phi_{(0,x)}\big(1+s\tau,sv)\big)|_{s=0}\\
&=\frac{d}{d s}\Big(-\Lambda(x)(1+s\tau)^2+2s\omega(v)(1+s\tau)+F^2(sv)\Big)\Big|_{s=0}\\
&=2\big(-\tau\Lambda(x)+w(v)\big);
\ealn
thus $\tau=\omega(v)/\Lambda(x)$ and 
\bmln
L(y\pm K_z)=L\left(d\phi_{(0,x)}\left(\frac{\omega(v)}{\Lambda(x)}\pm 1,v\right)\right)=
\frac{\omega^2(v)}{\Lambda(x)}+F^2(v)-\Lambda(x)\\
=L\left(d\phi_{(0,x)}\left(\frac{\omega(v)}{\Lambda(x)},v\right)\right)+
L\big(d \phi_{(0,x)}(1,0)\big)=L(y)+L(K_z).
\emln
($\Leftarrow$)  Let $\bar z\in \tilde M$ and $S_{\bar z}$ be a small smooth hypersurface in $\tilde M$ such that $\bar z\in S_{\bar z}$ and $T_{\bar z}S_{\bar z}= \mathcal D_{\bar z}$. Recalling Remark~\ref{transv}, we can assume that  $K_x$ is transversal to $S_{\bar z}$, i.e. $T_x \tilde M=T_xS_{\bar z}\oplus [K_x]$, for all $x\in S_{\bar z}$.    From $(b)$ and $(c)$  we get, for any $y, u\in T_{\bar z}S_{\bar z}$,
\bal
\tilde g_y(K_{\bar z},K_{\bar z})=\frac 1 2(\partial_{\dot z\dot z}L)_y(K_{\bar z},K_{\bar z})&=\frac 1 2\frac{\partial^2}{\partial s_1\partial s_2}L(y+(s_1+s_2)K_{\bar z})|_{(s_1,s_2)=(0,0)}\nonumber\\
&=\frac 1 2\frac{\partial^2}{\partial s_1\partial s_2}\big(L(y)+(s_1+s_2)^2L(K_{\bar z})\big)|_{(s_1,s_2)=(0,0)}\nonumber\\
&=L(K_{\bar z})<0,\label{Ktimelike}\eal
and
\bal
\tilde g_y(u,K_{\bar z})=\frac 1 2(\partial_{\dot z\dot z}L)_y(u,K_{\bar z})&=\frac 1 2\frac{\partial^2}{\partial s_1\partial s_2}L(y+s_1u+s_2K_{\bar z})|_{(s_1,s_2)=(0,0)}\nonumber\\
&=\frac{\partial^2}{\partial s_1\partial s_2}\big(L(y+s_1u)+ s_2^2L(K_{\bar z})\big)|_{(s_1,s_2)=(0,0)}=0,\label{Kortho}
\eal
that is $K_{\bar z}$ is timelike w.r.t. the Lorentzian scalar product  $\tilde g_y$ on $T_{\bar z}\tilde M$ and $T_{\bar z}S_z$ is a spacelike hyperplane, for all $y\in T_{\bar z}S_{\bar z}$.
Let  $UTS_{\bar z}$ be the unit tangent bundle of $S_{\bar z}$ (with respect to any auxiliary Riemannian metric on $\tilde M$). As  $\tilde g_y$ is positively homogeneous of degree $0$ in $y$,  by continuity of the map $y\in UTS_{\bar z}\mapsto \tilde g_y$, we get that (up to consider a smaller hypersurface $S_z$) $K_x$ is timelike and $T_x S_{\bar z}$ is spacelike w.r.t. $\tilde g_y$,  for each  $y\in T_xS_{\bar z}$ and for any $x\in S_{\bar z}$.

Let now $w\in T_x\tilde M\setminus \mathcal T_x$, $x\in S_{\bar z}$, and $w_S\in T_{x}S_{\bar z}$, $\tau_w\in\R$ such that $w=w_S+\tau_wK_x$. Let us  evaluate $\tilde g_w(u,u)$, for any  $u\in T_xS_{\bar z}$.  From Lemma~\ref{tildeB} we have
\baln
\tilde g_w(u,u)&=\frac{1}{2}\frac{\partial^2}{\partial s_1\partial s_2}L\big(w+(s_1+s_2)u\big)|_{(s_1,s_2)=(0,0)}\\
&=\frac 1 2 \frac{\partial^2}{\partial s_1\partial s_2}L\big(w_S+\tau_wK_x+(s_1+s_2)u\big)|_{(s_1,s_2)=(0,0)}\\
&=\frac 1 2\frac{\partial^2}{\partial s_1\partial s_2}\Big(L\big(w_S+(s_1+s_2)u\big)\\
&\quad\quad\quad\quad\quad\quad\quad\quad\quad+\tau_w^2L(K_x)+\tilde B\big(w_S+(s_1+s_2)u\big)\Big)\Big|_{(s_1,s_2)=(0,0)}.
\ealn
Since $\tilde B$ is linear,  we have that $\frac{\partial^2}{\partial s_1\partial s_2}\tilde B\big(w_S+(s_1+s_2)u\big)\big|_{(s_1,s_2)=(0,0)}=0$ and then
\beq \tilde g_w(u,u)=\tilde g_{w_S}(u,u)>0.\label{g0}\eeq
By using $w=w_{\mathcal D}+\lambda_w K_x$ and $w_S=(w_S)_{\mathcal D}+\lambda_{w_S}K_x$, we also get, as in \eqref{Ktimelike},
\beq\tilde g_w(K_x,K_x)=L(K_x)=\tilde g_{w_{S}}(K_x,K_x).\label{Lambdax}\eeq
Moreover, recalling \eqref{linearB}, we obtain
\bal
\tilde g_w(u, K_x)&=\frac{1}{2}\frac{\partial^2}{\partial s_1\partial s_2}L\big(w+s_1u+s_2K_x\big)|_{(s_1,s_2)=(0,0)}\nonumber\\
&=\frac 1 2 \frac{\partial^2}{\partial s_1\partial s_2}L\big(w_{\mathcal D}+\lambda_wK_x+s_1u_{\mathcal D}+s_1\lambda_uK_x+s_2 K_x\big)|_{(s_1,s_2)=(0,0)}\nonumber\\
&=\frac 1 2 \frac{\partial^2}{\partial s\partial t}\big(L(w_{\mathcal D}+s_1u_{\mathcal D})+(\lambda _w+s_1\lambda_u+s_2)^2L(K_x)\big)|_{(s_1,s_2)=(0,0)}\nonumber\\
&=\lambda_uL(K_x)=\tilde B(u).\label{omegax}
\eal
Let $I_{\bar z}=(-\varepsilon_{\bar z},\varepsilon_{\bar z})$ be an interval such that the map $\phi:I_{\bar z}\times S_{\bar z}\rightarrow\tilde M$, $\phi(t,x)=\psi_t(x)$, where $\psi$ is the flow of $K$,  is a diffeomorphism onto a neighborhood $U_{\bar z}$ of ${\bar z}$ in $\tilde M$. Consider a non vanishing smooth section $W:S_{\bar z}\rightarrow T\tilde M$, such that $W_x\not\in\mathcal T_x$, for all $x\in S_{\bar z}$. Set $Y_{z}=(d\psi_{t})_{x}(W_x)$, with $z=\phi(t,x)$. So $Y$ is a non vanishing smooth vector field in $U_{\bar z}$.
The evaluation $\tilde g_Y$ of the fundamental tensor of $L$ in $Y$ becomes, then, a Lorentzian metric on $U_{\bar z}$ (and, by definition of $Y$, $K$ is a Killing vector field for $\tilde g_Y$). 
In particular,  $\tilde g_{Y_z}(w_1,w_2)=\tilde g_{W_x}(v_1,v_2)$, for all $z\in U_{\bar z}$, $z=\phi(t,x)$ and 
$w_i=(d \phi)_{(t,x)}(v_i)$, $i=1,2$.
Thus, $\phi^*\tilde g_Y$ in $I_{\bar z}\times S_{\bar z}$ is given by
\[\phi^*\tilde g_Y\big((\tau,v),(\tau,v)\big)=-\tilde g_{W_x}(K_x,K_x)\tau^2+2\tilde g_{W_x}(v,K_x)\tau+\tilde g_{W_x}(v,v).\]
for all $(t,x)\in I_{\bar z}\times U_{\bar z}$ and $(\tau,v)\in \R\times T_xS_{\bar z}$.
From \eqref{g0}, \eqref{Lambdax}, \eqref{omegax} we then obtain:
\baln
\phi^*\tilde g_Y\big((\tau,v),(\tau,v)\big)&=-\tilde g_{(W_x)_S}(K_x,K_x)\tau^2+2\tilde g_{(W_x)_S}(v,K_x)\tau+\tilde g_{(W_x)_S}(v,v)\\
&=-\Lambda(x)\tau^2+\omega_x(v)\tau+g_{(W_x)_S}(v,v),
\ealn 
where $-\Lambda(x)=g_{(W_S)_x}(K_x,K_x)=L(K_x)$ and  $\omega$ is the one-form on $S_{\bar z}$ defined by $\omega=\tilde B|_{TS_{\bar z}}$.
Thus, for all $(\tau, v)\in \R\times TS_{\bar z}\setminus 0$,
\[L\circ\phi_x(\tau,v)=\tilde g_{\phi_*(\tau,v)}\big(\phi_*(\tau,v),\phi_*(\tau,v)\big)=
\Lambda \tau^2+\omega(v)\tau+F^2(v),\]
where $F$ is the Finsler metric on $S_{\bar z}$ defined by
$F(v)=\sqrt{\tilde g_v(v,v)}=\sqrt{L(v)}$ and
\[L\circ\phi_x(\tau,0)=\tau^2L(K_x)=-\Lambda(x)\tau^2,
\]
Hence, for all $(\tau, v)\in \R\times TS_{\bar z}$,
\[L\circ\phi_x(\tau,v)=
\Lambda \tau^2+\omega(v)\tau+F^2(v).\]
This  concludes the proof of the implication to the left. 
% observing that at each point $z\in U_{\bar z}$, $z=\phi(t,x)$, where $L(z,\cdot)$ is twice differentiable on $T_z \tilde M$  \tilde M$   
For the last part of the theorem, it is enough to observe that in a standard static splitting $(0,v)\in \mathcal D$, for all $v\in TS_{\bar z}$ since $\omega=0$. On the other hand, if $\mathcal D$ is integrable then we can take an integral manifold $S_{\bar z}$ and  then as in \eqref{Kortho} we get $\tilde g_{W_x}(u,K_x)=0$ for all $u\in T_xS_{\bar z}$ and all $x\in S_{\bar z}$. 
\end{proof}
\section{The optical metrics of a stationary splitting Finsler spacetime}
Generally speaking, by {\em optical metric} is meant a metric tensor that comes into play in the description of the optical geometry of a curved spacetime (see, e.g. \cite{HehObu03}). For static and stationary Lorentzian spacetimes, it usually denotes  a Riemannian metric which is conformal to the one induced by the spacetime metric on the space of orbits of the Killing field \cite{AbCaLa88}. For a  standard stationary Lorentzian manifold $(\R\times M,\tilde g)$, $\tilde g$ given in \eqref{sss}, it becomes the  Riemannian metric on $M$ given  by $\omega/\Lambda \otimes \omega/\Lambda +g/\Lambda$. The role attributed to this metric seems to  come from the static case ($\omega=0$) where the metric $g/\Lambda$ does fully describe its optical geometry,   in the sense that  light rays in $(\R\times M,\tilde g)$ project on geodesics of $(M,g/\Lambda)$).  The same is not true in the more general stationary case where the equation satisfied by the projected curves is the one of a unit positive or negative charged  test particle  moving on the Riemannian manifold $(M, \omega/\Lambda \otimes \omega/\Lambda +g/\Lambda)$ under the action of the magnetic field $B=d(\omega/\Lambda)$ (the positive charge correspond to future-pointing lightlike geodesics, the negative one to past-oriented). This equation (actually, these two equations) can effectively be interpreted as the equation of geodesics, parametrized with  constant velocity w.r.t. $\omega/\Lambda \otimes \omega/\Lambda +g/\Lambda$, of a Finsler metric of Randers type on $M$ and of its reverse metric.

Several results about lightlike and timelike geodesics in the standard stationary spacetime can then be deduced by studying geodesics of such  Finsler metrics \cite{CaJaMa11, CaJaMa10, BilJav08, CaJaMa10a, Cap10,  CaJaMa13}. 
The properties of  these Randers metrics encode also  the causal structure  \cite{CaJaMa11, CJS} and the topological lensing  \cite{CapGerSan12, Werner12} in  a standard stationary Lorentzian spacetime, moreover they give information about its c-boundary \cite{FlHeSa13} and its curvature \cite{Gib09}.

Such correspondence between spacetime geometry and Finsler geometry  has also been extended to more general Lorentzian spacetimes introducing  some generalised Finsler-type structures \cite{CJS2}.
Our aim in this section is to prove that the correspondence still holds for a wide class of stationary splitting Finsler spacetimes $(\R\times M,L)$.

Observe that if $\gamma=(\theta,\sigma)$ is a lightlike curve on $(\R\times  M,L)$, $L$ defined as in \eqref{stationary} then, by definition, it satisfies the equation
\begin{equation*}
0=L(\dot{\gamma})=-\Lambda(\sigma)\dot\theta^2+2B(\dot\sigma)\dot\theta+F^2(\dot\sigma),
\end{equation*}
so
\[
\dot\theta=\frac{B(\dot\sigma)}{\Lambda(\sigma)}\pm\sqrt{\frac{B(\dot\sigma)^2}{\Lambda^2(\sigma)}+\frac{F^2(\dot\sigma)}{\Lambda(\sigma)}}.
\]
Solving the above equation w.r.t. to $\dot\theta$, we get the following non-negative and fiberwise positively homogeneous of degree 1 Lagrangians on $TM$ associated to the stationary splitting Finsler spacetime $(\R\times M,L)$:
\begin{equation}\label{FBFB-}
\begin{split}F_B&=\frac{B}{\Lambda}+\sqrt{\frac{B^2}{\Lambda^2}+\frac{F^2}{\Lambda}},\\ F^{-}_{B}&=-\frac{B}{\Lambda}+\sqrt{\frac{B^2}{\Lambda^2}+\frac{F^2}{\Lambda}}.
\end{split}
\end{equation}
The same assumptions ensuring that $L$ is a Lorentz-Finsler function on $\tilde M=\R\times M$, plus a definite sign of $B$ when it does not reduce to a one-form on $M$, give that $F_B$ and $F^-_B$ are  Finsler metrics on $M$.
\begin{thm}\label{fermatmetrics}
Let $L\colon T\tilde M\to \R$ defined as in \eqref{stationary}. Assume that  \begin{enumerate}
	\item $(\partial^2_{yy}B)_v$ is positive semi-definite (resp.  $(\partial^2_{yy}B)_v$ is negative semi-definite), for all $v\in TM\setminus 0$;
	\item for each $x\in M$ either $B(x,v)\geq 0$ (resp. $B(x,v)\leq 0$) for all $v\in T_xM$  or $B(x,\cdot )$ is linear on $T_xM$;
\end{enumerate} 
then  $F_B$ (resp. $F^{-}_B$) is a Finsler metric on $M$.
\end{thm}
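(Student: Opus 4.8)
The plan is to show that $F_B$ (the case of $F_B^-$ being completely analogous, by replacing $B$ with $-B$) is nonnegative, fiberwise positively homogeneous of degree $1$, smooth away from the zero section, and has a positive definite fiberwise Hessian on $TM\setminus 0$; these are exactly the defining properties of a classical Finsler metric, so the strategy is to verify them one at a time, treating separately the points $x\in M$ where $B(x,\cdot)$ is a genuine one-form and those where $B(x,\cdot)\ge 0$ is a nonlinear Lagrangian with positive semi-definite fiberwise Hessian.

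\emph{First}, I would record the elementary facts. Positive homogeneity of degree $1$ of $F_B=\tfrac{B}{\Lambda}+\sqrt{\tfrac{B^2}{\Lambda^2}+\tfrac{F^2}{\Lambda}}$ is immediate from the homogeneity of $B$ and $F$ and the fact that $\Lambda$ is a function on the base. Nonnegativity is clear: $\sqrt{B^2/\Lambda^2+F^2/\Lambda}\ge |B|/\Lambda\ge -B/\Lambda$ since $\Lambda>0$ and $F^2\ge 0$, whence $F_B\ge 0$; moreover $F_B(v)=0$ forces $F(v)=0$ and $B(v)\le 0$, and since $F$ is a Finsler metric $F(v)=0$ gives $v=0$. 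For smoothness away from $0$, note $F^2\in C^3(TM\setminus 0)$ and $g_v$ is positive definite there, so $B^2/\Lambda^2+F^2/\Lambda$ is $C^3$ and \emph{strictly positive} on $TM\setminus 0$ (again because $F(v)>0$ for $v\ne 0$); hence its square root is $C^3$ there, and $B$ itself is at least $C^3$ on $TM\setminus 0$ by hypothesis on \eqref{stationary}, so $F_B\in C^3(TM\setminus 0)$.

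\emph{The main point} is the positive definiteness of the fiberwise Hessian of $F_B^2$. Here I would reuse the computation already carried out in the paper: consider on $\tilde M=\R\times M$ the Lorentz-Finsler function $L$ of \eqref{stationary}. By hypothesis (1) and the first part of Proposition~\ref{index1} (using $B\ge 0$ and choosing $\tau$ in the relevant range, or Corollary~\ref{oneform} at the points where $B(x,\cdot)$ is linear), $(\partial^2_{\dot z\dot z}L)_w$ has index $1$ on the cone $T^+\tilde M\setminus\mathcal T$, so $(\tilde M,L,T^+\tilde M\setminus\mathcal T)$ is a Finsler spacetime (Remark~\ref{whole}). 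Now observe that, by construction of $F_B$ via solving $L(\dot\theta,\dot\sigma)=0$ for $\dot\theta$, the graph $\{(F_B(v),v):v\in T_xM\setminus 0\}$ is precisely the future lightlike cone of $\tilde g$ at $x$ (the branch selected by the $+$ sign). The indicatrix of the Finsler metric $F_B$ is thus obtained as a conic section of the lightcone of the Lorentzian scalar product $\tilde g_{(\tau,v)}$; concretely, fixing a representative direction one computes $(\partial^2_{yy}F_B^2)_v$ by implicit differentiation of the identity $-\Lambda F_B^2+2BF_B+F^2\equiv 0$ and checks positivity of the resulting quadratic form. The cleanest route is the standard Lorentzian one: a future-pointing lightlike vector field $\ell$ for a Lorentzian metric of index $1$ generates, by restriction of the metric to a transverse screen space together with the null-cone structure, a positive definite form; equivalently, for a Randers-type splitting this is the classical fact that $F_B$ defined by \eqref{FBFB-} has positive definite Hessian whenever $L$ is Lorentz-Finsler with the given index-$1$ property. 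So the key step reduces to: the strong convexity of $F_B$ is equivalent to the index-$1$ condition for $(\partial^2_{\dot z\dot z}L)$ on $T^+\tilde M\setminus\mathcal T$, which we already have.

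\emph{The hard part} will be making the last equivalence precise at points where $B(x,\cdot)$ is nonlinear, since there $L$ is only of class $C^3$ on $T\tilde M\setminus\mathcal T$ and one must be careful that the relevant derivatives of $F_B$ live on $TM\setminus 0$, where the denominators $\sqrt{B^2/\Lambda^2+F^2/\Lambda}$ are under control precisely because $F(v)>0$. I would handle it by differentiating the defining relation $-\Lambda F_B^2+2BF_B+F^2=0$ twice in the fiber direction at a fixed $v\ne 0$: writing $h:=\partial^2_{yy}F_B^2$ and using $\partial_{yy}^2(B F_B)=(\partial^2_{yy}B)F_B+(\partial_yB)\odot(\partial_yF_B)+\tfrac12 B\,h/F_B$-type terms (schematically), one isolates $h$ and shows the quadratic form $\tfrac12(2B-\Lambda F_B)\,h$ equals a manifestly positive combination of $g_v$ (positive definite), $(\partial^2_{yy}B)_v$ (positive semi-definite by (1)), and a perfect-square term coming from $(\partial_yB)_v\odot(\partial_yF_B)_v$; the overall sign then follows because $2B-\Lambda F_B=-\sqrt{\,\cdot\,}\,\Lambda F_B^{\,\prime}$-flavoured quantity is nonzero of a definite sign, using $F_B>\!B/\Lambda\cdot(\text{something})$ and $F>0$. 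Carefully unwinding these signs — and checking the degenerate subcase where $(\partial_yB)_v$ vanishes or where $B(x,\cdot)$ is linear, so that $F_B$ is a genuine Randers metric $\alpha+\beta$ and strong convexity is the classical condition $\|\beta\|_\alpha<1$ — is the only delicate bookkeeping; everything else is formal.
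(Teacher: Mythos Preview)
Your proposal has a genuine gap at the crucial step. You assert that ``the strong convexity of $F_B$ is equivalent to the index-$1$ condition for $(\partial^2_{\dot z\dot z}L)$ on $T^+\tilde M\setminus\mathcal T$, which we already have'' and call this ``the classical fact'', but you never prove it. For a fixed Lorentzian scalar product this is indeed standard; here, however, $\tilde g_{(\tau,v)}$ depends on the direction $(\tau,v)$, so the passage from ``index $1$ at every lightlike vector'' to ``the section $\{\tau=1\}$ of the lightcone is a strongly convex hypersurface'' is a genuine argument, not a tautology. Your fallback sketch via implicit differentiation of $-\Lambda F_B^2+2BF_B+F^2=0$ is too vague to stand as a proof: the sign analysis is hand-waved (``$-\sqrt{\,\cdot\,}\,\Lambda F_B'$-flavoured quantity'', ``$B/\Lambda\cdot(\text{something})$''), and more seriously, hypothesis~(2) never enters your computation. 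Since Proposition~\ref{index1} shows that hypothesis~(1) alone already gives index $1$ on $T^+\tilde M\setminus\mathcal T$, your argument, if correct, would prove the theorem without hypothesis~(2). Either that hypothesis is redundant---which would be worth stating and proving carefully---or your argument is missing the place where it is needed.

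The paper takes a different and more direct route: it introduces the auxiliary function $G=\sqrt{B^2+\Lambda F^2}$, so that $F_B=(B+G)/\Lambda$, and first proves that $G$ itself is a Finsler metric. This is where \emph{both} hypotheses are used: one computes $\tfrac{1}{2}(\partial^2_{yy}G^2)_v=(\partial_yB)_v\otimes(\partial_yB)_v+B(v)\,(\partial^2_{yy}B)_v+\tfrac{\Lambda}{2}(\partial^2_{yy}F^2)_v$, and the middle term is nonnegative precisely when either $B(x,\cdot)$ is linear (so $(\partial^2_{yy}B)_v=0$) or $B(v)\ge 0$ and $(\partial^2_{yy}B)_v\ge 0$. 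Once $G$ is Finsler, the Hessian of $(\Lambda F_B)^2=(B+G)^2$ splits as a rank-one square plus $(B(v)+G(v))\big((\partial^2_{yy}B)_v+(\partial^2_{yy}G)_v\big)$; the latter is positive semi-definite with kernel exactly the radial direction (since that is the kernel of $(\partial^2_{yy}G)_v$ for any Finsler $G$), and a short contradiction argument shows the sum is positive definite. This is the idea you are missing: rather than relating $F_B$ back to the ambient Lorentz--Finsler structure, one recognises $F_B$ as a Randers-type change of the Finsler metric $G/\Lambda$ and checks positivity by hand.
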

\begin{proof}
Let us prove the statement for $F_B$, i.e. in the case when $(\partial^2_{yy}B)_v$ is positive semi-definite and either $B(x,\cdot )$ is non-negative or it  is linear. 
The only non trivial part of the proof is to show that  the fiberwise Hessian of the square of  $F_B$ is positive definite.
Let us define
\beq G=\sqrt{B^2 +\Lambda F^2},\label{Gfinsler}\eeq
so that $F_B=\frac{1}{\Lambda}(B+G)$. Let us equivalently compute the fiberwise Hessian of $\frac 1 2 (\Lambda F_B)^2$ at $v\in TM\setminus 0$:
\bml\frac{\Lambda^2}{2}(\partial^2_{yy}F^2_B)_v=\big((\partial_yB)_v+(\partial_yG)_v)\big)\otimes\big((\partial_yB)_v+(\partial_yG)_v)\big)
\\+(B(v)+G(v))\big((\partial^2_{yy}B)_v+(\partial^2_{yy}G)_v\big). \label{lambdaF}
\eml
Let us now show that $G$ is a Finsler metric on. Clearly, $G$ is non-negative and vanishes only at zero vectors, it is continuous in $TM$ and smooth outside the zero section, moreover it is fiberwise positively homogeneous of degree $1$. It remains to prove that the fiberwise Hessian of $\frac{1}{2}G^2$ is positive definite on $TM$.  
Let us evaluate, 
\[\frac 1 2 (\partial^2_{yy}G^2)_v=(\partial_yB)_v\otimes (\partial_yB)_v+B(v)(\partial^2_{yy}B)_v+\frac{\Lambda}{2}(\partial^2_{yy} F^2)_v.
\]
As $(\partial_yB)_v\otimes (\partial_yB)_v+\frac{\Lambda}{2}(\partial^2_{yy} F^2)_v$ is positive definite, we see that $\frac 1 2 (\partial^2_{yy}G^2)_v$ is positive definite provided that $B$ is linear on $T_{\pi_M(v)}M$ (thus, $(\partial^2_{yy}B)_v=0$ for all $v\in T_{\pi_M(v)}M\setminus 0$) or $B(v)\geq 0$ and $(\partial^2_{yy}B)_v$ is positive semi-definite. 

Since $G$ is a Finsler metric, we know that $(\partial^2_{yy}G)_v$ is positive semi-definite and $(\partial^2_{yy}G)_v(u,u)=0$ if and only if $u=v$.  
Thus, as $B(v)+G(v)\leq 0$ and it vanishes only if $v=0$, from \eqref{lambdaF}, we see   $\frac{\Lambda^2}{2}(\partial^2_{yy}F^2_B)_v$ is positive semi-definite. Let us then assume, by contradiction that there exist $u\in T_{\pi_M(v)} M$, $u\neq 0$,  such that $\frac{\Lambda^2}{2}(\partial^2_{yy}F^2_B)_v(u,u)=0$. This implies that
$(\partial^2_{yy}G)_v(u,u)=0$ and then $u=v$. Hence, by homogeneity, $(\partial^2_{yy}B)_v(v,v)=0$ and \[0=\big((\partial_yB)_v+(\partial_yG)_v)\big)\otimes\big((\partial_yB)_v+(\partial_yG)_v)\big)(v,v)=(B(v)+G(v))^2,\]
which implies that $v=0$, a contradiction.

If $(\partial^2_{yy}B)_v$ is negative semi-definite and $B(x,\cdot )$ is non-positive or linear, we get  that $F_B^-$ is   a Finsler metric on $M$ as above taking into account that 
$F_B^-=\frac{1}{\Lambda}(G-B)$.
\end{proof}
\bere
A part from the case when $B$ is a one-form on $M$, a significant class of maps satisfying the assumptions of Theorem~\ref{fermatmetrics} is given (up to the sign) by Randers variations of  Finsler metrics: $B=\pm (\omega+F_1)$, where $F_1$ is a Finsler metric on $M$ and $\omega$ is a one-form such that $|\omega(v/F_1(v))|<1$, for all $v\in TM\setminus 0$.
\ere

\section{Trivial isocausal static Finsler spacetimes}
The optical metrics have been already introduced in \cite{CapSta16} for  a standard static Finsler spacetime. Since in this case $B=0$, both $F_B$ and $F_B^-$ reduce to the Finsler metric $F/\Lambda$ on $M$.  The causal properties of a standard static Finsler spacetime   can then be described in terms of metric properties of  $F/\Lambda$. Thanks to the metrics $F_B$ and $F_B^-$,  we easily see that the causal properties of  a  stationary splitting Finsler spacetime coincides with the ones of a couple of  standard static Finsler spacetimes and therefore they can be described using $F_B$ and $F^-_B$. Indeed, under the assumptions of Theorem~\ref{fermatmetrics},  we can  consider the   Lorentz-Finsler functions on $\tilde M$ given by
\begin{equation}\label{LBLB-}
L_B(\tau,v):=-\tau^2+F_{B}^2(v), \quad\quad L_{B^-}(\tau,v):=-\tau^2+(F^-_{B})^2(v),
\end{equation}
and the Finsler spacetimes $(\R\times M, L_B,T^+\tilde M\setminus \mathcal T )$,  $(\R\times M, L_{B^-},T^-\tilde M\setminus \mathcal T)$ that we call  \emph{trivial isocausal static Finsler spacetimes} associated to $(\tilde M, L)$. 
Isocausality is a relation between Lorentzian spacetimes  introduced in \cite{GarSen03}. If $V$ and $W$ are spacetimes they are said {\em isocausal} if there exists  two diffeomorphisms $\varphi:V\to W$ and $\psi:W\to V$ such that $\varphi_*$ and $\psi_*$ map future-pointing causal vectors into future pointing- causal vectors. Clearly this notion make sense also for Finsler spacetimes. However, notice that in the Finsler setting  future and past-pointing causal vectors are in general not related by the symmetry $v\mapsto -v$ and so one should consider separately   causal relations involving future and past, provided that both the future and past causal cones are defined.  In our case, we have that $(\tilde M, L, T^+\tilde M\setminus \mathcal T)$ and $(\tilde M, L, T^-\tilde M\setminus \mathcal T)$ are both trivially related (i.e. $\varphi=\psi=i_{\R\times M}$) respectively to $(\R\times M, L_B,T^+\tilde M\setminus \mathcal T )$ and $(\R\times M, L_{B^-},T^-\tilde M\setminus \mathcal T)$ as the following proposition shows. 
\begin{prop}\label{samecausality}
Let $(\tilde M, L)$ be a stationary splitting Finsler spacetime satisfying the assumptions of Theorem~\ref{fermatmetrics}.
%such that, for each $x\in M$, either $B(x,\cdot )\geq 0$ (resp. $B(x,\cdot)\leq 0$ or it is a linear form on $T_xM$.  
Then $(\tau,v)\in T\tilde M$, with $\tau>0$ (resp. $\tau<0$)  is a  causal  vector of $(\tilde M,L)$  if and only if it is a causal, future-pointing (resp. past-pointing)  vector of   $(\tilde M,L_B)$ (resp. $(\tilde M,L_{B^-})$). 
\end{prop}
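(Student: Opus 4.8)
The plan is to reduce the whole statement to a one‑dimensional inequality in the fiber coordinate $\tau$, using that both \eqref{stationary} and \eqref{LBLB-} are quadratic polynomials in $\tau$. Fix $z=(t,x)\in\tilde M$ and $(\tau,v)\in T_z\tilde M$. Since $\Lambda>0$, the sign of $L(\tau,v)=-\Lambda\tau^2+2B(v)\tau+F^2(v)$ is the opposite of the sign of the quadratic $q_v(\tau):=\Lambda\tau^2-2B(v)\tau-F^2(v)$, whose roots I would compute explicitly: they are $\tau_+=\tfrac{B(v)+\sqrt{B(v)^2+\Lambda F^2(v)}}{\Lambda}=F_B(v)\ge 0$ and $\tau_-=\tfrac{B(v)-\sqrt{B(v)^2+\Lambda F^2(v)}}{\Lambda}=-F_B^-(v)\le 0$, with $F_B,F_B^-$ as in \eqref{FBFB-} (the non‑negativity of $F_B$ and $F_B^-$ follows from $\sqrt{B^2+\Lambda F^2}\ge|B|$). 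As the leading coefficient $\Lambda$ is positive, $q_v(\tau)\ge 0$ — i.e. $(\tau,v)$ is causal for $(\tilde M,L)$ — if and only if $\tau\ge F_B(v)$ or $\tau\le -F_B^-(v)$.

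Next I would compare this with $L_B$ and $L_{B^-}$. Because $L_B(\tau,v)=-\tau^2+F_B^2(v)$, a vector $(\tau,v)$ with $\tau>0$ is causal for $(\tilde M,L_B)$ iff $\tau^2\ge F_B^2(v)$, i.e. iff $\tau\ge F_B(v)$; and when $\tau>0$ the alternative branch $\tau\le -F_B^-(v)\le 0$ is impossible, so this is exactly the condition for $(\tilde M,L)$ found above. Symmetrically, $L_{B^-}(\tau,v)=-\tau^2+(F_B^-)^2(v)$ gives, for $\tau<0$, that $(\tau,v)$ is causal for $(\tilde M,L_{B^-})$ iff $-\tau\ge F_B^-(v)$, which coincides with $\tau\le -F_B^-(v)$ — the branch $\tau\ge F_B(v)\ge 0$ being empty for $\tau<0$. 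This step uses that $F_B$ (resp. $F_B^-$) is a genuine Finsler metric, i.e. Theorem~\ref{fermatmetrics}, so that $L_B$ (resp. $L_{B^-}$) is a Lorentz–Finsler function and these causal notions are well posed.

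It remains to pin down the time orientation. The function $L_B=-\tau^2+F_B^2$ is of the form \eqref{stationary} with $\Lambda\equiv 1$ and $B\equiv 0$, so by Remark~\ref{futurepastpoint} a causal vector $(\tau,v)\in T^+\tilde M\setminus\mathcal T$ is future‑pointing precisely when $-\tau<0$, i.e. $\tau>0$, while the vectors $(\tau,0)$ with $\tau>0$ are future‑pointing timelike as well; hence, under the standing constraint $\tau>0$, ``causal for $(\tilde M,L_B)$'' and ``causal and future‑pointing for $(\tilde M,L_B)$'' are the same condition, and combining with the previous paragraph gives the $\tau>0$ half of the proposition. The identical bookkeeping applied to $L_{B^-}$ and to vectors with $\tau<0$ (with the corresponding orientation vector field, $-\partial_t$ in the case $A=T^-\tilde M\setminus\mathcal T$) yields the past‑pointing half, and the two cases together prove the statement.

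The computations are routine; the only points requiring a little care — and these are the places a reader could go wrong — are: (i) correctly selecting which root of $q_v$ survives under the sign constraint on $\tau$ (this is exactly why $F_B$ is the relevant metric for $\tau>0$ and $F_B^-$ for $\tau<0$, and not the other way round); and (ii) the vectors lying on $\mathcal T$, i.e. $v=0$, where the fundamental tensors of $L$, $L_B$, $L_{B^-}$ are not defined and ``causal'' / ``future‑pointing'' must be read through the extension‑by‑continuity convention of Remark~\ref{futurepastpoint}. For such vectors, however, $L(\tau,0)=-\Lambda\tau^2$ and $L_B(\tau,0)=-\tau^2$ (resp. $L_{B^-}(\tau,0)=-\tau^2$) are all negative as soon as $\tau\neq 0$, so the equivalence — and the (future/past)‑pointing character — are immediate there.
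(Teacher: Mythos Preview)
Your proof is correct and follows essentially the same approach as the paper: both arguments reduce the causality condition $L(\tau,v)\le 0$ to the quadratic inequality in $\tau$ with roots $F_B(v)$ and $-F_B^-(v)$, and then invoke Remark~\ref{futurepastpoint} to identify the future/past-pointing causal vectors of the ultra-static spacetimes $(\tilde M,L_B)$ and $(\tilde M,L_{B^-})$. Your version is simply more explicit about why the branch selection works under the sign constraint on $\tau$ and about the boundary case $v=0$, but there is no genuine difference in method.
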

\begin{proof}
%We recall  that a causal, future-pointing (resp. past-pointing) vector in a Finsler spacetime $(\tilde M,L,A)$ is a vector $w$ such that  $L(w)\leq 0$, $w\in A$ and 
%$\tilde g_{w}(w,Y)<0$ (resp. $\tilde g_{w}(w,Y)<0$) 
By definition, a non-zero vector $(\tau,v)\in T\tilde M$ is causal for $(\tilde M, L)$ if and only if $L(\tau,v)\leq 0$ i.e.
$-\Lambda\tau^2+2B(v)\tau+F^2(v)\leq 0$
which is equivalent to
$\tau\ge F_B(v)$ or $\tau\leq-F^{-}_B(v)$. Then the equivalence follows recalling that (see Remark~\ref{futurepastpoint}), a causal, future-pointing (resp. past-pointing) vector in  an ultra-static standard Finsler spacetime $(\tilde M, L_1)$, $L_1(\tau,v)= -\tau^2 +F_1^2(v)$, where  $Y=\partial_t$,  is a non-zero vector $(\tau,v)$ such that $\tau\geq F_1(v)$ (resp. $\tau\leq -F_1(v)$). 
%By definition, a non-zero $(\tau,v)\in T\tilde M$ is causal if and only if $L(\tau,v)\leq 0$ i.e.
%$-\Lambda\tau^2+2B(v)\tau+F^2(v)\leq 0$
%which is equivalent to
%$\tau\ge F_B(v)$ or $\tau\leq-F^{-}_B(v)$.
%Being $F_B$ and $F_{B^-}$ non-negative and equal to  $0$ if and only if  $v=0$, we immediately conclude that  a causal vector $(\tau,v)$ is  in $T^+\tilde M\setminus \mathcal T$ (resp. $T^-\tilde M\setminus \mathcal T$), i.e. $v\neq 0$ and $\tau >0$ (resp. and $\tau<0$) if and only if  it is causal and future-pointing (resp. past-pointing) for $(\tilde M,L_B)$ (resp. $(\tilde M,L_{B^-})$).
\end{proof}
\bere \label{futureinside}
In particular, the statement of Proposition~\ref{samecausality} holds with the words ``timelike'' or  ``lightlike'' replacing ``causal''.
Notice also that the causal future-pointing vectors of $(\tilde M, L, T^+\tilde M\setminus \mathcal T)$ (resp. of $(\tilde M, L, T^-\tilde M\setminus \mathcal T))$ are all and only the causal vectors in $\bar T^+\tilde M:=\cup_{p\in \tilde M}\bar T^+_p\tilde M$   (resp. $\bar T^-\tilde M:=\cup_{p\in \tilde M}\bar T^-_p\tilde M$). In fact,  $(\tau, v)\in T^+\tilde M$ (resp. $(\tau, v)\in T^-\tilde M$) is causal for $(\tilde M, L, T^+\tilde M\setminus \mathcal T)$ (resp. $(\tilde M, L, T^-\tilde M\setminus \mathcal T)$) if and only
$\tau \geq F_B (v)$ (resp. $\tau\leq -F_{B}^-$). As  $F_B(v)=\frac{1}{\Lambda}(B(v)+G(v))>\frac{B(v)}{\Lambda}$ (resp. $-F_B^-(v)=\frac{1}{\Lambda}(B(v)-G(v))<\frac{B(v)}{\Lambda}$), provided that $v\neq 0$, we conclude by recalling Remark~\ref{futurepastpoint}.

When $B$ is a one-form on $M$ both the sets of future and past-pointing vectors of $(\tilde M, L, T\tilde M\setminus \mathcal T)$ are non empty and, being in this case $Y=\partial_t$, they coincide with with the causal vectors belonging respectively in  $\bar T^+\tilde M$ and $\bar T^-\tilde M$.
\ere
\begin{rmk}\
As in Lemma 2.21 in \cite{CapSta16}, the epigraph (resp. hypograph) of the function $\tau=F_B (v)$  (resp. $\tau=-F^-_B (v)$) in $T_p \tilde M$ is connected and convex, for all $p\in \tilde M$, i.e. the set of the future-pointing  causal vectors  of $(\tilde M, L, T^-\tilde M\setminus \mathcal T)$ (resp. $(\tilde M, L, T^-\tilde M\setminus \mathcal T)$) at $p\in \tilde M$ is connected and convex. 
Moreover, for each $c>0$, the set  of the future-pointing timelike vectors $(\tau,v)$ of $(\tilde M, L, T^-\tilde M\setminus \mathcal T)$ (resp. $(\tilde M, L, T^-\tilde M\setminus \mathcal T)$) in $T_{p}\tilde M$ such that  $L(\tau, v)\leq -c\}$ is also connected and strictly convex.
\end{rmk}
Let $p_0\in\tilde M$ and let us denote by $I^+(p_0)$ (resp. $I^-(p_0)$) the subset of $\tilde M$ given by all the points $p\in \tilde M$ such that there exists a timelike future-pointing curve of $(\tilde M,L)$ connecting $p_0$ to $p$ (resp. $p$ to $p_0$). 
From Proposition~\ref{samecausality} and Remark~\ref{futureinside}, it follows that these sets coincide with those  of the corresponding trivial isocausal static Finsler spacetime. Thus from  \cite[Prop. 3.2]{CapSta16} we obtain:
\begin{prop}\label{causality1}
Let $(\tilde M, L, T^+\tilde M\setminus \mathcal T)$  be a stationary splitting Finsler spacetime (thus $(\partial^2_{yy}B)_v$ is positive semi-definite,  for all $v\in TM\setminus 0$) such that, for each $x\in M$, either $B(x,v)\geq 0$, for all $v\in T_xM$,  or $B(x,\cdot )$ is linear on $T_xM$. Then, for all $p_0=(t_0,x_0)\in \tilde M$ we have:
\[I^{+}(p_0)=\bigcup_{r>0}\left(\{t_0+r\}\times B^{+}(x_0,r)\right), \quad I^{-}(p_0)=\bigcup_{r>0}\left(\{t_0-r\}\times B^{-}(x_0,r)\right),\]
where $B^{+}(x_0,r)$ and $B^-(x_0,r)$ denote, respectively, the forward and the backward open ball of centre $x_0$ and radius $r$ (see, e.g. \cite{Chern} for the definitions of these balls) of the Finsler metric $F_B$.  Moreover, $I^{\pm}(p_0)$ are open subsets of $\tilde M$.
\end{prop}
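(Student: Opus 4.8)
The plan is to reduce the statement to the description of the chronological future and past in a standard (indeed ultra-)static Finsler spacetime, for which we may invoke \cite[Prop. 3.2]{CapSta16}, using the \emph{trivial} isocausality recorded in Proposition~\ref{samecausality}. First I would note that the hypotheses of Proposition~\ref{causality1} are precisely those under which Theorem~\ref{fermatmetrics} guarantees that $F_B$ is a genuine Finsler metric on $M$; hence $L_B(\tau,v)=-\tau^2+F_B^2(v)$ is a Lorentz-Finsler function and $(\R\times M, L_B, T^+\tilde M\setminus\mathcal T)$ is an honest standard static Finsler spacetime (with $\Lambda\equiv 1$, so that the associated static Finsler metric is $F_B$ itself).

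The core point is that a timelike future-pointing curve $\gamma=(\theta,\sigma)$ of $(\tilde M, L)$ is the same thing as a timelike future-pointing curve of $(\tilde M, L_B)$. Indeed, by Remark~\ref{futurepastpoint} and Remark~\ref{futureinside} a future-pointing timelike vector of $(\tilde M, L, T^+\tilde M\setminus\mathcal T)$ has strictly positive $t$-component, and for vectors $(\tau,v)$ with $\tau>0$ Proposition~\ref{samecausality} (with ``timelike'' in place of ``causal'', as allowed by Remark~\ref{futureinside}) yields the equivalence ``$(\tau,v)$ is timelike future-pointing for $L$'' $\Longleftrightarrow$ ``$(\tau,v)$ is timelike future-pointing for $L_B$''. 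Applying this pointwise along $\gamma$ shows the two classes of curves coincide, so that $I^+(p_0)$ and $I^-(p_0)$ are the same whether computed with respect to $L$ or to $L_B$.

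Finally I would invoke \cite[Prop. 3.2]{CapSta16} for the ultra-static spacetime $(\R\times M, L_B)$, which gives $I^+(p_0)=\bigcup_{r>0}\left(\{t_0+r\}\times B^+(x_0,r)\right)$ and $I^-(p_0)=\bigcup_{r>0}\left(\{t_0-r\}\times B^-(x_0,r)\right)$, the balls being those of $F_B$, together with their openness. Alternatively one can give the computation directly: a timelike future-pointing curve from $(t_0,x_0)$ to $(t_1,x_1)$ satisfies $\dot\theta>F_B(\dot\sigma)$ pointwise, whence $t_1-t_0>\int F_B(\dot\sigma)\,ds\ge d_{F_B}(x_0,x_1)$; conversely, whenever $t_1-t_0>d_{F_B}(x_0,x_1)$ one joins the points by running along an almost-minimizing curve of $F_B$ with a suitable reparametrization of $\theta$, and openness of $I^\pm(p_0)$ follows from openness of the forward and backward Finsler balls.

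The only real work here is bookkeeping: one must check that the equivalence of causal characters at the level of tangent vectors (Proposition~\ref{samecausality}) transfers verbatim to curves, and hence to $I^\pm(p_0)$, and that every hypothesis of Theorem~\ref{fermatmetrics} and of \cite[Prop. 3.2]{CapSta16} is indeed available in the present setting. Beyond this there is no analytic obstacle, the substance being already contained in those two results.
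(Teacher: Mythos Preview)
Your proposal is correct and follows exactly the approach of the paper: the authors also deduce the result directly from Proposition~\ref{samecausality} and Remark~\ref{futureinside} (identifying the chronological future and past of $(\tilde M,L)$ with those of the associated static spacetime $(\tilde M,L_B)$) and then invoke \cite[Prop.~3.2]{CapSta16}. Your write-up is more explicit about the bookkeeping than the paper's one-line justification, but the substance is identical.
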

\bere Taking into account that, in $(\tilde M, L, T^-\tilde M\setminus \mathcal T)$, under the assumptions $(\partial^2_{yy}B)_v$ is negative  semi-definite,  for all $v\in TM\setminus 0$ and, for each $x\in M$, either $B(x,v)\leq 0$, for all $v\in T_xM$,  or $B(x,\cdot )$ is linear on $T_xM$,  a timelike future-pointing vector  is past-pointing for the standard static Finsler spacetime $(\tilde M, L_{B^-})$, an analogous proposition holds  for  a stationary splitting spacetime of the type $(\tilde M, L, T^-\tilde M\setminus \mathcal T)$ by considering the forward and 
backward ball of the metric $F^-_{B}$ and replacing $t_0+r$ with $t_0-r$ in the first equality and $t_0-r$ with $t_0+r$ in the second one.
\ere
Analogously, using also the Fermat's principle  (see Appendix~\ref{fermatprinc}), from the results in Section 3 of \cite{CapSta16}, we get the following proposition which extends to stationary splitting Finsler spacetimes some results obtained in \cite{CJS} for  standard stationary Lorentzian spacetimes (we refer to \cite{CapSta16} and to  \cite{Minguzzi Sanchez} for the definitions of the causality properties involved in its statement, while for the notions of forward and backward completeness of a Finsler metric we refer to \cite{Chern}).
\begin{prop}\label{causality2} 
Under the assumptions of Theorem~\ref{fermatmetrics}, let $(\tilde M, L,T^+\tilde M\setminus \mathcal T)$ (resp. $(\tilde M, L,T^-\tilde M\setminus \mathcal T)$) be a stationary splitting Finsler spacetime.  Then the following propositions hold true:
\begin{enumerate}
	\item $(\tilde M, L,T^+\tilde M\setminus \mathcal T)$ (resp. $(\tilde M, L,T^-\tilde M\setminus \mathcal T)$) is causally simple if and only if for any $x,y\in M\times M$ there exists a geodesic of $F_B$ (resp. $F^-_B$) joining $x$ to $y$, with length equal to the distance associated to $F_B$ (resp.  $F_B^-$);
	\item a slice (and then any slice) $S_t=\{t\}\times M$ is a Cauchy hypersurface  if and only the Finsler manifold $(M,F_B)$ (resp. $(M,F^-_B)$) is forward and backward complete;
	\item $(\tilde M, L,T^+\tilde M\setminus \mathcal T)$ (resp. $(\tilde M, L,T^-\tilde M\setminus \mathcal T)$) is globally hyperbolic if and only if $ \bar B^{+}(x,r)\cap \bar B^{-}(y,s)$ is compact, for every $x,y\in M$ and $r,s>0$, where $\bar B^{\pm}(x_0,r_0)$ are the  closure of the forward and backward balls on $M$ associated to  metric $F_B$ (resp. $F_B^-$).
	\end{enumerate}
\end{prop}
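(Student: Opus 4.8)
\emph{Strategy.} The plan is to reduce all three statements to the \emph{standard static} case and then quote the corresponding results of \cite{CapSta16}. By Proposition~\ref{samecausality} and Remark~\ref{futureinside}, a vector $(\tau,v)\in T\tilde M$ with $\tau>0$ (resp. $\tau<0$) is future-pointing (resp. past-pointing) causal for $(\tilde M,L,T^+\tilde M\setminus\mathcal T)$ (resp. $(\tilde M,L,T^-\tilde M\setminus\mathcal T)$) if and only if it is future-pointing (resp. past-pointing) causal for the trivial isocausal static Finsler spacetime $(\tilde M,L_B)$ (resp. $(\tilde M,L_{B^-})$). Since the isocausality here is realized by $\varphi=\psi=i_{\R\times M}$, this identity of cones means that the two spacetimes have literally the same future- and past-pointing causal (and timelike) curves, hence the same relations $I^\pm$, $J^\pm$ and the same (in)extendible causal curves. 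As causal simplicity, being a Cauchy hypersurface, and global hyperbolicity depend only on these data together with the topology of $\tilde M$, it suffices to prove $(1)$--$(3)$ for $(\tilde M,L_B)$ (resp. $(\tilde M,L_{B^-})$). The latter is a standard static Finsler spacetime with $\Lambda\equiv 1$ and Finsler metric $F_B$ (resp. $F_B^-$), so its optical metric $F/\Lambda$ reduces to $F_B$ (resp. $F_B^-$).

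\emph{Appeal to the static results.} For the ultra-static spacetime $(\tilde M,L_B)$, $L_B(\tau,v)=-\tau^2+F_B^2(v)$, we invoke Section~3 of \cite{CapSta16}: $(1)$ follows from the characterization there of causal simplicity of a standard static Finsler spacetime in terms of convexity of the optical metric (any two points joined by a minimizing geodesic), which uses the Fermat principle of Appendix~\ref{fermatprinc} to translate future-pointing lightlike geodesics of $(\tilde M,L_B)$ from a point to an orbit $\R\times\{x\}$ into $F_B$-geodesics realizing the distance; $(2)$ is the static statement that a slice $\{t\}\times M$ is Cauchy if and only if $(M,F_B)$ is forward and backward complete, via a Hopf--Rinow type control of the $M$-projection of inextendible causal curves; and $(3)$ follows by computing, through Proposition~\ref{causality1} and by taking closures, the causal diamonds
\[
J^+\big((t_0,x_0)\big)\cap J^-\big((t_1,x_1)\big)=\bigcup_{t_0\le t\le t_1}\{t\}\times\Big(\bar B^+(x_0,t-t_0)\cap \bar B^-(x_1,t_1-t)\Big),
\]
so that $(\tilde M,L_B)$ is globally hyperbolic if and only if all such diamonds are compact, which in turn holds if and only if $\bar B^+(x,r)\cap\bar B^-(y,s)$ is compact for every $x,y\in M$ and $r,s>0$.

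\emph{The $T^-$ case and the main obstacle.} The $T^-$ case is obtained from the $T^+$ case by the same symmetry already used after Proposition~\ref{causality1}: the defining vector field is now $Y=-\partial_t$, ``future'' and ``past'' are interchanged, and the relevant optical metric is $F_B^-$ in place of $F_B$; with this relabeling the argument is verbatim the same. The only point that really requires care, and the main obstacle, is the reduction step itself: one must verify that causal simplicity, the Cauchy property and global hyperbolicity are genuinely invariant under the trivial isocausality $\varphi=\psi=i_{\R\times M}$ (equivalently, that they are determined by the causal cones and the topology of $\tilde M$ alone, and not by the particular Lorentz--Finsler function), and that the theorems of \cite{CapSta16}, which are stated for $A=T\tilde M\setminus\mathcal T$ together with a fixed time orientation, remain applicable after restricting to the chosen half $T^\pm\tilde M\setminus\mathcal T$. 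Both checks are routine but must be made explicit.
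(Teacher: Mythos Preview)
Your proposal is correct and follows essentially the same approach as the paper: the paper does not give a detailed proof but simply states that the proposition follows ``using also the Fermat's principle (see Appendix~\ref{fermatprinc}), from the results in Section~3 of \cite{CapSta16}'' together with the trivial isocausality established in Proposition~\ref{samecausality} and Remark~\ref{futureinside}. Your write-up makes this reduction explicit and is, if anything, more detailed than what the paper itself provides.
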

\section{Conclusions}
In this work, we have introduced a Lorentz-Finsler function, Eq. \eqref{stationary}, on $\R\times M$   which admits a timelike Killing vector field and  can be considered as a natural generalisation of the quadratic form of a  standard stationary Lorentzian metric. We have characterized when a Finsler spacetime with a timelike Killing vector field is locally of the type introduced here. Moreover, we have seen that the  optical geometry of these Finsler spacetimes can be described by (at least) one  of two classical Finsler metrics on $M$, Eq. \eqref{FBFB-},  leading to causal relations between  this class of stationary Finsler spacetimes and $\R\times M$ endowed with two possible static Lorentz-Finsler functions, Eq. \eqref{LBLB-},  corresponding respectively to some sign assumptions on $B$ and its fiberwise Hessian.  These relations hold  also for a standard stationary spacetime, Eq. \eqref{sss}, as its optical  metrics are Finslerian too: $F_B$ is a Randers metric and $F_{B}^{-}$ is its reverse metric (see \cite{CaJaMa11}), showing that isocausality can hold between Lorentzian and Finsler spacetimes as well. 

The  isocausality between  stationary splitting Finsler spacetimes and  static Finsler spacetimes allowed us to deduce some results about the former class from already known ones valid for  the latter \cite{CapSta16}, as shown in Propositions~\ref{causality1} and \ref{causality2}. In particular these results hold whenever the map $B$ reduces to a one form on $M$. Thus, imitating  the modification of the Schwarzschild metric in \cite{LPH},  a  Finslerian perturbation of the Kerr metric  given (in geometric units) as
\[
L(\tau,\dot r,\dot\theta,\dot\varphi):=
-\Big(1 - \frac{2Mr}{\rho^2}+\psi_0(r)\Big)\tau^2-\frac{2Mra\sin^2\theta}{\rho^2}\tau\dot\varphi + F^2(\dot r,\dot\theta,\dot\varphi),\]
where 
\bmln
F(\dot r,\dot\theta,\dot\varphi):=
\Big(\big(\frac{\rho^4}{\Delta^2}+\psi_1(r)\big)\dot r^4
+\big(\rho^4+\psi_2(r)\big)\dot\theta^4 \\+\big((r^2+a^2+\frac{2Ma^2\sin^2\theta}{\rho^2}\big)^2\sin^4\theta+\psi_3(r)\big)\dot\varphi^4  %+\psi_5(r)r^2\dot r^2\big(\sin^2\theta\dot \varphi^2+\dot{\theta}^2\big)
\Big)^{1/4}, 
\emln
$\rho^2:=r^2+a^2\cos^2\theta$,  $\Delta:=r^2-2Mr+a^2$, $(r,\theta, \varphi)$ are spherical coordinates  on $\R^3$ and $(\tau, \dot r, \dot\theta, \dot\varphi)$  the induced ones  on  $T_{p}\R^4$, for each $p=(t,r,\theta,\varphi)\in\R^4$, $M>0$ and $a\neq 0$,
belongs (on the region where $1 - \frac{2Mr}{\rho^2}+\psi_0(r)>0$ and for small enough functions $\psi_i(r)$) to the class of Lorentz-Finsler functions for which our results hold. 
Nevertheless, more general Lorentz-Finsler functions $L$ can be considered by changing the Finsler metric $F$ on $\R^3$. In particular, as $F$ can be taken non-reversible ($F(\dot r,\dot\theta,\dot\varphi)\neq F(-\dot r,-\dot\theta,-\dot\varphi)$), the frame dragging effect,
which emphasizes the bending angle of light rays that propagate in the direction of rotation of the Kerr black hole \cite{amir2, IyeHan09, IyeHan09b}, might be  fine-tuned by a suitable choice of a Finsler metric $F$ depending in a non symmetric way from $\dot\varphi$.

An example of a stationary splitting  Finsler spacetime appeared as a solution of the Finslerian gravitational field equations proposed by S. F. Rutz in \cite{Rutz} as a generalisation of the Einstein field equations in vacuum. It is described by the spherical symmetric Lorentz-Finsler function (compare also with \cite[Eq. (37)]{Bar12}) 
\bmln
L(\tau,\dot r,\dot\theta,\dot\varphi):=-\Big(1 - \frac{2M}{r}\Big)\tau^2+\Big(1 - \frac{2M}{r}\Big)^{-1}\dot r^2+r^2(\dot\theta^2+\sin^2\theta\dot\varphi^2)\\+\epsilon\Big(1 - \frac{2M}{r}\Big)\tau B(\dot r,\dot\theta,\dot\varphi),
\emln
where $B(\dot r,\dot\theta,\dot\varphi):=(\dot\theta^2+\sin^2\theta\dot\varphi^2)^{1/2}$. Observe that when the  parameter $\epsilon$ is equal to $0$, $L$ reduces to the quadratic form associated to the Schwarzschild metric. The map $B$ satisfies the assumption of Theorem~\ref{fermatmetrics} only on the open region defined by $0<\theta<\pi$ and, for each $(r,\theta,\varphi)\in \big((0,+\infty)\setminus\{2M\}\big)\times (0,\pi)\times [0,2\pi]$, on the cone $\dot\theta^2+\sin^2\theta\dot\varphi^2>0$ in $\R^3$, where $B$ admits fiberwise Hessian. This example suggests  that it would be interesting to weaken our setting allowing that $B$ is twice differentiable only on  a cone subset $A_M$ of $TM\setminus 0$, giving rise to  two conic (in the sense of \cite{JavSan14}) optical metrics $F_B$ and $F^-_B$, smooth  only on $A_M$. This can be further generalized starting from a conic  Finsler metric $F$ or to a  Killing vector field $\partial_t$ which is not timelike everywhere \cite{CJS2}. However, the study of causality, geodesics and  Fermat's principle  in the corresponding  Lorentz-Finsler  spacetime would be much more delicate than the case where $F$ is a standard Finsler metric on $M$.
\section*{Acknowledgements}
We are grateful to the referees for their valuable comments. 
We would also like to thank Miguel Angel Javaloyes for some fruitful conversations about Killing fields for Finsler metrics and Miguel S\'anchez for suggesting to study the case when the map $B$ is more general  than a one-form.
\appendix
\section{Geodesics}
The fact that the Lorentz-Finsler function of a Finsler spacetime is  regular  (in the sense of the calculus of variations) only on the cone subset  $A\subset T\tilde M$ poses some problems for the definition of geodesics. 
A shortcut leading to the geodesics equation as the Euler-Lagrange equation of the energy functional $E=\gamma\mapsto \frac 12 \int_a^b L(\dot{\gamma})d s$ consists in considering  only smooth curves $\gamma\colon[a,b]\to\tilde M$, with velocity  vector  field $\dot{\gamma}(s)\in A$ for all $s\in[a,b]$) and smooth variations (recall that  $A$ is open, thus for small values of the  parameter  of the variation the ``longitudinal'' curves have velocity vectors in $A$).  Then we can  give the following:
\begin{dfn}\label{geodef}
Let $(\tilde M, L, A)$ be a Finsler spacetime. A  smooth curve $\gamma\colon[a,b]\to\tilde M$ such $\dot\gamma(s)\in A$ for all $s\in [a,b]$ is  a (affinely parametrized) {\em geodesic}  if in local natural coordinates $(x^0,\ldots, x^n,y^0,\ldots, y^n)$ of $T\tilde M$ it satisfies the equations 
\beq\label{E-L}\frac{\partial L}{\partial x^i }\big(\gamma(s), \dot\gamma(s)\big)=\frac{d}{ds}\left(\frac{\partial L}{\partial y^i}\big (\gamma(s), \dot\gamma(s)\big)\right),\quad i=0,\ldots,n.\eeq
\end{dfn}
Since $\frac{\partial^2 L}{\partial y^i\partial y^j}(x,y)$ is non-degenerate for all $x\in \tilde M$ and $y\in A_x$, \eqref{E-L} can be put in normal form (as a second order ODE) and then we have also that for each initial condition $(x_0,y_0)\in A$ there exists one and only geodesic defined in a neighborhood of $0\in\R$ and such that $\gamma(0)=x_0$, $\dot\gamma (0)=y_0$.

Moreover, from $s$-independence of the Lagrangian $L$ and the fact that it is positively homogeneous of degree $2$, we know that  there exists a constant $C_\gamma\in\R$ such that  $L(\dot\gamma(s))=(\partial_y L)_{\dot\gamma(s)}(\dot\gamma(s))-L(\dot\gamma(s))=C_\gamma$. Hence geodesics in Finsler spacetime have a well defined causal character: they are timelike, lightlike or spacelike according to $C_\gamma<0$, $C_\gamma=0$, $C_\gamma>0$.

However, in many geometrical and analytical problems, considering only smooth curves is not optimal being $H^1$  or ``piecewise smooth''  curves more  convenient.  In general, a key ingredient in order to prove that a $H^1$ or a piecewise smooth critical point of the energy is  smooth  is the injectivity of the Legendre map $v\in TM \mapsto (\partial_{\dot z}L)_v\in T^*M$.  Injectivity  could  be proved  for timelike or lightlike vectors ($L(v)\leq 0$, $v\in A$) adapting  \cite[Theorem 5]{Minguzzi}, being then  enough to prove regularity of lightlike of timelike geodesics but not for spacelike ones ($L(v)>0$, $v\in A$).   We observe that in a stationary splitting Finsler spacetime, if $B$ is not a one-form,  the Legendre map is not defined on $\mathcal T$ and a proof of its  global injectivity  following  \cite[Theorem 6]{Minguzzi} doesn't seem to extend immediately  to our setting due both to the possible degeneration of $\tilde g$ outside $A$ and the lack of compactness of $S_x:= (T_x\tilde M\setminus \mathcal T_x)\cap \mathbb S^{n}$, where $\mathbb S^n$ is the sphere in $T_x \tilde M$, $x\in \tilde M$. 
Nevertheless, we would like to write down the geodesics equation taking into account  the splitting  $\tilde M=\R\times M$. Let us then introduce the following setting.

Let $\tilde M=\R\times M$ and $(\tilde M, L)$ be a stationary splitting spacetime. Let $x_0, x_1\in  M$ and $\Omega^r_{x_0x_1}(M)$ be the set of the continuous,  piecewise smooth, regular curves  $\sigma$ on $M$, parametrized on a given  interval $[a,b]\subset \R$ and connecting $x_0$ to $x_1$ (i.e. $\sigma(a)=x_0$, $\sigma(b)=x_1$). By {\em regular} here we mean that the left and right derivatives $\dot\sigma^{\pm}(s)$ are different from $0$ for all $s\in [a,b]$. 
%Let $t_0, t_1\in \R$, with $t_1>t_0$ (resp. $t_1<t_0$). Let us denote by $\mathcal M^+_{t_0t_1}(\R)$ (resp. $\mathcal M^-_{t_0t_1}(\R)$) the set of continuous, piecewise smooth functions $\theta:[a,b]\to \R$ such that $\dot \theta^{\pm}>0$, for all $s\in[a,b]$,  $\theta(a)=t_0$, $\theta(b)=t_1$. 
%For $p,q\in \tilde M$, $p=(x_0,t_0)$, $q=(x_1,t_1)$, $t_1>t_0$ (resp. $t_1<t_0$), let us set
%\[\mathcal A^+_{pq}(\tilde M):= \mathcal M^+_{t_0t_1}(\R)\times \Omega_{x_0x_1}(M), \quad \left(\text{resp. }\mathcal A^-_{pq}(\tilde M):= \mathcal M^-_{t_0t_1}(\R)\times \Omega_{x_0x_1}(M)\right).\]
%
Let $t_0, t_1\in\R$ and let $\Omega_{t_0t_1}(\R)$ be the  set of the continuous,  piecewise smooth functions $t$ defined on $[a,b]$ such that  $t(a)=t_0$ and $t(b)=t_1$. For $p_0,p_1\in \tilde M$, $p_0=(x_0,t_0)$, $p_1=(x_1,t_1)$, let us set 
\[\Omega^r_{p_0p_1}(\tilde M):=\Omega_{t_0t_1}(\R)\times\Omega^r_{x_0x_1}(M).\]
If $\gamma\in\Omega^r_{p_0p_1}(\tilde M)$, we call a {\em (proper) variation} of $\gamma$ a continuous two-parameter map $\psi\colon (-\varepsilon,\varepsilon)\times [a,b]\to \tilde M$ such that $\psi(0,s)=\gamma(s)$, for all $s\in [a,b]$, $\psi(r,\cdot)$ is a continuous curve between  $p_0$ and $p_1$, for all $r\in (-\varepsilon,\varepsilon)$, and  there exists a subdivision $a=s_0<s_1<\ldots, s_k=b$ of the interval $[a,b]$ for which  $\psi|_{(-\varepsilon,\varepsilon)\times[s_{j-1},s_j]}$ is smooth for all $j\in \{1,\ldots,k\}$.
Clearly, we can define classes of proper variations of $\gamma$ as those sharing the same  {\em variational vector field} $Z$. This is, by definition,  a continuous piecewise smooth vector field along $\gamma$ such that $Z(a)=0=Z(b)$ and $Z(s)=\frac{\partial \psi}{\partial r}(0,s)$. By considering any auxiliary Riemannian metric $h$ on $\tilde M$, we see that   each variational vector field $Z$ along $\gamma$ individuates a variation (and then also a class of them) by setting $\psi(w,s):=\exp_{\gamma(s)}(wZ(s))$, for $|w|<\varepsilon$ small enough.
 Moreover, as $\sigma$ is regular and $\psi|_{(-\varepsilon,\varepsilon)\times[s_{j-1},s_j]}$ is  smooth, we can assume, up to consider a smaller $\varepsilon$, that all the curves $\psi(r,\cdot)$ have component on $M$ which is a regular curve. 
Of course, if $\gamma$ is a geodesic of the stationary splitting spacetime $(\tilde M, L)$ then it is a critical point of $E$ in $\Omega^r_{p_0p_1}(\tilde M)$, i.e. $\frac{d}{dr}(E(\psi(r,\cdot))|_{r=0}=0$, for all proper variations $\psi$ of $\gamma$.  Let us study the converse implication. 
%Henceforth, we will denote with $\tilde B$ the map $B/\Lambda$.
\begin{prop}\label{geosforsplitting}
	Let $(\tilde M, L)$ be a stationary splitting Finsler spacetime and $\gamma\in\Omega^r_{p_0p_1}(\tilde M)$, $\gamma(s)=(\theta(s),\sigma(s))$, be a critical point of $E$ in $\Omega^r_{p_0p_1}(\tilde M)$ then:
	\begin{enumerate}
		\item there exists a constant $c_\gamma$ such that 
		\begin{equation}\label{Killingconst}
		-\Lambda(\sigma)\dot{\theta}+B(\dot\sigma)=c_\gamma; 
		\end{equation}
		\item assume that $n=\mathrm{dim}(M)\geq 3$,  $B\geq 0$, $(\partial^2_{yy} B)_v$ is positive semi-definite for all $v\in TM\setminus 0$ and  $c_\gamma\leq 0$ (resp. $B\leq 0$, $(\partial^2_{yy} B)_v$ is negative semi-definite for all $v\in TM\setminus 0$ and $c_\gamma\geq 0$) then $\sigma$ is smooth (then from \eqref{Killingconst} also $\theta$ is smooth) and,
			 in local natural coordinates $(x^1,\ldots, x_n,  y^1,\ldots,y^n)$ on $TM$, the following equations are satisfied:
\begin{equation}\label{onM}
-\frac 1 2 \Big(\frac{c_\gamma}{\Lambda(\sigma)}\Big)^2\frac{\partial \Lambda}{\partial x^i}(\sigma)+\frac{\partial H_\gamma}{\partial x^i}(\dot\sigma)-\frac{d}{ds}\left(\frac{\partial H_\gamma}{\partial y^i}(\dot\sigma)\right)=0,\quad i=1,\ldots,n\end{equation}
where  $H_\gamma=-c_\gamma \frac{B}{\Lambda} +\frac 1 2 \left(\frac{B^2}{\Lambda}+F^2\right)$.
	\end{enumerate}
\end{prop}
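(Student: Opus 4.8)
The plan is to exploit that $t$ is a cyclic variable for the $L$ of \eqref{stationary}: first I would derive the conservation law $(1)$ by varying $\theta$ alone, then use it to eliminate $\dot\theta$ and reduce the problem to a variational problem on $M$ whose Lagrangian is (essentially) $H_\gamma$, and finally obtain smoothness of $\sigma$ from the strict fibrewise convexity of that reduced Lagrangian. For $(1)$: since $L$ does not depend on the first coordinate of $\tilde M=\R\times M$, I would take $\eta\colon[a,b]\to\R$ continuous, piecewise smooth, with $\eta(a)=\eta(b)=0$, and note that $s\mapsto(\theta(s)+r\eta(s),\sigma(s))$ is a proper variation of $\gamma$ in $\Omega^r_{p_0p_1}(\tilde M)$ (the space is a product, so $\sigma$ may be frozen). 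Differentiating $E$ gives $0=\int_a^b(-\Lambda(\sigma)\dot\theta+B(\dot\sigma))\dot\eta\,ds$ for all such $\eta$, whence, by the du~Bois-Reymond form of the fundamental lemma applied to the piecewise continuous function $s\mapsto-\Lambda(\sigma(s))\dot\theta(s)+B(\dot\sigma(s))$, this function is a constant $c_\gamma$ (the identity holding at the corners of $\gamma$ for the one-sided derivatives); this is \eqref{Killingconst}.

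For $(2)$ I would first reduce to $M$. Substituting $\dot\theta=(B(\dot\sigma)-c_\gamma)/\Lambda(\sigma)$ from \eqref{Killingconst}, an elementary computation gives, along $\gamma$, that $\tfrac12 L(\dot\gamma)=\mathcal R_{c_\gamma}(\sigma,\dot\sigma)+c_\gamma\dot\theta$, where
\[
\mathcal R_{c_\gamma}:=\frac{(B-c_\gamma)^2}{2\Lambda}+\tfrac12 F^2=H_\gamma+\frac{c_\gamma^2}{2\Lambda},\qquad
H_\gamma=-c_\gamma\frac{B}{\Lambda}+\tfrac12\Big(\frac{B^2}{\Lambda}+F^2\Big),
\]
which motivates taking $\mathcal R_{c_\gamma}$ as the reduced Lagrangian. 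The key claim is then that $\sigma$ is a critical point of $\mathcal J_{c_\gamma}(\sigma):=\int_a^b\mathcal R_{c_\gamma}(\sigma,\dot\sigma)\,ds$ in $\Omega^r_{x_0x_1}(M)$. To prove it, given a proper variation $\sigma_r$ of $\sigma$ in $\Omega^r_{x_0x_1}(M)$ and keeping $\theta$ (hence $\dot\theta$) fixed, $(\theta,\sigma_r)$ is a proper variation of $\gamma$ in $\Omega^r_{p_0p_1}(\tilde M)$; I would differentiate $E(\theta,\sigma_r)$ at $r=0$, write out the integrand, and then substitute the frozen $\dot\theta$ by $(B(\dot\sigma)-c_\gamma)/\Lambda(\sigma)$ — legitimate because the fixed $\theta$ is the one of $\gamma$ — checking term by term that $\frac{d}{dr}E(\theta,\sigma_r)|_0=\frac{d}{dr}\mathcal J_{c_\gamma}(\sigma_r)|_0$, which therefore vanishes.

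It remains to prove $\sigma$ is smooth and to read off \eqref{onM}. A direct computation gives, for $v\in TM\setminus0$,
\[
(\partial^2_{yy}\mathcal R_{c_\gamma})_v=\frac{1}{\Lambda}(\partial_yB)_v\otimes(\partial_yB)_v+\frac{B(v)-c_\gamma}{\Lambda}(\partial^2_{yy}B)_v+g_v ,
\]
so under $B\ge0$, $c_\gamma\le0$ and $(\partial^2_{yy}B)_v\ge0$ (resp. the reversed signs) the first two summands are positive semi-definite and $(\partial^2_{yy}\mathcal R_{c_\gamma})_v$ is positive definite on all of $TM\setminus0$; equivalently, $\mathcal R_{c_\gamma}(x,\cdot)$ is fibrewise strictly convex, being the sum of $G^2/(2\Lambda)$ (with $G=\sqrt{B^2+\Lambda F^2}$ a Finsler metric by the proof of Theorem~\ref{fermatmetrics}), the nonnegative convex term $-c_\gamma B/\Lambda$, and the fibrewise constant $c_\gamma^2/(2\Lambda)$. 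Hence the reduced Legendre map $v\mapsto(\partial_y\mathcal R_{c_\gamma})_v$ is injective on each $T_xM\setminus0$; since $\sigma$ is regular, the Weierstrass--Erdmann corner conditions for $\mathcal J_{c_\gamma}$ then force $\dot\sigma^-(s)=\dot\sigma^+(s)$ at every would-be corner, so $\sigma\in C^1$, and a bootstrap with the normal-form Euler--Lagrange equation of $\mathcal R_{c_\gamma}$ (its fibrewise Hessian being invertible along $\dot\sigma\neq0$) makes $\sigma$ smooth, and then $\theta$ smooth by \eqref{Killingconst}. Finally, as $\mathcal R_{c_\gamma}=H_\gamma+c_\gamma^2/(2\Lambda)$ and the extra term is independent of $y$ with $\partial_{x^i}(c_\gamma^2/(2\Lambda))=-\tfrac12(c_\gamma/\Lambda)^2\partial_{x^i}\Lambda$, the Euler--Lagrange equation $\partial_{x^i}\mathcal R_{c_\gamma}-\frac{d}{ds}\partial_{y^i}\mathcal R_{c_\gamma}=0$ is exactly \eqref{onM}. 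The routine points are the conservation law, the reduction, and the Hessian identity; the delicate step is the regularity of $\sigma$ — ruling out corners — for which the sign hypotheses (yielding strict fibrewise convexity, hence injectivity of the reduced Legendre map, in the spirit of \cite{Minguzzi}) and the dimensional restriction $n\ge3$ (entering the regularity argument as in \cite{CapSta16}) are precisely what is needed; once $\sigma\in C^1$, the bootstrap is standard.
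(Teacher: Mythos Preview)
Your argument is correct and follows essentially the same path as the paper: derive the conservation law by varying only the $\R$-component, reduce to an Euler--Lagrange problem on $M$ via spatial variations, and remove corners through a Weierstrass--Erdmann argument based on injectivity of the fibrewise Legendre map of the reduced Lagrangian, followed by the usual normal-form bootstrap. The paper carries this out slightly more directly (it derives \eqref{onM} on each smooth subinterval from spatial variations of $E$ without naming the reduced functional $\mathcal J_{c_\gamma}$), but the content is the same; your explicit identification $\mathcal R_{c_\gamma}=H_\gamma+c_\gamma^2/(2\Lambda)$ and the check that $\tfrac{d}{dr}E(\theta,\sigma_r)|_0=\tfrac{d}{dr}\mathcal J_{c_\gamma}(\sigma_r)|_0$ are a clean way to package the reduction.

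One point worth noting: your closing remark that $n\ge 3$ is ``precisely what is needed'' for the regularity step does not match your own argument. You obtain injectivity of $v\mapsto(\partial_y\mathcal R_{c_\gamma})_v$ from global strict convexity of $\mathcal R_{c_\gamma}(x,\cdot)$ on $T_xM$ (which indeed holds: under the sign hypotheses $B(x,\cdot)$ is convex and nonnegative, $G^2/(2\Lambda)$ is strictly convex since $G$ is a genuine Finsler metric, and the remaining term is fibre-constant). This convexity argument uses no dimensional restriction. The paper, by contrast, invokes Lemma~\ref{legendre}, whose second part establishes that $v\mapsto(\partial_y H_\alpha)_v$ is a \emph{diffeomorphism} of $TM\setminus 0$ onto $T^*M\setminus 0$ by a properness/covering argument requiring simple connectedness of $T^*_xM\setminus\{0\}$, hence $n\ge 3$. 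Since only injectivity is needed to rule out corners, your route is actually a bit more economical here; you should either drop the claim that $n\ge 3$ enters your regularity argument, or make explicit where it does.
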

\bere\label{causalgeo}
In a stationary splitting Finsler spacetime Eqs. \eqref{Killingconst} and \eqref{onM} are  equivalent to \eqref{E-L} if we consider  smooth  curves $\gamma=(\theta,\sigma)$, with regular components  $\sigma$, and smooth variation. The assumptions $B\geq 0$, $(\partial^2_{yy} B)_v$ positive semi-definite (resp. $B\leq 0$, $(\partial^2_{yy} B)_v$ negative semi-definite) for all $v\in TM\setminus 0$  are compatible with the case where $B$ is not a one-form and $A=T^+\tilde M\setminus \mathcal T$ (resp. $A=T^+\tilde M\setminus \mathcal T$) -- recall Proposition~\ref{index1}.   The additional assumption $c_\gamma\leq 0$ (resp. $c_\gamma\geq 0$) ensures that $\dot{\theta}\geq 0$ (resp. $\dot{\theta}\leq 0$) and therefore, apart from the case when $c_\gamma=0$ (where $\dot\theta(s)$ could vanish at some instants)   $\gamma$ in Proposition~\ref{geosforsplitting} is a geodesic of $(\tilde M, L, A)$.
Recall that from Remark~\ref{futureinside}, any causal curve $\gamma=(\theta, \sigma)$ of $(M,L,T^+\tilde M\setminus\mathcal T)$ (resp. $(M,L,T^-\tilde M\setminus\mathcal T)$) with $\dot\gamma(s)\in T^+\tilde M\setminus\mathcal T$ (resp. $\dot\gamma(s)\in T^-\tilde M\setminus\mathcal T$) satisfies  $-\Lambda(\sigma)\dot{\theta}+B(\dot\sigma)< 0$ (resp. 	$-\Lambda(\sigma)\dot{\theta}+B(\dot\sigma) > 0$) and therefore we have that any causal curve $\gamma\in \Omega^r_{p_0p_1}(\tilde M)$ which is a critical point of $E$, is a geodesic.   
 \ere
Before proving Proposition~\ref{geosforsplitting} we need the following lemma:
\begin{lem}\label{legendre}
	Let  $\alpha\in\R$ and $H_\alpha=-\alpha \frac B \Lambda+\frac 12 \left (\frac{B^2}{\Lambda}+F^2\right)$. If 	
$B\geq 0$, $(\partial^2_{yy} B)_v$ is positive semi-definite, for all $v\in TM\setminus 0$, and  $\alpha \leq 0$ (resp.$B\leq 0$, $(\partial^2_{yy} B)_v$ is negative semi-definite for all $v\in TM\setminus 0$ and  $\alpha \geq 0$), then $(\partial^2_{yy}H)_v$ is positive definite for all $v\in TM\setminus 0$. Moreover, if $n=\mathrm{dim}(M)\geq 3$, 
the map 
\[\mathcal L = v\in TM\setminus 0\mapsto (\partial_y H_\alpha)_v\in T^*M\setminus 0\]
is a diffeomorphism.
\end{lem}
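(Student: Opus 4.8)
The plan is to treat the two assertions separately. For the positive-definiteness of $(\partial^2_{yy}H_\alpha)_v$, I would write $H_\alpha=\frac1\Lambda\bigl(-\alpha B+\tfrac12 G^2\bigr)$ with $G=\sqrt{B^2+\Lambda F^2}$ as in \eqref{Gfinsler}, so that fiberwise differentiation gives
\[
(\partial^2_{yy}H_\alpha)_v=\frac1\Lambda\Bigl(-\alpha\,(\partial^2_{yy}B)_v+\tfrac12\,(\partial^2_{yy}G^2)_v\Bigr).
\]
In the proof of Theorem~\ref{fermatmetrics} it was already shown that, under the present sign hypotheses, $G$ is a Finsler metric, hence $\tfrac12(\partial^2_{yy}G^2)_v$ is positive definite; moreover $-\tfrac\alpha\Lambda(\partial^2_{yy}B)_v$ is positive semi-definite (it is the nonnegative number $-\tfrac\alpha\Lambda$ times a positive semi-definite form in the first case, and $\tfrac\alpha\Lambda\ge0$ times $-(\partial^2_{yy}B)_v\ge0$ in the second). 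A positive definite form plus a positive semi-definite form is positive definite, which proves the first claim.

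For the Legendre map, I would first note that $\mathcal L$ does take values in $T^*M\setminus 0$: by Euler's relation $(\partial_yH_\alpha)_v(v)=-\tfrac\alpha\Lambda B(v)+\tfrac1\Lambda B(v)^2+F^2(v)$ is a sum of nonnegative terms whose last summand is positive for $v\ne0$, so $(\partial_yH_\alpha)_v\ne0$. Since $H_\alpha\in C^3(TM\setminus0)$, $\mathcal L$ is $C^2$ and covers $\mathrm{id}_M$; in natural coordinates its Jacobian is block lower triangular with diagonal blocks the identity and $(\partial^2_{yy}H_\alpha)_v$, which is invertible by the first part, so $\mathcal L$ is a local diffeomorphism.

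The substantive point is global injectivity, and I expect this to be the crux: the difficulty is precisely that the fibers $T_xM\setminus0$ are not convex, so the usual monotonicity of the gradient of a convex function cannot be applied along segments that run through the origin. I would bypass this by extending $H_\alpha$ continuously over the zero section (using $B(0)=F(0)=0$) and checking that the extension $\hat H_\alpha(x,\cdot)=-\tfrac{\alpha}{\Lambda(x)}B(x,\cdot)+\tfrac1{2\Lambda(x)}G(x,\cdot)^2$ is strictly convex on the whole of $T_xM$: a nonnegative, positively $1$-homogeneous function with positive semi-definite fiberwise Hessian off the zero section is convex on $T_xM$ (a standard fact), so $-\tfrac\alpha\Lambda B$ (resp. $\tfrac\alpha\Lambda(-B)$) is convex, while $\tfrac1{2\Lambda}G^2$ is strictly convex since $G$ is a Finsler metric. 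A finite strictly convex function has an injective subdifferential, which at every $v\ne0$ reduces to the singleton $\{(\partial_yH_\alpha)_v\}=\{\mathcal L_x(v)\}$; hence $\mathcal L_x$, and therefore $\mathcal L$, is injective.

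An injective local diffeomorphism is a diffeomorphism onto its open image, which is the assertion (one should keep in mind that for $\alpha<0$ this image is in general a proper open subset of $T^*M\setminus 0$). The hypothesis $n\ge3$ is needed only at this last step: it guarantees that $T^*M\setminus 0$, and the image of $\mathcal L$, are fiberwise simply connected, which makes available the alternative argument that a proper local diffeomorphism onto a connected, simply connected target is a diffeomorphism — and lets one pin down the image precisely.
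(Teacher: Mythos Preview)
Your argument for positive definiteness of $(\partial^2_{yy}H_\alpha)_v$ and for $\mathcal L$ being a local diffeomorphism coincides with the paper's. For the global step the two approaches diverge: the paper argues that $\mathcal L(x,\cdot)$ is a \emph{proper} map $T_xM\setminus\{0\}\to T^*_xM\setminus\{0\}$ (from the homogeneity degrees of $(\partial_yB)_v$ and $(\partial_yG^2)_v$) and then, using that $T^*_xM\setminus\{0\}$ is simply connected for $n\ge 3$, invokes a global-inversion theorem of Ambrosetti--Prodi to conclude that $\mathcal L(x,\cdot)$ is a bijection onto $T^*_xM\setminus\{0\}$. Your route---extending $H_\alpha(x,\cdot)$ continuously to $0$ and showing it is strictly convex on all of $T_xM$, whence the gradient is injective---is more elementary and does not use the hypothesis $n\ge 3$ at all. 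It also leads you to the correct caveat that the image of $\mathcal L$ need not be all of $T^*M\setminus 0$: with $\Lambda=1$, $B=F=|\cdot|$ (Euclidean norm) and $\alpha=-1$ one gets $\mathcal L(v)=v/|v|+2v$, whose image is $\{|\xi|>1\}$, and in this example $\mathcal L(x,\cdot)$ is not even proper into $T^*_xM\setminus\{0\}$, so the paper's argument is vulnerable precisely where yours is not. For the application in Proposition~\ref{geosforsplitting} only injectivity of $\mathcal L$ is used, and your proof delivers that in every dimension.
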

\begin{proof}
Let us prove the statement under the assumptions $B\geq 0$, $(\partial^2_{yy} B)_v$  positive semi-definite,  $\alpha \leq 0$, being the other case analogous.
Observe that if there exists $v\in TM\setminus 0$ such that $(\partial_y H_\alpha)_v=0$ then by homogeneity $0=(\partial_y H_\alpha)_v(v)=-\frac{\alpha}{\Lambda} B(v)+ \frac{1}{\Lambda}B^2(v)+F^2(v)$ hence, being $- \frac{\alpha}{\Lambda}B(v)\geq 0$, it must be $F(v)=0$ and then $v=0$. Thus $\mathcal L$ is a continuous map from $TM\setminus 0$ into $T^*M\setminus 0$. 
Let us observe that $B^2/\Lambda+F^2=G^2/\Lambda$ where $G$ is the map in \eqref{Gfinsler}.
As $(\partial^2_{yy} H)_v= -\frac{\alpha}{\Lambda} (\partial^2_{yy} B)_v + \frac{1}{2\Lambda}(\partial^2_{yy} G^2)_v$ and $G$ is a Finsler metric (recall the proof of Theorem~\ref{fermatmetrics}) we get that $\mathcal L$ is a local diffeomorphism.

Moreover,  since for any $x\in M$  the map $v \in T_xM\setminus \{0\}\mapsto (\partial_{y} B)_v\in T^*_x M$ is positively homogeneous  of degree $0$ and  $v \in T_xM\setminus \{0\}\mapsto (\partial_{y} G^2)_v\in T^*_xM$ is  positively homogeneous  of degree $1$, we get that $\mathcal L(x,\cdot)$ is a proper map from $T_xM\setminus\{0\}$ to $T^*_xM\setminus\{0\}$. Being $n\geq 3$,  $T^*_xM\setminus\{0\}$ is simply connected and by \cite[Th. 1.8]{AmbPro93}, $\mathcal L(x,\cdot)$ is a bijection and therefore $\mathcal L$ is a diffeomorphism  from $TM\setminus 0$ onto $T^*M\setminus 0$.
\end{proof}
\begin{proof}[Proof of Theorem~\ref{geosforsplitting}]
We observe first that 
\beq\label{Lfurba}L(\tau, v)=-\Lambda\left(\tau-\frac{ B(v)}{\Lambda}\right)^2+\frac{B^2(v)}{\Lambda}+F^2(v).
\eeq 
	
(1) 
By considering variational vector fields $Z$ which are of the type $(Y,0)$   we deduce,  by a standard argument,  that \eqref{Killingconst} is  satisfied on $[a,b]$ for some constant $c_\gamma$.

(2) Let $I$ be an interval where $\sigma$ is smooth and $Z$ a variational vector field along $\gamma$ of the type $(0,W)$, with $W$ having compact support in $I$. Then, in local natural coordinates $(x^1,\ldots, x^n, y^1, \ldots, y^n)$ of $T M$, $\sigma$  satisfies, in such interval,  \eqref{onM}.
At the instants $s_j$, $j\in \{0,\ldots,k\}$, where $\dot\sigma$ has a break,  by taking any vector $w_j \in T_{\sigma(s_j)}M$, $j\in\{1,\ldots,k-1\}$, and a  variational vector field $(0,W_j)$, such that $W_j(s_j)=w_j$ and $W\equiv 0$ outside a small neighbourhood of $s_j$, we get, using  that \eqref{onM} is satisfied both in $[s_{j-1}, s_j]$ and $[s_{j}, s_{j+1}]$, 
% \[F(\dot\sigma^-(s_j))\frac{\partial F}{\partial v^i}(\dot\sigma^-(s_j))w_j^i=F(\dot\sigma^+(s_j))\frac{\partial F}{\partial v^i}(\dot\sigma^+(s_j))w_j^i.\]
% Hence, 
\[\frac{\partial H}{\partial y^i}(\dot\sigma^-(s_j))w_j^i=\frac{\partial H}{\partial y^i}(\dot\sigma^+(s_j))w_j^i,\]
hence $(\partial_y H)_{\dot\sigma^-(s_j)}=(\partial_y H)_{\dot\sigma^+(s_j)}$. From Lemma~\ref{legendre}, $\dot\sigma^-(s_j)=\dot\sigma^+(s_j)$. Thus, $\sigma$ is a $C^1$ curve on $[a,b]$ and, from \eqref{Killingconst}, also $\theta$ is a $C^1$ function. By putting \eqref{onM} in normal form  on each interval $I$ where $\sigma$ is smooth (recall again Lemma~\ref{legendre}) we  deduce that also the second derivatives of $\sigma$ must agree at the instants $s_j$ and then both $\sigma$ and $\theta$ are smooth curves.  
\end{proof}
If $B$ reduces to a one-form on $M$ then we can avoid to consider only regular curves on $M$ and  we can drop the assumption about the dimension of $M$. Indeed,  let us now define, for  $x_0, x_1\in  M$,  $\Omega_{x_0x_1}(M)$ as the set of the continuous,  piecewise smooth,  curves  $\sigma$ on $M$, parametrized on a given  interval $[a,b]\subset \R$ and connecting $x_0$ to $x_1$. The analogous space of paths between two points $p_0=(t_0,x_0)$, $p_1=(t_1,x_1)$ in $\tilde M$  is given by
\[\Omega_{p_0p_1}(\tilde M)=\Omega_{t_0t_1}(\R)\times\Omega_{x_0x_1}(M).\]
Taking into account that in this case $B$ is smooth as a map defined on $TM$ and $(\partial_yB)_v$ is equal to $B$ and then it is independent from $v$, we obtain, arguing  as in the proof of \cite[Theorem 2.13]{CapSta16} and using that the Legendre map associated to $F^2$, $v\in TM\mapsto (\partial_yF^2)_v\in T^*M$,  is  a homeomorphism diffeomorphism (whatever the dimension of $M$ is) we get the following:
\begin{prop}\label{Boneform}
Let $(\tilde M,L)$ be a stationary splitting Finsler spacetime where  $B$ is a one form on $M$. 	
A curve $\gamma\in \Omega_{p_0p_1}(\tilde M)$ is a critical point of $E$ in $\Omega_{p_0p_1}(\tilde M)$ if and only if Eqs. \eqref{Killingconst} and \eqref{onM} are satisfied. 
Any critical point $\gamma=(\theta,\sigma)$ of $E$ in $\Omega_{p_0p_1}(\tilde M)$ is 
then at least   $C^1$ on $[a,b]$ and   there exists a constant $C\in \R$ such that $L(\dot\gamma(s))=C$, for all $s\in [a,b]$. 

Moreover,   if $\gamma$ is   non-constant  then: $(a)$ if it is spacelike or lightlike (i.e.  $C\geq 0$) then $\dot\sigma$ never vanishes and $\gamma$ is smooth; $(b)$ if $\sigma$ is  constant equal to $x_0\in M$ on the whole interval $[a,b]$ then  $C<0$, $d\Lambda(x_0)=0$ and $\dot \theta$ is constant too; vice versa, if  $d\Lambda(x_0)=0$ then,  for each $\theta_0\in \R$ and $m\neq 0$, the curve $s\in[a,b]\mapsto (\theta_0+m(s-a), x_0)\in \tilde M$ is a  timelike geodesic.
\end{prop}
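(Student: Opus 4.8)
The plan is to follow, with the modifications forced by the cross term $\omega(v)\tau$, the argument of \cite[Theorem 2.13]{CapSta16}. First note that, since here $B$ is a one-form on $M$, $L\in C^1(T\tilde M)$ and $L$ is smooth on $T\tilde M\setminus\mathcal T$, and along any $\gamma=(\theta,\sigma)\in\Omega_{p_0p_1}(\tilde M)$ one has the rewriting \eqref{Lfurba}. I would compute the first variation of $E$ letting $\theta$ and $\sigma$ vary independently (legitimate because $\Omega_{p_0p_1}(\tilde M)$ is a product). Variations of the type $\delta\gamma=(\xi,0)$, $\xi(a)=\xi(b)=0$, give $\int_a^b\big(-\Lambda(\sigma)\dot\theta+B(\dot\sigma)\big)\dot\xi\,\de s=0$; by the du Bois-Reymond lemma this is equivalent to $-\Lambda(\sigma)\dot\theta+B(\dot\sigma)$ being a global constant $c_\gamma$, that is \eqref{Killingconst}, together with continuity of $\partial_\tau L=-2\Lambda\dot\theta+2B(\dot\sigma)$ at the corners (automatic once \eqref{Killingconst} holds). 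Variations of the type $\delta\gamma=(0,W)$, $W(a)=W(b)=0$, give on each subinterval of smoothness the Euler--Lagrange equation of the time-dependent Lagrangian $\ell(x,y;s):=\tfrac12 L(x,y,\dot\theta(s))$ on $TM$, plus continuity of $\partial_y\ell$ at the corners. Substituting $\dot\theta=(B(\dot\sigma)-c_\gamma)/\Lambda(\sigma)$ from \eqref{Killingconst} and using \eqref{Lfurba} (equivalently, observing that $(\partial_{y^i}\ell)=(\partial_{y^i}H_\gamma)$ along $\gamma$ and that the $s$-dependent part of $\ell$ produces exactly the term $-\tfrac12(c_\gamma/\Lambda)^2\partial_i\Lambda$), a direct computation identifies this Euler--Lagrange equation with \eqref{onM}. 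Reversing the steps (and using the corner conditions on the momenta) gives the converse, so criticality is equivalent to \eqref{Killingconst} and \eqref{onM}.

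Next I would prove the $C^1$-regularity. At a corner $s_j$ write $v^\pm=\dot\sigma^\pm(s_j)$, $w=v^+-v^-$, all at $x=\sigma(s_j)$. Continuity of $\partial_\tau L$ with \eqref{Killingconst} gives $\dot\theta^\pm=(B(v^\pm)-c_\gamma)/\Lambda$; continuity of $(\partial_{y^i}L)=2B_i\dot\theta+(\partial_{y^i}F^2)$ then yields, as covectors at $x$,
\[(\partial_y F^2)_{v^-}-(\partial_y F^2)_{v^+}=\tfrac{2}{\Lambda}B(w)\,B.\]
Contracting with $w$ and using $(\partial_y F^2)_v(v)=2F^2(v)$ gives $\big[(\partial_y F^2)_{v^-}(v^+)+(\partial_y F^2)_{v^+}(v^-)\big]-2F^2(v^+)-2F^2(v^-)=\tfrac{2}{\Lambda}(B(w))^2\ge 0$, while the fundamental inequality $g_v(v,u)\le F(v)F(u)$ of the Finsler metric $F$ (equivalently, injectivity of the $F^2$-Legendre map, valid in any dimension) bounds the left-hand side above by $-2\big(F(v^+)-F(v^-)\big)^2\le 0$. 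Hence $B(w)=0$, $F(v^+)=F(v^-)$, and both fundamental inequalities are equalities, which forces $v^+=v^-$ (the equality case makes $v^+,v^-$ positive multiples of each other, and equal norms make them equal; $v^-=0$ forces $v^+=0$). Thus $\dot\sigma$, and then $\dot\theta$ by \eqref{Killingconst}, is continuous, so $\gamma\in C^1$. Since $L$ is $s$-independent and fibrewise $2$-homogeneous, $L(\dot\gamma)$ is constant on each smooth piece by the usual first-integral argument and continuous across corners by $C^1$-ness, so $L(\dot\gamma)\equiv C$ on $[a,b]$ for some $C\in\R$.

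For the case analysis I would use the identity obtained by inserting \eqref{Killingconst} into \eqref{Lfurba}, namely $L(\dot\gamma)=\big(G^2(\dot\sigma)-c_\gamma^2\big)/\Lambda(\sigma)$ with $G$ the Finsler metric \eqref{Gfinsler}; hence $G^2(\dot\sigma)=\Lambda(\sigma)C+c_\gamma^2$ along $\gamma$. Case $(a)$: if $C\ge 0$ and $\dot\sigma(s_0)=0$, then $\Lambda(\sigma(s_0))C+c_\gamma^2=0$ forces $C=c_\gamma=0$, whence $G(\dot\sigma)\equiv 0$, $\dot\sigma\equiv 0$ and, by \eqref{Killingconst}, $\dot\theta\equiv 0$, contradicting non-constancy; so $\dot\sigma$ never vanishes, $\dot\gamma$ stays in $T\tilde M\setminus\mathcal T$ where $L$ and $H_\gamma$ are smooth with positive definite fibrewise Hessian (recall the proof of Lemma~\ref{legendre}), \eqref{onM} is in normal form, and a bootstrap shows $\sigma$, hence $\theta$, is smooth. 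Case $(b)$: if $\sigma\equiv x_0$ then $0=G^2(0)=\Lambda(x_0)C+c_\gamma^2$, so $C=-c_\gamma^2/\Lambda(x_0)\le 0$; $C=0$ would give $c_\gamma=0$ and $\dot\theta\equiv 0$, i.e. $\gamma$ constant, excluded, so $C<0$ and $\dot\theta=-c_\gamma/\Lambda(x_0)$ is constant; evaluating \eqref{onM} along $\dot\sigma\equiv 0$, where $(\partial_{x^i}H_\gamma)$ vanishes and $(\partial_{y^i}H_\gamma)=-c_\gamma B_i(x_0)/\Lambda(x_0)$ is constant, leaves $(c_\gamma/\Lambda(x_0))^2\,\de\Lambda(x_0)=0$, hence $\de\Lambda(x_0)=0$. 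Conversely, if $\de\Lambda(x_0)=0$, then for $\theta(s)=\theta_0+m(s-a)$, $m\neq 0$, $\sigma\equiv x_0$, equations \eqref{Killingconst} (with $c_\gamma=-m\Lambda(x_0)$) and \eqref{onM} hold, so $\gamma$ is a critical point of $E$, and $L(\dot\gamma)=-m^2\Lambda(x_0)<0$, i.e. a timelike geodesic.

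The step I expect to be the main obstacle is the corner analysis of the second paragraph: one must combine the two \emph{independent} corner conditions — the one coming from the cyclic variable $t$ and the one from the $M$-variations — and exploit the strict convexity of $F^2$ through the fundamental Finsler inequality to exclude genuine breaks of $\dot\sigma$. Everything else is either bookkeeping (the identification of the reduced Euler--Lagrange equation with \eqref{onM}, in the spirit of \cite[Theorem 2.13]{CapSta16}) or a direct consequence of the two conserved quantities $c_\gamma$ and $C$.
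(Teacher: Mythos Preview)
Your proof is correct and follows the same overall strategy the paper sketches (the paper gives no detailed proof, only refers to \cite[Theorem~2.13]{CapSta16} and to the injectivity of the Legendre map of $F^2$ in any dimension). The one place where you take a slightly different path is the corner analysis: you extract the jump condition on $(\partial_y F^2)$ and close it via the fundamental inequality $g_v(v,u)\le F(v)F(u)$ together with its equality case. The paper's hint points to a shorter route: since $B$ is linear, $(\partial_y H_\gamma)_v=-\tfrac{c_\gamma}{\Lambda}B+\tfrac{1}{2\Lambda}(\partial_y G^2)_v$ with the first summand independent of $v$, so the corner condition $(\partial_y H_\gamma)_{v^-}=(\partial_y H_\gamma)_{v^+}$ is exactly $(\partial_y G^2)_{v^-}=(\partial_y G^2)_{v^+}$, and global injectivity of the Legendre map of the Finsler metric $G$ (valid in any dimension, including $v=0$) gives $v^-=v^+$ in one stroke. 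Your argument is equivalent but more hands-on; the $G^2$ route avoids the separate $v^\pm=0$ discussion and the equality analysis in the fundamental inequality. The remaining steps (the identity $G^2(\dot\sigma)=\Lambda(\sigma)C+c_\gamma^2$ and the case analysis for $(a)$ and $(b)$) match what the reference \cite{CapSta16} does in the static case, suitably adapted.
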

\bere
As in the previous  case about $B$, any geodesic of $(\tilde M,L)$ connecting $p_0$ to $p_1$, according to Definition~\ref{geodef}, is a   critical point of $E$ on $\Omega_{p_0p_1}(\tilde M)$. On the other hand, Proposition~\ref{Boneform} allows us to define geodesics also when $\dot\gamma\not\subset A$: for example, we can say that constant curves are geodesics as they satisfy Eqs. \eqref{Killingconst} and \eqref{onM}; analogously,  we can say when a flow line of $\partial_t$ is a geodesic and when a curve whose velocity vector is collinear to $\partial_t$ at some instants is a geodesic. Technically,    these cases are not covered by Definition~\ref{geodef} or by introducing geodesics as auto parallel curves with respect to a Finslerian connection because, whatever this connection is, it cannot be computed at vector $v\in T\tilde M$ where the fiberwise Hessian of $L$ is not defined.
\ere
\section{Fermat's principle}\label{fermatprinc}
Fermat's principle  characterizes  light rays as the critical points of the travel  time. Thanks to an ingenious  adaptation  of the notion of arrival time,  it remains valid in General Relativity   \cite{Kovner90, Perlic90}. Its extension to  Finsler spacetimes has been proved in \cite{perlick06} (see also \cite{torrome} for timelike geodesics).
In this appendix we prove that, under the assumption of Theorem~\ref{fermatmetrics} lightlike geodesics of a stationary splitting Finsler spacetime $(\R\times M,L)$ project on $M$ as pregeodesics of the corresponding Fermat metric. By pregeodesic, it is meant a curve that is an arbitrary reparametrization of a geodesic.
We point out that this result could be also somehow deduced from the Fermat's principle for Finsler spacetime in \cite{perlick06} 
since the    arrival time coincides, in our setting, with the value of the coordinate $t$ on $\R\times M$ at the final point of a lightlike curve and therefore, up to a constant,  with the length of its component w.r.t. one of the Fermat metrics. Anyway, we give here a simple proof which is tailored on the  
splitting structure of $\tilde M$ and gives also a precise information about the parametrization of the projection of the lightlike geodesic.

\begin{prop}
	Under the assumptions of Theorem~\ref{fermatmetrics}, let $(\tilde M, L, T^+\tilde M\setminus \mathcal T)$ (resp. $(\tilde M, L, T^-\tilde M\setminus \mathcal T)$) be a stationary splitting Finsler spacetime. A curve $\gamma:[a,b]\rightarrow\tilde M$, $\gamma=(\theta,\sigma)$, is a lightlike geodesic  if and only if $\sigma$ is a (non-constant) pregeodesic of the Fermat metric $F_B$ (resp. $F^-_B$) parametrized with $G(\dot{\sigma})=-c_\gamma$ (resp. $G(\dot{\sigma})=c_\gamma$), where $G$ is defined in \eqref{Gfinsler}, and $\theta(s)=\theta(a)+\int_{a}^{s}F_{B}(\dot\sigma)d\tau$ (resp. $\theta(s)=\theta(a)-\int_{a}^{s}F^-_{B}(\dot\sigma)d\tau$).
\end{prop}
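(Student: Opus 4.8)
The plan is to reduce the statement to the characterisation of geodesics of a stationary splitting spacetime recorded in Proposition~\ref{geosforsplitting} and Remark~\ref{causalgeo}: a smooth curve $\gamma=(\theta,\sigma)$ whose velocity lies everywhere in $A$---hence with $\sigma$ regular, since then $\dot\gamma\notin\mathcal T$---is a geodesic of $(\tilde M,L)$ if and only if it satisfies \eqref{Killingconst} and \eqref{onM}. Since a lightlike geodesic is by definition smooth with $\dot\gamma\in A$, and, conversely, the curve assembled from a prescribed $\sigma$ and the prescribed $\theta$ will be seen to be smooth with $\dot\gamma\in A$, the whole equivalence amounts to translating \eqref{Killingconst}, \eqref{onM} and the lightlike condition $L(\dot\gamma)=0$ into the asserted conditions on $\sigma$ and $\theta$. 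I would carry this out for $A=T^{+}\tilde M\setminus\mathcal T$, with Fermat metric $F_B$; the case $A=T^{-}\tilde M\setminus\mathcal T$ is identical after replacing $F_B$ by $F^{-}_B$, reversing the sign of $c_\gamma$, and replacing $\theta(s)=\theta(a)+\int_a^sF_B(\dot\sigma)\,d\tau$ by $\theta(s)=\theta(a)-\int_a^sF^{-}_B(\dot\sigma)\,d\tau$.

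First I would dispose of the null condition. The equation $L(\dot\gamma)=-\Lambda(\sigma)\dot\theta^2+2B(\dot\sigma)\dot\theta+F^2(\dot\sigma)=0$ is quadratic in $\dot\theta$ with roots $\tfrac1{\Lambda}(B(\dot\sigma)\pm G(\dot\sigma))$, $G$ as in \eqref{Gfinsler}; since $\dot\gamma\in T^{+}\tilde M\setminus\mathcal T$ forces $\dot\sigma\neq 0$ (so $G(\dot\sigma)>|B(\dot\sigma)|$) and $\dot\theta>0$, only the root $\dot\theta=F_B(\dot\sigma)$ is admissible, which upon integration gives the claimed formula for $\theta$. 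Substituting this into \eqref{Killingconst} gives $c_\gamma=-\Lambda\dot\theta+B(\dot\sigma)=-G(\dot\sigma)$, so that $G(\dot\sigma)$ is constant, equal to $-c_\gamma>0$: this is exactly the asserted parametrisation of $\sigma$, and in particular $\dot\sigma$ never vanishes, so $\sigma$ is non-constant.

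The heart of the argument is to show that, along a curve with $G(\dot\sigma)\equiv-c_\gamma$, equation \eqref{onM} is equivalent to $\sigma$ being an $F_B$-pregeodesic. Using \eqref{Gfinsler} to write $\tfrac{B^2}{\Lambda}+F^2=\tfrac{G^2}{\Lambda}$, and noting that $-\tfrac12(c_\gamma/\Lambda)^2\partial_{x^i}\Lambda=\partial_{x^i}(c_\gamma^2/2\Lambda)$, one sees that \eqref{onM} is precisely the Euler--Lagrange equation of $\int\mathcal H(\sigma,\dot\sigma)\,ds$ for the Lagrangian $\mathcal H(x,y):=\tfrac{c_\gamma^2}{2\Lambda(x)}+H_\gamma(x,y)$. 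A direct computation then gives the identity
\[\mathcal H(x,y)=-c_\gamma\,F_B(x,y)+\frac{(G(x,y)+c_\gamma)^{2}}{2\Lambda(x)}.\]
The correction term is a non-negative function on $TM\setminus 0$, a perfect square, vanishing exactly on the hypersurface $\{G=-c_\gamma\}$; hence it, together with all its first-order fibrewise and base derivatives, vanishes at every point of that hypersurface, and a fortiori along $\sigma$. Therefore, evaluating \eqref{onM} along $\sigma$ collapses it to $-c_\gamma$ times the Euler--Lagrange equation of the length functional $\sigma\mapsto\int_a^bF_B(\dot\sigma)\,ds$; since $c_\gamma\neq 0$ and the Euler--Lagrange equation of a positively $1$-homogeneous length functional is parametrisation-invariant and characterises pregeodesics, \eqref{onM} holds if and only if $\sigma$ is an $F_B$-pregeodesic. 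This proves the implication ``$\Rightarrow$''.

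For the converse one runs the same bookkeeping backwards: starting from a non-constant $F_B$-pregeodesic $\sigma$ parametrised so that $G(\dot\sigma)\equiv-c_\gamma$---which forces $\dot\sigma$ nowhere zero, $c_\gamma<0$, and $\sigma$ smooth---set $\gamma=(\theta,\sigma)$ with $\theta$ the prescribed primitive of $F_B(\dot\sigma)$. Then $\dot\theta=F_B(\dot\sigma)=\tfrac1{\Lambda}(B(\dot\sigma)+G(\dot\sigma))>0$, so $\dot\gamma\in T^{+}\tilde M\setminus\mathcal T=A$ and $\gamma$ is smooth; $L(\dot\gamma)=0$ because $F_B(\dot\sigma)$ is by construction a root of $-\Lambda t^2+2B(\dot\sigma)t+F^2(\dot\sigma)$; \eqref{Killingconst} holds with constant $c_\gamma=-G(\dot\sigma)$; and \eqref{onM} holds by the identity above, since $\sigma$ is an $F_B$-pregeodesic. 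By Remark~\ref{causalgeo}, $\gamma$ then satisfies \eqref{E-L}, i.e.\ it is a lightlike geodesic. The hard part will be establishing the displayed identity and the observation that its correction term is first-order flat along $\{G=-c_\gamma\}$; the remaining steps are routine. A minor technical point to handle with care is that a pregeodesic carrying the ``constant $G$-speed'' parametrisation is genuinely smooth with non-vanishing velocity, so that $\sigma$ stays in $TM\setminus 0$, where $B$, $F$ and $G$ are of class $C^3$ and the variational manipulations are legitimate.
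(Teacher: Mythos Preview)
Your proof is correct and follows the same overall strategy as the paper: reduce to the system \eqref{Killingconst}--\eqref{onM} via Proposition~\ref{geosforsplitting} and Remark~\ref{causalgeo}, extract $\dot\theta=F_B(\dot\sigma)$ and $G(\dot\sigma)=-c_\gamma$ from the lightlike condition together with \eqref{Killingconst}, and then show that \eqref{onM} under the constraint $G(\dot\sigma)=-c_\gamma$ coincides with the pregeodesic equation of $F_B$.

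The only genuine difference lies in how you carry out this last step. The paper rewrites \eqref{onM} in terms of $\tilde G:=G/\sqrt{\Lambda}$ and then, using $\tilde G(\dot\sigma)=-c_\gamma/\sqrt{\Lambda(\sigma)}$, manipulates the expression term by term until it factors as $-c_\gamma$ times the Euler--Lagrange expression of $F_B=B/\Lambda+\tilde G/\sqrt{\Lambda}$. You instead observe that \eqref{onM} is the Euler--Lagrange equation of the Lagrangian $\mathcal H=c_\gamma^2/(2\Lambda)+H_\gamma$, establish the algebraic identity $\mathcal H=-c_\gamma F_B+(G+c_\gamma)^2/(2\Lambda)$, and note that the perfect-square correction contributes nothing to the Euler--Lagrange operator along curves with $G(\dot\sigma)\equiv-c_\gamma$. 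Your packaging is a bit cleaner---the identity is immediate to verify and the vanishing argument is transparent---while the paper's computation is more hands-on but equivalent. Either way the same reduction is achieved.
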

\begin{proof}
From \eqref{Lfurba}, we get that for a lightlike curve
$\big(\dot{\theta}\Lambda-B(\dot{\sigma})\big)^2=G^2(\dot{\sigma})$. Moreover, as  $\gamma$ is lightlike, $\dot{\sigma}(s)\neq 0$ for all $s\in [a,b]$. Therefore, from Remark~\ref{causalgeo} and observing that any curve $(\theta,\sigma)$ such that $\theta(s)=\theta(a)+\int_{a}^{s}F_{B}(\dot\sigma)d\tau$ (resp. $\theta(s)=\theta(a)-\int_{a}^{s}F^-_{B}(\dot\sigma)d\tau$) is lightlike (recall Proposition~\ref{samecausality}) we have that the statement is equivalent to prove that  \eqref{onM} is the equation of geodesics of $(M, F_B)$ (resp. $(M,F_B^-)$) parametrized with $G(\dot{\sigma})=\mathrm{const.}$ (such a constant is equal to $-c_\gamma$ for $(\tilde M, L, T^+\tilde M\setminus \mathcal T)$ and  to $c_\gamma$ for $(\tilde M, L, T^-\tilde M\setminus \mathcal T)$, $c_\gamma$ being negative in the first case and positive in the second one). 
Notice that \eqref{onM} is equivalent to
\bmln
-\frac{c_\gamma^2}{2\Lambda^2(\sigma)}\frac{\partial\Lambda}{\partial x^i}(\sigma)-c_\gamma\frac{\partial}{\partial x^i}\left(\frac{B}{\Lambda}\right)(\dot\sigma)+c_\gamma\frac{d}{ds}\left(\frac{\partial}{\partial y^i}\left(\frac{B}{\Lambda}\right)(\dot\sigma)\right)\\
+\frac 1 2\frac{\partial \tilde G^2}{\partial x^i}(\dot\sigma)-\frac 1 2\frac{d}{ds}\left(\frac{\partial \tilde G^2}{\partial y^i}(\dot\sigma)\right)=0,\quad i=1,\ldots,n
\emln
where $\tilde G^2=G^2/\Lambda$. Hence, if $G(\dot{\sigma})=-c_\gamma$ (the other case $G(\dot{\sigma})=c_\gamma$ is analogous) we get equivalently 
\bmln
-c_\gamma\left(\tilde G(\dot{\sigma})\frac{\partial}{\partial x^i}\left(\frac{1}{\sqrt{\Lambda}}\right)(\sigma)+\frac{1}{\sqrt{\Lambda(\sigma)}}\frac{\partial \tilde G}{\partial x^i}(\dot\sigma)-\frac{d}{ds}\left(\frac{1}{\sqrt{\Lambda(\sigma)}}\frac{\partial \tilde G}{\partial y^i}(\dot\sigma)\right)\right.\\
\quad\quad\left.+\frac{\partial}{\partial x^i}\left(\frac{B}{\Lambda}\right)(\dot\sigma)-\frac{d}{ds}\left(\frac{\partial}{\partial y^i}\left(\frac{B}{\Lambda}\right)(\dot\sigma)\right)\right)=0,\quad i=1,\ldots,n
\emln
which is, up to the constant factor $c_\gamma$, the equation of pregeodesics of
$F_B$, in local natural coordinates on $TM$.   
\end{proof}
\bere
In particular when $B$ is a one-form on $M$, lightlike future-pointing geodesics project to pregeodesics of $F_B$ and past-pointing ones to  pregeodesics of $F_B^-$.   
\ere


\begin{thebibliography}{10}



\bibitem{amir}
A.B. Aazami,  M.A., Javaloyes, 
{\em Penrose's singularity theorem in a Finsler spacetime}, Classical  Quantum Gravity {\bf 33}, 025003 (2016)

\bibitem{amir2}
A.B. Aazami, C.R. Keeton, A.O. Petters,
{\em Lensing by Kerr black holes. II: Analytical study of quasi-equatorial lensing observables},  J. Math. Phys. \textbf{52}, 102501 (2011)


\bibitem{AbCaLa88}
M. Abramowicz, B. Carter, J. Lasota, 
\emph{ Optical reference geometry
for stationary and static dynamics}, General Relativity and Gravitation  \textbf{20}, 1173--1183 (1988)


\bibitem{AmbPro93}
A. Ambrosetti and G. Prodi, {\em A Primer of Nonlinear Analysis}, Cambridge Studies in Advanced Mathematics, Cambridge University Press, Cambridge, 1993

\bibitem{AnInMa93}
P.L. Antonelli, R.S. Ingarden, M. Matsumoto, \emph{The Theory of Sprays and {F}insler Spaces with Applications in {P}hysics and {B}iology}, Fundamental Theories of Physics, Kluwer Academic Publishers Group, Dordrecht, 1993


\bibitem{Chern} 
D. Bao, S.-S.  Chern,  Z. Shen, 
\emph{An Introduction to Riemann-Finsler Geometry}, Graduate Text in Mathematics \textbf{200}, Springer-Verlag, New York, 2000

\bibitem{Bar12}
E. Barletta, S. Dragomir, 
{\em Gravity as a Finslerian metric phenomenon},
Found. Physics 42, 436--453 (2012)


\bibitem{BilJav08}
L. Biliotti and M.A. Javaloyes, {\em $t$-periodic light rays in conformally stationary spacetimes via {F}insler geometry}, Houston J. Math. {\bf 37}, 127--146 (2011)
\bibitem{Bogosl77a}
G.Y. Bogoslovsky, {\em A special-relativistic theory of the locally anisotropic space-time. I: The metric and group of motions of the anisotropie space of events.}, Il Nuovo Cimento B {\bf 40},
99--115 (1977)
\bibitem{Bogosl77b}
G.Y. Bogoslovsky, {\em A special-relativistic theory of the locally anisotropic space-time. II: mechanics and eleetrodynamics in the anisotropic space}, Il Nuovo Cimento B {\bf 40},
116--134 (1977)
\bibitem{Bogosl94}
G.Y. Bogoslovsky, {\em A viable model of locally anisotropic space-time and the Finslerian generalization of the relativity theory},  Fortschritte der Physik {\bf 42}, 143--193 (1994)


\bibitem{Brandt1999}
H.E. Brandt, {\em Finslerian fields in the spacetime tangent bundle}, 
Chaos Solitons and Fractals {\bf 10}, 267--282 (1999)

\bibitem{Cap10}
E. Caponio,	{\em The index of a geodesic in a {R}anders space and some remarks
	about the lack of regularity of the energy functional of a
	{F}insler metric}, Acta Math. Acad. Paedagogicae Nyh\'azi. (N.S.) {\bf 26},
265--274 (2010)


\bibitem{CapGerSan12}
E.~Caponio, A.~Germinario and M.~S{\'a}nchez, {\em Convex regions of
  stationary spacetimes and Randers spaces. Applications to lensing and
  asymptotic flatness}, J. Geom. Anal. {\bf 26}, 791--836 (2016)  

\bibitem{CaJaMa11}
E. Caponio, M.A. Javaloyes, A. Masiello, 
{\em On the energy functional on {F}insler manifolds and applications to stationary spacetimes}, Math. Ann. \textbf{351}, 365--392 (2011)


\bibitem{CaJaMa10}
E. Caponio, M.A. Javaloyes, A. Masiello, 
{\em Finsler geodesics in the presence of a convex function and their applications}, J. Phys. A {\bf 43}, 135207, 15 (2010)

  
\bibitem{CaJaMa10a} 
E. Caponio, M.A. Javaloyes, A. Masiello, 
{\em Morse theory of causal geodesics in a stationary spacetime via {M}orse theory of geodesics of a {F}insler metric}, Ann. Inst. H. Poincar{\'e} Anal. Non Lin{\'e}aire {\bf 27}, 857--876 (2010)

\bibitem{CaJaMa13}
E. Caponio, M.A. Javaloyes, A. Masiello, 
{\em Addendum to ``{M}orse theory of causal geodesics in a stationary spacetime via {M}orse theory of geodesics of a {F}insler metric'' [{A}nn. {I}. {H}. {P}oincar{\'e}---{AN} 27 (3) (2010) 857--876]}, Ann. Inst. H. Poincar{\'e} Anal. Non Lin{\'e}aire {\bf 30}, 961--968 (2013)


\bibitem{CJS} 
E. Caponio, M.A. Javaloyes, M. S\'anchez, 
\emph{On the interplay between Lorentzian Causality and Finsler metrics of Randers type}, Rev. Mat. Iberoamericana \textbf{27}, 919--952 (2011)

\bibitem{CJS2} 
E. Caponio, M.A. Javaloyes, M. S\'anchez, 
{\em Wind Finslerian structures: from Zermelo's navigation to the causality of spacetimes}, \href{http://arxiv.org/abs/1407.5494}{arXiv:1407.5494 [math.DG]} (2015)

\bibitem{CaJaPi10}
E. Caponio, M.A. Javaloyes, P. Piccione
{\em Maslov index in semi-{R}iemannian submersions}, Ann. Global Anal. Geom., \textbf{38}, 57--75 (2010)
  


\bibitem{CapSta16}
E. Caponio, G. Stancarone,
{\em Standard static {F}insler spacetimes},
Int. J. Geom. Methods Mod. Phys., {\bf 13}, 1650040, 25 (2016)
  


\bibitem{Colladay}
D. Colladay, P.  McDonald, 
{\em Classical Lagrangians for momentum dependent Lorentz violation},
Phys. Rev. D {\bf 85}, 044042 (2012)

\bibitem{FoGiMa95}
D. Fortunato, F. Giannoni, A.  Masiello, 
{\em A {F}ermat principle for stationary space-times and applications to light rays}, J. Geom. Phys. {\bf 15}, 159--188 (1996)

\bibitem{FlHeSa13}
J.L. Flores, J. Herrera and M. S{\'a}nchez, 
{\em Gromov, {C}auchy and causal boundaries for {R}iemannian, {F}inslerian and {L}orentzian manifolds},
Mem. Amer. Math. Soc. {\bf 1064}, vi+76 (2013)


\bibitem{FusPab16}
A. Fuster, C.U. Pabst,
{\em Finsler pp-waves}, Phys.Rev. D {\bf 94}, 104072 (2016)
 
\bibitem{torrome}
R. Gallego Torrom\'{e}, P. Piccione, H. Vit\'{o}rio,
{\em On Fermat's principle for causal curves in time oriented Finsler spacetimes},
J. Math. Phys. \textbf{53},  123511 (2012)

\bibitem{GarSen03}
A. Garc\'\i a-Parrado, J.M.M. Senovilla,
{\em Causal relationship: a new tool for the causal characterization of {L}orentzian manifolds},
Classical Quantum Gravity {\bf 20}, 625--664 (2003)

\bibitem{GiaPic99}
F. Giannoni, P. Piccione, {\em An intrinsic approach to the geodesical connectedness of stationary {L}orentzian manifolds}, Commun. Anal. Geom.  {\bf 7},  157--197 (1999)
	

\bibitem{GiGoPo07}
{G.W. Gibbons, J. Gomis, C.N. Pope},
{\em General very special relativity is Finsler geometry},
Phys. Rev. D {\bf 76}, 081701 (2007)


\bibitem{Gib09}
G.W. Gibbons, C.A.R. Herdeiro, C.M. Warnick and M.C. Werner,
{\em Stationary metrics and optical Zermelo-Randers-Finsler geometry}, Phys. Rev. D {\bf 79}, 044022 (2009) 


\bibitem{GiLiSi2007}
F. Girelli, S. Liberati, L. Sindoni, {\em Planck-scale modified dispersion relations and Finsler geometry}, Phys. Rev. D {\bf 75},  064015 (2007),

\bibitem{HehObu03}
F.W. Hehl, Y.N. Obukhov,  {\em Foundations of Classical Electrodynamics. Charge, Flux, and Metric}, Birkh\"auser Boston, Inc., Boston, MA, 2003

	


\bibitem{HohPfe17}
M. Hohmann, C. Pfeifer,
{\em Geodesics and the magnitude-redshift relation on cosmologically symmetric Finsler spacetimes}, Phys. Rev. D {\bf 95}, 104021 (2017)


\bibitem{IyeHan09}
S.V.Iyer, E.C. Hansen, 
{\em Light's bending angle in the equatorial plane of a Kerr black hole},
Phys. Rev. D {\bf 80}, 124023 (2009)
\bibitem{IyeHan09b}
S.V.Iyer, E.C. Hansen,
{\em Strong and weak deflection of light in the equatorial plane of a Kerr black hole}, arXiv:0908.0085 [gr-qc] (2009)

\bibitem{java}
M.A. Javaloyes, {\em Anisotropic tensor calculus},
arXiv:1602.05492v1 [math.DG] (2016)



\bibitem{JavS08}
M.A. Javaloyes, M.  S{\'a}nchez, \emph{A note on the existence of standard splittings for conformally stationary spacetimes}, Classical  Quantum Gravity, \textbf{25}, 168001, 7, (2008).


\bibitem{JavSan14}
M.A. Javaloyes, M.  S{\'a}nchez, \emph{On the definition and examples of Finsler metrics},
Ann. Scuola Norm. Sup. Pisa Cl. Sci. (4) {\bf XIII},  813--858 (2014)
  

\bibitem{Koste}
V.A. Kosteleck\'{y}, 
{\em Riemann-Finsler geometry and Lorentz-violating kinematics},
Phys. Lett. B  {\bf 701}, 137 (2011)

\bibitem{Russell}
V.A. Kosteleck\'{y}, N. Russell, R. Tso,
{\em Bipartite Riemann-Finsler geometry and Lorentz violation},
Phys. Lett. B {\bf 716}, 470 (2012)


\bibitem{KoStSt12}
A.P. Kouretsis, M. Stathakopoulos, P.C. Stavrinos, 
{\em Covariant kinematics and gravitational bounce in Finsler space-times}, Phys. Rev. D {\bf 86}, 124025 (2012)

\bibitem{Kovner90}
I. Kovner, 
{\em Fermat principles for arbitrary space-times}, Astrophys. J. {\bf 351}, 114--120 (1990)
	

\bibitem{LPH}
C. L\"ammerzahl, V. Perlick, W. Hasse,
{\em Observable effects in a class of spherically symmetric static Finsler spacetimes},
Phys. Rev. D {\bf 86}, 104042 (2012)

\bibitem{LiChang} X. Li, Z. Chang, {\em Exact solution of vacuum field equation
in Finsler spacetime}, Phys. Rev. D {\bf 90},  064049 (2014)

\bibitem{Lovas04}
R.L. Lovas, \emph{On the {K}illing vector fields of generalized metrics},
SUT J. Math. {\bf 40}, 133--156 (2004)

\bibitem{mas}
A. Masiello,
{\em Variational Methods in Lorentzian Geometry}.
Pitman Research Notes in Mathematics Series \textbf{309},
Longman, London, 1994



\bibitem{MeSzTo03}
T. Mestdag, J.Szilasi, V. T{\'o}th,
{\em On the geometry of generalized metrics}, Publ. Math. Debrecen {\bf 62}, 511--545 (2003)


\bibitem{Minguzzi} 
E. Minguzzi,  
\emph{Light cones in Finsler spacetime}, Comm. Math. Phys. \textbf{334}, 1529--1551 (2015)

\bibitem{Minguz15b}
E. Minguzzi, 
{\em Raychaudhuri equation and singularity theorems in {F}insler spacetimes},
Classical Quantum Gravity {\bf 32}, 185008, 26 (2015)


\bibitem{Minguzzi Sanchez}
E. Minguzzi, M. S\'anchez, \emph{The Causal Hierarchy of Spacetimes. In Recent developments in pseudo-{R}iemannian geometry.}
Eur. Math. Soc., Z{\"u}rich, 299--358 (2008)

\bibitem{O'Neil}
B. O'Neill, 
\emph{Semi-Riemannian Geometry}, Pure and Applied Mathematics \textbf{103}, Academic Press, Inc., New York, 1983

\bibitem{OoYaIs17}
T. Ootsuka, R.  Yahagi, M. Ishida,
{\em Killing symmetry on the {F}insler manifold},
Classical Quantum Gravity {\bf 34}, 095002, 22 (2017)

\bibitem{papa}
G. Papagiannopoulos, S. Basilakos, A. Paliathanasis, S. Savvidou, P. Stavrinos,
{\em Finsler-Randers Cosmology: dynamical analysis and growth of matter perturbations},
Classical and Quantum Gravity {\bf 34}, 225008 (2017)

\bibitem{Perlic90}
V. Perlick, 
{\em On {F}ermat's principle in {G}eneral {R}elativity. {I}.\ {T}he general case}, Classical Quantum Gravity \textbf{7}, 1319--1331 (1990) 
	



\bibitem{perlick06}
V. Perlick, 
{\em Fermat principle in {F}insler spacetimes}, Gen. Relativity Gravitation {\bf 38}, 365--380 (2006)

\bibitem{PfeWol11}
C. Pfeifer, M.N.R. Wohlfarth,  
{\em Causal structure and electrodynamics on Finsler spacetimes},
Phys. Rev. D  {\bf 84}, 044039 (2011)


\bibitem{Rander41}
G. Randers,  {\em On an asymmetrical metric in the fourspace of {G}eneral {R}elativity}, Phys. Rev. (2) {\bf 59}, 195--199 (1941)



\bibitem{Russel15}
N. Russell,
{\em Finsler-like structures from Lorentz-breaking classical particles},
Phys. Rev. D {\bf 91}, 045008 (2015)

\bibitem{Rutz}
S.F. Rutz, {\em A Finsler generalisation of Einstein's vacuum field equations}, 
General Relativity and Gravitation {\bf 25},  1139-1158 (1993)

\bibitem{shreck}
M. Schreck, 
{\em Classical Lagrangians and Finsler structures for the nonminimal fermion sector of the standard model extension},
Phys. Rev. D {\bf 93}, 105017 (2016)

 

\bibitem{stavac} P. Stavrinos, S. Vacaru, {\em Cyclic and ekpyrotic
universes in modified Finsler osculating gravity on tangent Lorentz bundles},
Classical Quantum Gravity {\bf 30}, 055012 (2013)

\bibitem{Szilas03}
J. Szilasi, \emph{A Setting for Spray and {F}insler Geometry}, Handbook of {F}insler geometry. Kluwer Acad. Publ., Dordrecht, 2003


\bibitem{vacaru12} S. Vacaru, {\em Principles of Einstein-Finsler gravity and
perspectives in modern cosmology}, Int. J. Mod. Phys. D {\bf 21}, 1250072 (2012)


\bibitem{Werner12}
M.C. Werner,
{\em Gravitational lensing in the Kerr-Randers optical geometry},
General Relativity and Gravitation {\bf 44}, 3047--3057 (2012)


%\bibitem{wiki}
%Wikipedia, {\em Optical metric --- Wikipedia, The Free Encyclopedia}, 2016,
%\href{https://en.wikipedia.org/w/index.php?title=Optical_metric&oldid=731768221}{online}, accessed 	20 September 2017
\end{thebibliography}
\end{document}